\def\R{\mathbb R}
\def\N{\mathbb N}
\def\C{\mathbb C}
\def\H{\mathcal H}
\def\L{\mathcal L}
\def\x{\mathbf x}
\def\vol{\mathrm{vol}}
\def\area{\mathrm{area}}
\def\dist{\mathrm{dist}}
\newtheorem*{conj}{Conjecture}
\newtheorem*{thma}{Theorem \ref{main}}
\newtheorem{thm}{Theorem}[section]
\newtheorem{lemm}[thm]{Lemma}
\newtheorem{cor}[thm]{Corollary}
\newtheorem{prop}[thm]{Proposition}
\theoremstyle{remark}
\newtheorem{rmk}[thm]{Remark}
\theoremstyle{definition}
\newtheorem{defi}[thm]{Definition}
\title{Finite Time Singularities for Lagrangian Mean Curvature Flow}
\author{{Andr\'e Neves}}
\email{aneves@imperial.ac.uk}
\address{Department of Mathematics, Imperial College London, London SW7 2AZ}
\begin{document}

\begin{abstract}
	Given any embedded Lagrangian on a four dimensional compact Calabi-Yau, we find another Lagrangian in the same Hamiltonian isotopy class which develops a finite time singularity under mean curvature flow. This contradicts a  weaker version of the Thomas-Yau conjecture regarding long time existence and convergence of Lagrangian mean curvature flow.
\end{abstract}

\maketitle \markboth{Finite Time Singularities for Lagrangian Mean Curvature Flow} {Andr\'e Neves}

\section{Introduction}

One of the hardest open problems regarding the geometry of Calabi-Yau manifolds consists in determining when a given Lagrangian admits a minimal  Lagrangian (SLag) in its homology class or Hamiltonian isotopy class.  If such SLag exists then it is area-minimizing  and so one could  approach this problem by trying to minimize area among all Lagrangians in a given class. Schoen and Wolfson \cite{schoen} studied the minimization problem and showed that, when the real dimension is four, a Lagrangian minimizing area among all Lagrangians in a given class exists,  is smooth everywhere except finitely many points, but not necessarily a minimal surface. Later Wolfson \cite{wolfson} found a  Lagrangian sphere  with nontrivial homology on a given K3 surface for which the Lagrangian which minimizes area among all Lagrangians in this class is not an SLag and the surface which minimizes area  among all surfaces  in this class is not Lagrangian. This shows the subtle nature of the problem.

In another direction, Smoczyk \cite{smo0} observed that when the ambient manifold is K\"ahler-Einstein   the Lagrangian condition is preserved  by the gradient flow of the area functional (mean curvature flow) and so a natural question is whether one can produce SLag's using Lagrangian mean curvature flow. To that end,  R. P. Thomas and S.-T. Yau \cite[Section 7]{thomas} considered this question and proposed a notion of ``stability'' for Lagrangians in a given Calabi-Yau which we now describe.

Let $(M^{2n}, \omega, J,\Omega)$ be a compact Calabi-Yau with metric $g$ where $\Omega$ stands for the unit parallel section of the canonical bundle. Given $L \subseteq M$ Lagrangian, it is a simple exercise (\cite[Section 2]{thomas} for instance)  to see that
$$\Omega_L=e^{i\theta}\vol_L,$$
where $\vol_L$ denotes the volume form of $L$ and $\theta$ is a multivalued function defined on $L$ called the {\em Lagrangian angle}. All the Lagrangians considered will be {\em zero-Maslov class}, meaning that  $\theta$ can be lifted to a  well defined function on $L$. 
 Moreover if $L$ is zero-Maslov class with oscillation of Lagrangian angle less than $\pi$ (called {\em almost-calibrated}),  there is a natural choice for the phase of $\int_L \Omega$, which we denote by $\phi(L)$.  
  Finally, given any two Lagrangians  $L_1, L_2$  it is defined in \cite[Section 3]{thomas} a connected sum operation $L_1\# L_2$ (more involved then a simply topological connected sum). We refer the reader to  \cite[Section 3]{thomas} for the details.

\begin{defi}[Thomas-Yau Flow-Stability] Without loss of generality, suppose that the almost-calibrated Lagrangian $L$  has  $\phi(L)=0$.  Then $L$ is flow-stable if any of the following two happen.
\begin{itemize}
\item $L$  Hamiltonian isotopic to $L_1\# L_2$, where $L_1, L_2$ are two almost-calibrated  Lagrangians, implies that
$$[\phi(L_1), \phi(L_2)]\nsubseteq (\inf_L\theta, \sup_L \theta).$$
\item $L$  Hamiltonian isotopic to $L_1\# L_2$, where $L_1, L_2$ are almost-calibrated  Lagrangians, implies that
$$\area(L) \leq \int_{L_1} e^{-i\phi(L_1)}\Omega+\int_{L_2} e^{-i\phi(L_2)}\Omega.$$
\end{itemize}
\end{defi}

\begin{rmk}
 The notion of flow-stability defined in \cite[Section 7]{thomas} applies to a larger class than almost-calibrated Lagrangians. For simplicity, but also because the author (unfortunately) does not fully understand that larger class, we chose to restrict the definition to almost calibrated.
\end{rmk}

In \cite[Section 7]{thomas} it is then conjectured

\begin{conj}[Thomas-Yau Conjecture] Let $L$ be a flow-stable Lagrangian in a Calabi-Yau manifold. Then the Lagrangian mean curvature flow will exist for all time and converge to the unique SLag in its Hamiltonian isotopy class.
\end{conj}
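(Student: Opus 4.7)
My plan is to attempt to prove long-time existence and convergence by systematically using the two clauses of flow-stability to exclude singularity models and area-decreasing competitors. The argument would split into two stages: first, rule out finite-time singularities using the first clause; second, show that the resulting eternal flow converges smoothly to a Special Lagrangian using the second clause.

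For the first stage, I would argue by contradiction. Suppose a singularity forms at time $T < \infty$. Because $L$ is almost-calibrated, a maximum principle applied to the evolution equation for $\theta$ shows that $L_t$ stays almost-calibrated on $[0,T)$, with $\theta_t$ taking values in $[\inf_L \theta, \sup_L \theta]$ throughout. After parabolically rescaling near a singular point by $(T-t)^{-1/2}$ and passing to a subsequential limit via Huisken's monotonicity formula, one should obtain a tangent flow whose Lagrangian angles are locally constant, each value matching a Special Lagrangian plane of some phase $\phi_i$, and all of these phases should lie in $(\inf_L \theta, \sup_L \theta)$. In the simplest case the tangent flow is a union of two such planes of phases $\phi_1 < \phi_2$, and a surgery/gluing argument reversing the formation of the neck should produce a Hamiltonian isotopy $L \simeq L_1 \# L_2$ with $\phi(L_i) = \phi_i$, contradicting the first bullet of flow-stability. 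More complicated multiplicity or more planes would require an inductive surgery procedure.

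For the second stage, granted long-time existence, monotonicity of area together with interior regularity estimates should provide subsequential smooth convergence along some sequence $t_k \to \infty$ to an SLag $L_\infty$. The second bullet of flow-stability, combined with the area-minimizing property of any SLag in its Hamiltonian isotopy class, should rule out mass concentration or loss of area in the limit, and uniqueness of the area-minimizer in the Hamiltonian isotopy class would upgrade subsequential convergence to full convergence.

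The principal obstacle is clearly the passage from analytic singularity data (tangent flows) to synthetic topological data (the decomposition $L \simeq L_1 \# L_2$ as a connected sum of almost-calibrated Lagrangians). Classifying two-dimensional Lagrangian self-shrinkers that can occur as tangent flows, controlling their multiplicity, and faithfully reconstructing a Hamiltonian isotopy from a neck-pinch picture is exactly where the conjecture becomes delicate. Indeed the abstract of this paper already signals that this step is not merely technically hard but fundamentally problematic: one can construct Lagrangians that look flow-stable in the formal sense above yet still develop finite-time singularities, so the surgery-to-connected-sum dictionary cannot faithfully encode the singular behaviour and the outline above cannot in general be carried through.
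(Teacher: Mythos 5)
The statement you are trying to prove is a \emph{conjecture}, and the paper neither proves nor disproves it. There is no proof in the paper to compare your sketch against; the Thomas--Yau conjecture remains open. What the paper disproves is the \emph{weaker} conjecture due to Mu-Tao Wang, which drops the flow-stability hypothesis and asks only that $\Sigma$ be compact, embedded, and zero-Maslov class.

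Your two-stage heuristic (tangent-flow analysis to rule out singularities via the first clause of flow-stability, then subsequential convergence and area comparison via the second) is the standard intuition behind the conjecture, and you correctly single out the passage from analytic singularity data to a Hamiltonian-isotopy decomposition $L \simeq L_1 \# L_2$ as the essential difficulty. That diagnosis is sound.

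However, your final paragraph misreads the paper in a way that inverts its conclusion. The paper does \emph{not} construct Lagrangians that ``look flow-stable in the formal sense'' and still become singular. Quite the opposite: the Lagrangian $L$ built in Theorem A has Lagrangian angle oscillating over an interval of length arbitrarily close to (and in fact exceeding) $\pi$, so it is not almost-calibrated, and Remark (4) following Theorem A states explicitly that $L$ ``does not qualify to be flow-stable in the sense of Thomas--Yau.'' The abstract says the construction contradicts ``a weaker version'' of the conjecture, and the introduction is careful to attribute that weaker statement to Wang and to explain that it omits the stability hypothesis. So the paper leaves your proposed strategy untouched as far as the actual Thomas--Yau conjecture is concerned; it shows only that one cannot drop the stability hypothesis, and indeed Remark (5) raises as an open problem whether a finite-time singularity can occur with arbitrarily small oscillation of the Lagrangian angle, i.e., in the almost-calibrated regime where your argument would have to operate.
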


The intuitive idea is that if  a singularity occurs it is because the flow is trying to decompose the Lagrangian  into ``simpler'' pieces and so, if we rule out this possibility,  no finite time singularities should occur. Unfortunately, their stability condition  is in general hard to check. For instance, the definition does not seem to be preserved by  Hamiltonian isotopies and so it is a highly  nontrivial statement the existence of Lagrangians which are flow-stable and not SLag. As a result, it becomes quite hard to disprove the conjecture because not many examples of flow-stable Lagrangians are known. For this reason there has been considerable interest in the following simplified version of the above conjecture (see \cite[Section 1.4]{wang}).
\begin{conj} 
Let M be Calabi-Yau and $\Sigma$ be a compact embedded Lagrangian submanifold with zero Maslov class, then the mean curvature flow of $\Sigma$ exists for all time and converges smoothly to a special Lagrangian submanifold in the Hamiltonian isotopy class of $\Sigma$.
\end{conj}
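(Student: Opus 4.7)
The conjecture is in fact false: the paper's main theorem produces, in every Hamiltonian isotopy class of an embedded Lagrangian $L \subset M^4$, a representative $\tilde L$ whose \lmcf\ develops a finite-time singularity. My plan is to build $\tilde L$ by a purely local construction. Pick a point $p \in L$, use a Darboux chart to identify a neighborhood of $p$ symplectomorphically with a ball in $(\C^2, \omega_{\s})$ in which $L$ appears as a patch of $\R^2 \times \{0\}$, and perform a compactly supported Hamiltonian isotopy inside that ball. The modification attaches to $L$ a small Lagrangian ``bump'' --- for instance a Whitney-sphere-type piece, or two transverse Lagrangian planes joined by a thin twisted neck --- whose Lagrangian angle $\theta$ has oscillation strictly greater than $\pi$ and which splits into two well-defined sheets with phases $\phi_1,\phi_2$ such that $|\phi_1-\phi_2|>\pi$. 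Such a construction is possible precisely because the almost-calibrated condition is not preserved under Hamiltonian isotopies. The resulting $\tilde L$ is embedded, zero-Maslov, and Hamiltonian isotopic to $L$.

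The next step is to show that \lmcf\ starting from $\tilde L$ cannot exist smoothly for all time. Since the Lagrangian angle evolves by $\partial_t \theta = \Delta \theta$, the maximum principle forces $\sup \theta - \inf \theta$ to be nonincreasing, so the oscillation of $\theta$ remains strictly greater than $\pi$ throughout the smooth lifespan, and in particular $\tilde L_t$ can never become almost-calibrated. If the flow did exist for all time, a Huisken-type monotonicity argument applied at any sequence where the second fundamental form blows up would produce a zero-Maslov tangent flow in $\C^2$; by known rigidity for such tangent flows in complex dimension two this limit must be a union of special Lagrangian cones whose phases are contained in $[\inf\theta,\sup\theta]$. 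The persistence of the two angle sheets forces the tangent flow to contain a transverse pair of SLag planes with phases $\phi_1,\phi_2$, which is incompatible with smooth global existence and hence yields a finite-time singularity.

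The main obstacle is quantitative: one has to ensure that the singular behavior is genuinely driven by the local bump, rather than smoothed out by non-local dynamics coming from the remainder of $L$. I would handle this with two ingredients. First, localize Huisken's monotonicity formula at the bump, choosing the support small enough that on the relevant time scale the monotone quantity is controlled by the bump's own geometry and not by the far field. Second, use a graphical stability estimate for \lmcf\ of almost-flat Lagrangians to ensure that outside the Darboux chart the flow stays close to the unperturbed $L$ and cannot ``reabsorb'' the angle spread. Together these guarantee that the two angle sheets of the bump must collide in finite time, delivering the desired counterexample in every Hamiltonian isotopy class and thus disproving the conjecture.
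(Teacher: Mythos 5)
Your construction of $\tilde L$ is in the spirit of the paper: a local surgery in a Darboux ball that grafts on a Whitney-sphere-type piece (the paper's $N(\varepsilon,\underline R)$) and thereby forces the Lagrangian angle oscillation of the new Lagrangian to exceed $\pi$. That part is fine. The gap is in the second half: you have not actually argued that a finite-time singularity must occur, and the argument as written is circular.

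Look carefully at the sentence ``If the flow did exist for all time, a Huisken-type monotonicity argument applied at any sequence where the second fundamental form blows up would produce a zero-Maslov tangent flow.'' If the second fundamental form blows up at a finite time you already have your singularity; if the flow exists smoothly for all time, there is no such sequence to blow up along. What you derive from the maximum principle for $\theta$ is only that $\tilde L_t$ never becomes almost-calibrated, which is a statement about what the flow \emph{cannot converge to}, not a proof that it blows up in finite time. The paper is explicit about this obstacle: even though $L$ and $L_1$ would lie in different Hamiltonian isotopy classes, mean curvature flow is not an ambient Hamiltonian isotopy, so ``there is no topological obstruction to go from $L$ to $L_1$ without singularities.'' In particular there is a priori nothing preventing the two angle sheets of the bump from sliding past one another smoothly (as immersed Lagrangians can do), and your proposed ingredients --- a localized monotonicity formula and a far-field graphical estimate --- only control \emph{where} the interesting dynamics happen, not \emph{that} they end in a singularity.

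What is missing is the paper's actual mechanism. After the localization step (your ``graphical stability estimate'' corresponds roughly to Section~\ref{geral}), the paper shows that if the flow of the inner piece $Q_{1,t}$ stays smooth then at time $1$ it must be $C^{2,\alpha}$-close to the Anciaux self-expander $\mathcal S$ (Theorem~\ref{proximity.final}); this uses the exactness of $Q^1$, the almost-calibrated bound on $Q^1$, and the non-area-minimizing property of the plane pair $P_1+P_2$, none of which appear in your sketch. Then $L_1$ is close to an $SO(2)$-equivariant Lagrangian $M_1$ whose equivariant flow provably develops a singularity at a time $T_0$ at which a loop collapses, and the key quantitative input is a \emph{jump of $2\pi$ in the Lagrangian angle} across $T_0$ (Theorem~\ref{sing.flow} and Corollary~\ref{sing.equiv}). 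Comparing the angle function $f(t)=\theta_t(\infty)-\theta_t(0)$ for $L_t$ against the discontinuous one for $M_t$ gives the contradiction with smooth global existence. This winding-number/jump argument is the heart of the proof, and it is genuinely different from the ``phases must collide'' heuristic; your write-up contains no substitute for it.
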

We remark that in \cite{wang} this conjecture is attributed to Thomas and Yau  but this is not correct because there is no mention of stability. For this reason, this conjecture, due to Mu-Tao Wang, is a weaker version of  Thomas-Yau conjecture.

Schoen and Wolfson \cite{schoen1} constructed solutions to Lagrangian mean curvature flow which become singular in finite time and where the initial condition is homologous to a SLag $\Sigma$. On the other hand,  we remark that the flow {\em does} distinguish between isotopy class and homology class. For instance, on a two dimensional torus, a curve $\gamma$  with a single self intersection which is homologous to a simple closed geodesic will develop a finite time singularity under curve shortening flow while if we make the more restrictive assumption that $\gamma$ is isotopic to a simple closed geodesic, Grayson's Theorem \cite{grayson} implies that the curve shortening flow will exist for all time and sequentially converge to a simple closed geodesic.

The purpose of his paper is to prove

\begin{thma}\label{main0} Let $M$ be a four real dimensional Calabi-Yau and $\Sigma$ an embedded Lagrangian. There is $L$ Hamiltonian isotopic to $\Sigma$ so that the Lagrangian mean curvature flow starting at $L$ develops a finite time singularity.
\end{thma}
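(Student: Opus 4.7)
The plan is to prove the theorem by a purely local Hamiltonian modification of $\Sigma$ that inserts a specific Lagrangian ``obstruction'' inside a Darboux ball, and then to show that the Lagrangian mean curvature flow starting at the modified surface $L$ cannot remain smooth for all time because of a neck-pinch forced by the local model.

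First I would pick a point $p\in\Sigma$ and a Darboux chart $(U,\omega_{\mathrm{std}})$ in which $\Sigma\cap U$ is identified with a piece of the Lagrangian plane $\mathbb{R}^2\subset\mathbb{C}^2$. Inside a small ball $B\subset U$ I would build an exact compactly supported Lagrangian $\Lambda$ that coincides with $\mathbb{R}^2$ near $\partial B$ and whose interior consists of two disks $D_1, D_2$ joined by a thin neck, with $D_1, D_2$ chosen close to two transverse Lagrangian planes whose Lagrangian angles $\theta_1,\theta_2$ differ by nearly $\pi$. Exactness of $\Lambda$ inside the contractible chart produces a compactly supported Hamiltonian isotopy of $M$ carrying $\Sigma$ to the resulting surface $L$, so $L$ and $\Sigma$ lie in the same Hamiltonian class.

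Second, and this is the heart of the argument, I would show that Lagrangian mean curvature flow starting at $L$ cannot exist smoothly on $[0,\infty)$. Arguing by contradiction, I would run a parabolic blow-up of $L_t$ at points where the second fundamental form concentrates near the neck, apply Huisken's monotonicity formula, and pass to a tangent flow $\Sigma_\infty\subset\mathbb{C}^2$. Using the classification of blow-up limits for zero-Maslov Lagrangian mean curvature flow in $\mathbb{C}^2$, any such $\Sigma_\infty$ must be a finite union of special Lagrangian cones, each with constant Lagrangian angle. Since the local model forces two sheets with Lagrangian angles differing by close to $\pi$, the only admissible tangent flow is a transverse union of two Lagrangian planes --- in particular not smooth --- contradicting the assumed smoothness of $L_t$.

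The main obstacle will be this last step, and specifically ruling out the ``convergence without singularity'' scenario in which the flow smooths out the lobes and stabilises. To handle it, I would design $\Lambda$ so that the two lobes' Lagrangian angles lie symmetrically around a phase $\phi_0$, and the combined calibrated quantity $\mathrm{Re}\int_\Lambda e^{-i\phi_0}\Omega$ is strictly smaller than the area of any smooth Lagrangian filling of the neck region with controlled angle oscillation in the same relative class. The monotonicity of $\int_{L_t}\cos(\theta-\phi_0)\,d\mathcal{H}^2$ along the flow then quantifies the obstruction and localises the singularity, giving the contradiction. This is the quantitative analogue of the intuitive picture behind Thomas-Yau stability and its failure for the particular $L$ constructed here.
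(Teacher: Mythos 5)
Your construction of $L$ by gluing a local two-lobed Lagrangian model into a Darboux ball and invoking exactness to get a compactly supported Hamiltonian isotopy is in the right spirit and broadly matches the paper's first step (the paper glues in the rotationally symmetric model $N(\varepsilon,\underline R)$ built from three Lagrangian planes). The genuine gap is in the mechanism by which you force a singularity, and it appears in two places.

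First, the blow-up argument as stated is circular. You say you would ``run a parabolic blow-up of $L_t$ at points where the second fundamental form concentrates near the neck.'' But if the flow exists smoothly for all time, no such concentration points need exist; the entire problem is to \emph{prove} that curvature must concentrate. Classification of tangent flows by unions of special Lagrangian cones (from the theory of zero-Maslov Lagrangian MCF) is something you get to use \emph{after} a singularity is known to occur, not a tool for producing one. Second, the proposed repair via monotonicity of $\int_{L_t}\cos(\theta-\phi_0)\,d\mathcal{H}^2$ cannot work for the $L$ you construct. That quantity equals $\mathrm{Re}\int_{L_t}e^{-i\phi_0}\Omega$, which is a cohomological period and is \emph{constant}, not monotone, along a smooth flow, so it yields no quantitative contraction. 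Moreover your $L$ has Lagrangian angle oscillation close to $\pi$ (by design), so it is not almost-calibrated, and $\cos(\theta-\phi_0)$ changes sign; the familiar almost-calibrated estimates that relate this period to area do not apply. The paper is quite explicit that no soft argument of this type is available: since Lagrangian MCF is only known to be an infinitesimal Hamiltonian deformation, there is no topological obstruction ruling out a smooth evolution that unknots the local configuration, and the author states he could not make the direct ``the neck must pinch'' argument work.

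What the paper actually does is substantially different and is the part your proposal is missing. After the gluing and localization (comparable to your step one), the paper shows (Theorem \ref{proximity.final}) that the inner component of $L_1$ must be $C^{2,\alpha}$-close to the Anciaux self-expander $\mathcal{S}$ asymptotic to $P_1+P_2$; this uses crucially both that $(L_t)$ is assumed smooth on $[0,2]$ and that $P_1+P_2$ is not area-minimizing, via a quantitative monotonicity estimate for $\beta_t+2t\theta_t$ (Proposition \ref{monotone}, Proposition \ref{proximity_se}). This puts $L_1$ near a rotationally symmetric model $M_1$ generated by a curve $\sigma$ with a single self-intersection. The equivariant flow of $M_1$ necessarily develops a loop-collapse singularity at some finite $T_0$, and a winding number computation (Corollary \ref{sing.equiv}) shows that the function $t\mapsto\theta_t(\infty)-\theta_t(0)$ on $M_t$ jumps by $2\pi$ across $T_0$, whereas for a smooth $L_t$ shadowing $M_t$ it must be Lipschitz. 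That contradiction, not a calibration inequality, is what forces the singularity. Without something playing the role of these two steps --- the self-expander attractor estimate and the Lagrangian-angle discontinuity --- your proposal does not close.
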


\begin{rmk}
\begin{enumerate}
\item [1)] If we take $\Sigma$ to be a SLag, the theorem implies the second conjecture is false because $L$ is then zero-Maslov class.
\item[2)] Theorem A provides the first examples of  compact embedded Lagrangians  which are not homologically trivial  and for which mean curvature flow develops a finite time singularity. The main difficulty comes from the fact, due to the high codimension,  barrier arguments or maximum principle arguments do not seem to be as effective as in the codimension one case and thus new ideas are needed.
\item[3)] One way to picture $L$ is to imagine a very small Whitney sphere $N$ (Lagrangian sphere with a single transverse self-intersection at $p$ in $\Sigma$) and consider $L=\Sigma\# N$(see local picture in Figure \ref{modelo}). 
\item[4)] If $\Sigma$ is SLag, then for every $\varepsilon$ we can make the oscillation for the Lagrangian angle of $L$ lying  in $[-\varepsilon, \pi+\varepsilon]$. Thus $L$ is not almost-calibrated and so does not qualified to be flow-stable in the sense of Thomas-Yau.
\item[5)] It is a challenging open question whether or not one can find  $L$ Hamiltonian isotopic to a SLag with arbitrarily small oscillation of the Lagrangian angle such that  mean curvature flow develops finite time singularities.  More generally, it is a fascinating problem to state a Thomas-Yau type conjecture which would have easier to check hypothesis on the initial condition  and allows (or not) for the formation of a restricted type of singularities.
\end{enumerate}
\end{rmk}

{\bf Acknowledgements:} The author would like to thank Sigurd Angenent for his remarks regarding Section \ref{reg.eq} and Richard Thomas  and Dominic Joyce for their kindness in explaining the notion of stability for the flow. He would also like to express his gratitude to Felix Schulze for his comments on Lemma \ref{fatlemma} and to one of the referees for the extensive comments and explanations which improved greatly the exposition of this paper.

\section{Preliminaries and sketch of proof}

In this section we describe  the mains ideas that go into the proof of Theorem A but first we have to introduce some notation.

\subsection{Preliminaries}\label{prelim}
Fix  $(M,J,\omega,\Omega)$  a four dimensional Calabi-Yau manifold with Ricci flat metric $g$, complex structure $J$, K\"ahler form $\omega$, and calibration form $\Omega$.  For every $R$ set $g_R=R^2 g$ and consider $G$ to be an isometric embedding of $(M,g_{R})$ into some Euclidean plane $\R^n$.  $L$ denotes a smooth Lagrangian surface contained in $M$ and $(L_t)_{t\geq 0}$ a smooth solution to Lagrangian mean curvature flow with respect to one of the metrics $g_R$ (different $R$ simply change the time scale of the flow).  It is simple to recognize the existence of $F_t:L\longrightarrow \R^n$ so that the surfaces  $L_t=F_t(L)$ solve the equation
$$\frac{dF_t}{dt}(x)=H(F_t(x))=\bar H(F_t(x))+E(F_t(x), T_{F_t(x)}L_t),$$
where $H(F_t(x))$ stands for the mean curvature with respect to $g_R$, $\bar H(F_t(x))$ stands for the mean curvature with respect to the Euclidean metric and $E$ is some vector valued function defined on $\R^n\times G(2,n)$, with $G(2,n)$ being the set of $2$-planes in $\R^n$. The term $E$ can be made arbitrarily small by choosing $R$ sufficiently large. In order to avoid introducing unnecessary notation, we will not be explicit whether we are regarding $L_t$ being a submanifold of $M$ or $\R^n$.

Given any $(x_0,T)$ in $\R^n\times \R$, we consider the backwards heat kernel
$$\Phi(x_0,T)(x,t)=\frac{\exp\left(-\frac{|x-x_0|^2}{4(T-t)}\right)}{4\pi(T-t)}.$$
We need the following extension of Huisken's monotonicity \cite{huisken} formula which follows trivially from \cite[Formula (5.3)]{Wa1}.
\begin{lemm}[Huisken's monotonicity formula]\label{huisken1} Let $f_t$ be a smooth family of functions with compact support on $L_t$. Then
\begin{multline*}
	\frac{d}{dt}\int_{L_t} f_t\Phi(x_0,T)d\H^2=
	\int_{L_t}\left( \partial_t f_t-\Delta f_t \right)\Phi(x_0,T)d\H^2\\
	-\int_{L_t}\left|\bar H+\frac{E}{2}+\frac{(\x-x_0)^{\bot}}{2(T-t_0)}\right|^2\Phi(x_0,T)d\H^2+\int_{L_t} f_t\Phi(x_0,T)\frac{|E|^2}{4}d\H^2.
\end{multline*}
\end{lemm}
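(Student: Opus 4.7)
The plan is to adapt the classical derivation of Huisken's monotonicity formula, with two modifications: the flow velocity is $V := \bar H + E$ rather than $\bar H$, and the integrand carries the extra factor $f_t$. The crucial preliminary I would establish first is that both $\bar H$ and $E = H - \bar H$ are orthogonal to $TL_t$ as vectors in $\R^n$. For $\bar H$ this is by definition, and for $H$ (hence for $E$) it follows because $H$ is orthogonal to $TL_t$ within $TM$ and $M\hookrightarrow \R^n$ is isometric, so Euclidean orthogonality is inherited. With $V$ thus $\R^n$-normal, parametrizing $L_t = F_t(L)$ and applying the standard first variation formula for moving surfaces gives
\[
\frac{d}{dt}\int_{L_t} f_t \Phi \, d\H^2 = \int_{L_t}\bigl[(\partial_t f_t)\Phi + f_t\bigl(\partial_t \Phi + V \cdot \nabla^{\R^n}\Phi - \langle V, \bar H\rangle\Phi\bigr)\bigr] d\H^2.
\]

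Next I would invoke the pointwise identity underlying Huisken's classical formula. It follows by direct calculation from $\nabla^{\R^n}\Phi = -(x-x_0)\Phi/(2(T-t))$, the Gauss-type formula $\Delta_{L_t}\phi = \mathrm{tr}_{TL_t}\mathrm{Hess}^{\R^n}\phi + \bar H \cdot \nabla^{\R^n}\phi$, and the orthogonal decomposition $x-x_0 = (x-x_0)^{\top}+(x-x_0)^{\bot}$, and reads
\[
\partial_t \Phi + \bar H \cdot \nabla^{\R^n}\Phi - |\bar H|^2\Phi = -\Delta_{L_t}\Phi - \Bigl|\bar H + \frac{(x-x_0)^{\bot}}{2(T-t)}\Bigr|^2\Phi.
\]

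Finally I would incorporate the error $E$. Splitting $V = \bar H + E$ in the first-variation expression produces an additional contribution $f_t(E \cdot \nabla^{\R^n}\Phi - \langle E, \bar H\rangle \Phi)$. Normality of $E$ gives $E \cdot \nabla^{\R^n}\Phi = -E \cdot (x-x_0)^{\bot}\Phi/(2(T-t))$, so this extra term becomes $-E \cdot [\bar H + (x-x_0)^{\bot}/(2(T-t))]\Phi$, which when added to the Huisken squared term and reorganized via the elementary identity $-|A|^2 - A \cdot E = -|A + E/2|^2 + |E|^2/4$ produces exactly $-|\bar H + E/2 + (x-x_0)^{\bot}/(2(T-t))|^2\Phi + |E|^2\Phi/4$. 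An integration by parts $\int_{L_t} f_t \Delta_{L_t}\Phi \, d\H^2 = \int_{L_t}(\Delta_{L_t} f_t)\Phi \, d\H^2$, justified by compact support of $f_t$, then delivers the stated identity. The step I expect to require most care is the initial normality observation: once it is in hand, the classical Euclidean Huisken argument goes through unchanged, with $E$ absorbed cleanly into the shifted squared quantity at the cost of the explicit error $|E|^2\Phi/4$, which is harmless for $R$ large.
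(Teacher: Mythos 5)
Your derivation is correct and self-contained, whereas the paper offers no proof at all, remarking only that the identity ``follows trivially from [Wa1, Formula (5.3)].'' Your route is the standard proof of Huisken's weighted monotonicity, with the extra observation—correctly supplied—that $E=H-\bar H$ is Euclidean-normal to $L_t$: since $G:(M,g_R)\hookrightarrow\R^n$ is isometric, $g_R$-orthogonality to $TL_t$ inside $TM$ coincides with Euclidean orthogonality, so $H$ (and hence $E$) is $\R^n$-normal. The first-variation formula, the pointwise Huisken identity
\[
\partial_t\Phi+\bar H\cdot\nabla^{\R^n}\Phi-|\bar H|^2\Phi=-\Delta_{L_t}\Phi-\Bigl|\bar H+\frac{(x-x_0)^{\bot}}{2(T-t)}\Bigr|^2\Phi,
\]
and the completion of the square $-|A|^2-A\cdot E=-|A+E/2|^2+|E|^2/4$ are exactly the right ingredients, and the integration by parts is justified by compact support of $f_t$. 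One useful side effect of carrying out the derivation in full is that it exposes two typos in the lemma as stated: the middle integral should carry the weight $f_t$, that is, it should read
\[
-\int_{L_t} f_t\Bigl|\bar H+\frac{E}{2}+\frac{(x-x_0)^{\bot}}{2(T-t)}\Bigr|^2\Phi(x_0,T)\,d\H^2,
\]
and the $t_0$ appearing in the denominator should be $t$. Both corrections are what your computation actually produces, and both are consistent with the way the lemma is applied in the proof of Proposition \ref{monotone}, where $f_t=(\gamma_t\phi)^2\geq 0$ and the nonpositive middle term is discarded to obtain an inequality.
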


We  denote  
$$A(r_1, r_2)=\{x\in \R^{n}\,|\, r_1<|x|<r_2\},\quad B_{r}=A(-1,r),$$
and define the $C^{2,\alpha}$ norm of a surface $N$ at a point $x_0$ in $\R^n$ as in \cite[Section 2.5]{white}. This norm is scale invariant and, given an open set $U$,  the $C^{2,\alpha}(U)$ norm of $N$ denotes the supremum in $U$ of the pointwise $C^{2,\alpha}$ norms.  We say $\bar N$ is $\nu$-close in $C^{2,\alpha}$ to $N$ if there is an open set $U$ and a function $u:N\cap U\longrightarrow \R^n$ so that $\bar N=u(N\cap U)$ and the $C^{2,\alpha}$ norm of $u$ (with respect to the induced metric on $N$) is smaller than $\nu$.

\subsubsection{Definition of $N(\varepsilon,\underline R)$}\label{prelim.n}

Let $c_1,$ $c_2$, and $c_3$ be three half-lines in $\C$ so that $c_1$  is  the positive real axis and $c_2,$  $c_3$ are, respectively, the positive line segments spanned by $e^{i\theta_2}$ and $e^{i\theta_3}$, where $\pi/2<\theta_2<\theta_3<\pi.$ These curves generate three Lagrangian planes in $\R^4$ which we denote by $P_1, P_2,$ and $P_3$ respectively.
 Consider a curve $\gamma(\varepsilon):[0,+\infty)\longrightarrow \C$  such that (see Figure \ref{modelo})
 \begin{itemize}
\item $\gamma(\varepsilon)$ lies in the first and second quadrant and $\gamma(\varepsilon)^{-1}(0)=0$;
\item  $\gamma(\varepsilon)\cap A(3,\infty)=c_1^+\cap  A(3,\infty)$ and $\gamma(\varepsilon)\cap A(\varepsilon,1)=(c_1^+\cup c_2\cup c_3)\cap  A(\varepsilon,1)$; 
\item $\gamma(\varepsilon)\cap B_{1}$ has two connected components $\gamma_1$ and $\gamma_2$, where $\gamma_1$ connects $c_2$ to $c_1^+$ and $\gamma_2$ coincides with $c_3$; 
\item The Lagrangian angle of $\gamma_1$, $\arg \left(\gamma_1 \frac{d\gamma_1}{ds}\right)$,  has oscillation strictly smaller than $\pi/2$.
\end{itemize}
Set $\gamma(\varepsilon,\underline R)=\underline R\gamma(\varepsilon/\underline R)$. We define
\begin{equation}\label{basic.block.eq}
N(\varepsilon,\underline R)= \{(\gamma(\varepsilon,\underline R)(s)\cos \alpha,\gamma(\varepsilon,\underline R)(s)\sin \alpha)\,|\, s\geq 0, \alpha \in S^1\}.
\end{equation}

\begin{figure}
\centering {\epsfig{file=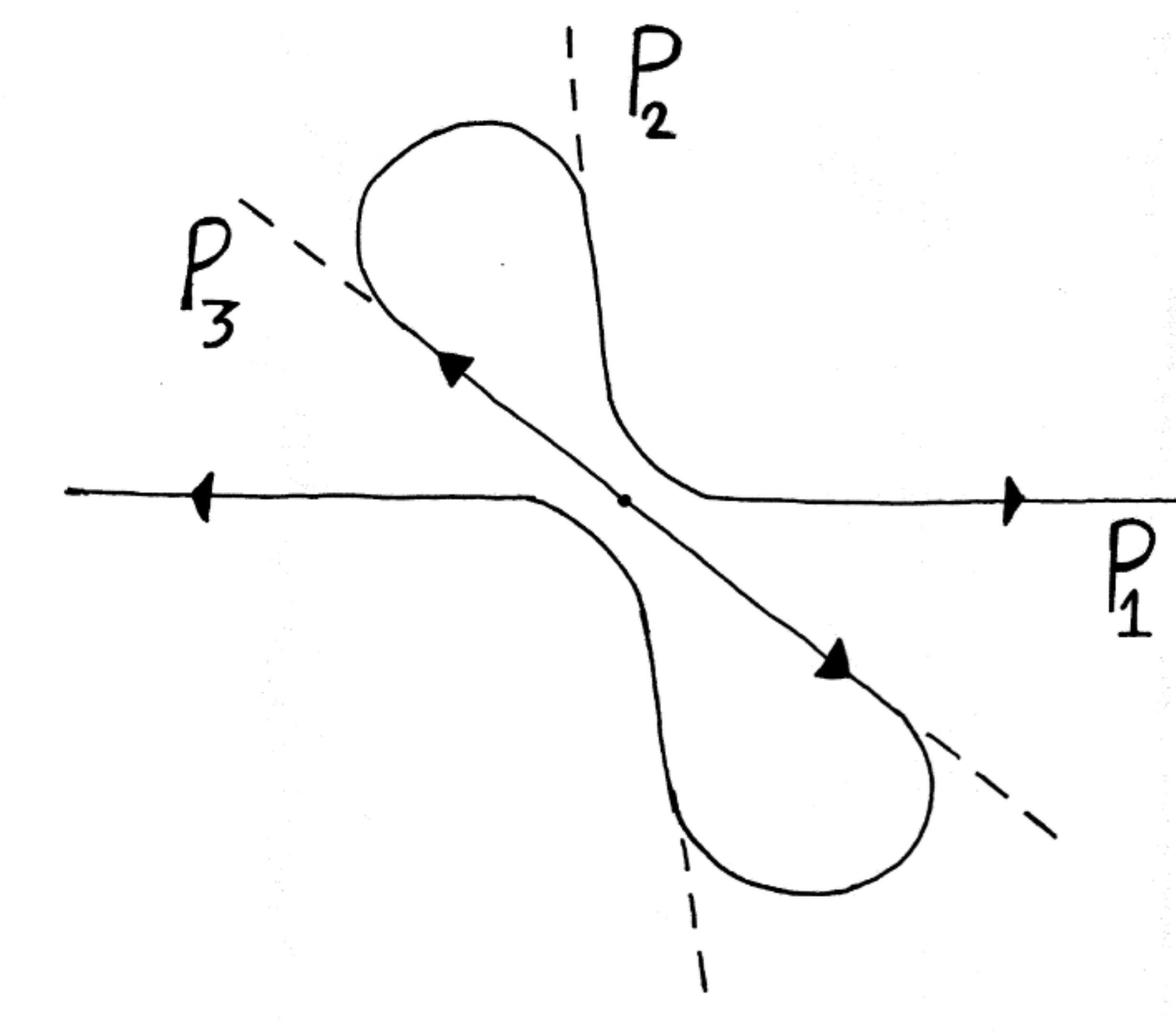, height=160pt}}\caption{Curve $\gamma(\varepsilon)\cup -\gamma(\varepsilon)$.}\label{modelo}
\end{figure}

We remark that one  can make the oscillation for the Lagrangian angle of $N(\varepsilon, \underline R)$ as close to $\pi$ as desired by choosing $\theta_2$ and $\theta_3$ very close to $\pi/2$.
\subsubsection{Definition of self-expander}\label{prelim.s}
A surface $\Sigma\subseteq \R^4$ is called a {\em self-expander} if $H=\frac{x^{\bot}}{2}$, which is equivalent to say that $\Sigma_t=\sqrt t \Sigma$ is a solution to mean curvature flow. We say that $\Sigma$ is {\em asymptotic} to a varifold $V$ if, when $t$ tends to zero,  $\Sigma_t$ converges in the Radon measure sense to $V$.  For instance, Anciaux \cite[Section 5]{anciaux} showed there is a unique curve  $\chi$ in $\C$ so that
\begin{equation}\label{Q}
\mathcal{S}=\{(\chi(s)\cos \alpha, \chi(s)\sin \alpha)\,|\, s\in \R,\alpha \in S^1\}
\end{equation} is a self-expander for Lagrangian mean curvature flow asymptotic to $P_1+P_2.$ 
\begin{figure}
\centering {\epsfig{file=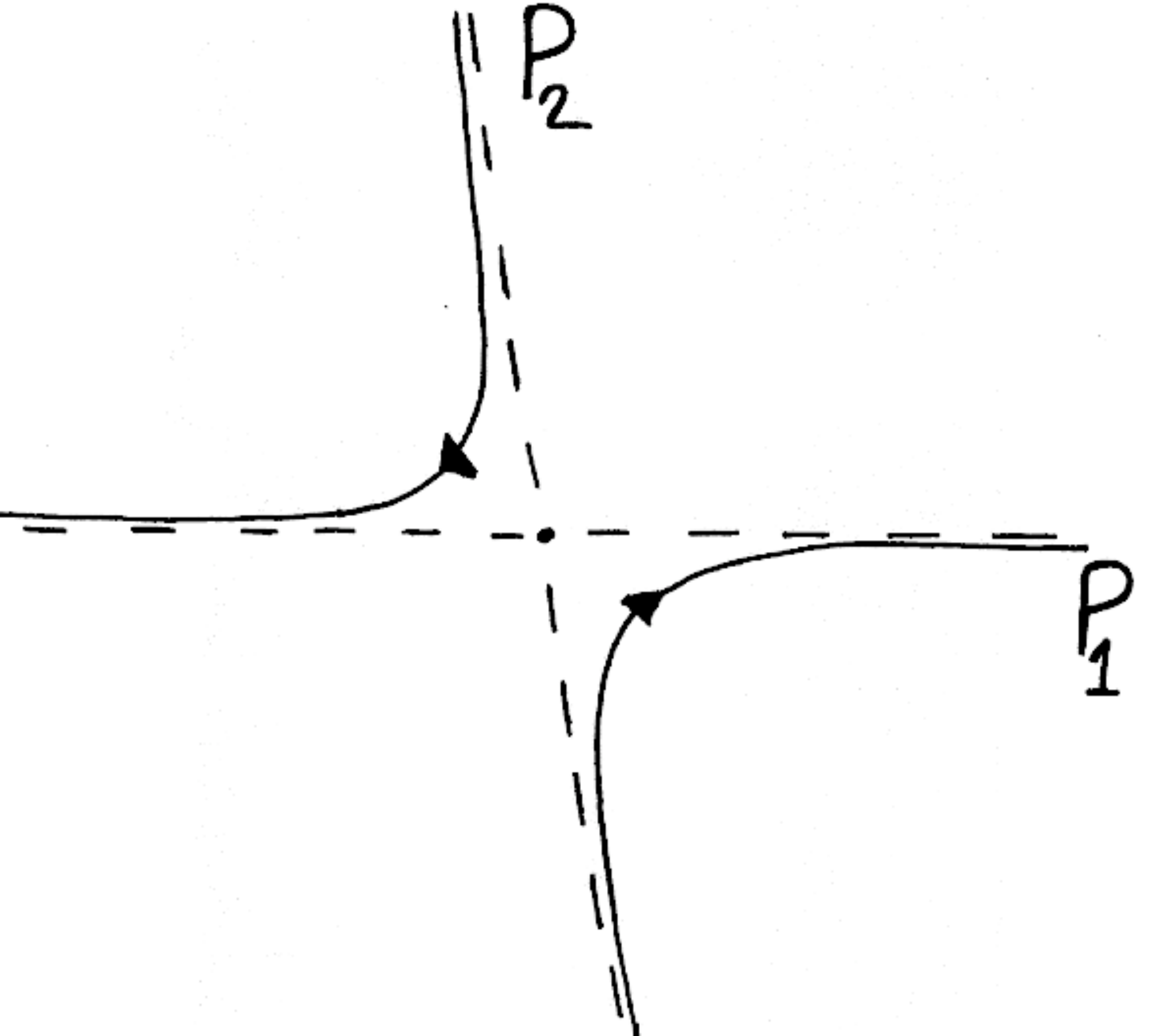, height=160pt}}\caption{Curve $\chi\cup -\chi$.}\label{modelo-se} 
\end{figure}

\subsection{Sketch of Proof}\label{sketch}

\begin{thma} Let $M$ be a four real dimensional Calabi-Yau and $\Sigma$ an embedded Lagrangian. There is $L$ Hamiltonian isotopic to $\Sigma$ so that the Lagrangian mean curvature flow starting at $L$ develops a finite time singularity.
\end{thma}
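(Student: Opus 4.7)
I would begin by choosing a point $p\in\Sigma$ and a Darboux chart around $p$, identifying a neighborhood of $p$ in $M$ with $B_1\subset \R^4$ so that $\Sigma$ corresponds locally (after a small Hamiltonian perturbation) to the Lagrangian plane $P_1$. For small parameters $\varepsilon\ll \underline R\ll 1$, set $L$ equal to $\Sigma$ outside this chart and equal to the $S^1$-equivariant Lagrangian $N(\varepsilon,\underline R)$ of (\ref{basic.block.eq}) inside; the gluing is seamless because $N(\varepsilon,\underline R)$ coincides with $P_1$ for $|x|>3\underline R$. Embeddedness follows from $\pi/2<\theta_2<\theta_3<\pi$ and the separation between $\gamma_1$ and $\gamma_2$ in angular sectors. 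Because the bubble attached to $\Sigma$ is a smoothing of a Whitney-type sphere (Lagrangian), Hamiltonian isotopic to a point via contraction of the $c_3$-disk and collapse of the neck $\gamma_1$, the resulting $L$ is Hamiltonian isotopic to $\Sigma$ in $M$.

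\textbf{Parabolic blow-up at $p$.} Suppose, for contradiction, that the \lmcf starting from $L$ remains smooth on $[0,T_1]$, where $T_1>0$ is bounded away from $0$ independently of $\varepsilon,\underline R$ (this uniform $T_1$ comes from pseudolocality for $L\setminus U$ combined with Lemma \ref{huisken1} applied near $p$). Pick sequences $\underline R_i\downarrow 0$ with $\varepsilon_i/\underline R_i\downarrow 0$ and parabolically rescale at $(p,0)$ by $\underline R_i^{-1}$:
\[
\tilde L^i_t \;=\; \underline R_i^{-1}\bigl(L^i_{\underline R_i^2 t}-p\bigr),\qquad t\in[0,T_1/\underline R_i^2].
\]
Under this rescaling the correction $E$ in Lemma \ref{huisken1} is of order $O(\underline R_i)$ and disappears in the limit, while $\tilde L^i_0$ converges in Radon measure to $P_1+P_2+P_3$. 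Brakke compactness combined with Huisken's monotonicity then produces a subsequential Brakke limit $\tilde L^\infty_t$ defined on $[0,\infty)$, with initial datum $P_1+P_2+P_3$.

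\textbf{Limit identification (main obstacle).} The crux is to show that $\tilde L^\infty_t=\sqrt{t}\,\mathcal S\cup P_3$, with $\mathcal S$ the Anciaux self-expander of (\ref{Q}). The $SO(2)$-equivariance of $N(\varepsilon,\underline R)$ is exact in $\R^4$ and passes to the limit, reducing the problem to a weighted curve shortening flow in $\C$ starting from $c_1^+\cup c_2\cup c_3$, retaining the topological data that $\gamma_1$ is a neck connecting $c_1^+$ and $c_2$. The component carrying the neck must converge to $\sqrt{t}\,\chi$ by uniqueness of Anciaux's self-expanding curve asymptotic to $c_1^++c_2$; the component $c_3$ is self-similarly static since a Lagrangian plane through the origin satisfies $H=0$. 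The serious technical difficulty is ruling out alternative scenarios, namely (i) pinching-off of the neck into a self-shrinker, which is excluded by the oscillation bound on the Lagrangian angle of $\gamma_1$ being strictly less than $\pi/2$ (leaving no almost-calibrated self-shrinker available), and (ii) mass escape or merging with $P_3$ at intermediate scales, handled by cutoff applications of Lemma \ref{huisken1} together with density and entropy estimates.

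\textbf{Contradiction.} Once the limit is identified, the Gaussian area of $\tilde L^\infty$ at scale $T$ centered at $(0,0)$ is $3$ at the initial time but strictly less than $3$ for every $t>0$, because the curvature term $|\bar H+(\x-0)^\bot/(2(T-t))|^2$ in Lemma \ref{huisken1} has strictly positive integral on the non-flat expander $\sqrt{t}\,\mathcal S$. This drop is quantitative, uniform in the tail $t\in[1,\infty)$ of rescaled time. Transporting it back through the rescaling gives, for each $\underline R_i$ small enough, a drop in $\int_{L^i_t}\Phi(p,T)\,d\H^2$ by a definite amount strictly larger than the error from $E$ and from the global geometry of $\Sigma$; this contradicts the monotonicity in Lemma \ref{huisken1} under the assumption that $L^i_t$ is smooth on $[0,T_1]$ with Gaussian area close to $\area_{P_1}$ at the initial time. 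Therefore the flow must develop a finite-time singularity on $[0,T_1]$, proving Theorem A.
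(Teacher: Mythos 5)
Your construction of $L$ and the parabolic blow-up at $p$ follow the same general design as the paper (the paper uses the equivalent device of rescaling the ambient metric $g_R = R^2 g$ and keeping $N(\varepsilon,\underline R)$ at unit scale, but that is cosmetic). The trouble lies in the final two steps.

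\textbf{Limit identification.} The statement ``the component carrying the neck must converge to $\sqrt{t}\,\chi$ by uniqueness of Anciaux's self-expanding curve'' presupposes that the Brakke limit is a \emph{smooth} self-expander, which is precisely the hard part. Nothing you have said rules out the limit being, for instance, the static configuration $P_1+P_2$, or a non-smooth limit coming from an intermediate singularity of the $Q^i_{1,t}$'s. The paper's Proposition \ref{proximity_se} invests most of its effort exactly here: Lemma \ref{stationary} shows the limit is not stationary (using that each $Q^i_{1,1}$ is connected on a fixed scale \emph{because} the flow is assumed smooth, plus the fact that $P_1+P_2$ is not area-minimizing so it cannot be a special Lagrangian limit), and Lemma \ref{small} shows Gaussian densities stay strictly below $2$, hence the limit is smooth and embedded. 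Your remark about ``almost-calibrated self-shrinkers'' being unavailable does not substitute for this; $\gamma_1$ having Lagrangian-angle oscillation below $\pi/2$ gives almost-calibratedness of $Q_{1,t}$, but this alone does not force the blow-up to be a smooth expander rather than a union of planes.

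\textbf{Contradiction.} This is where the argument fails. A drop in the Gaussian density ratio $\int \Phi(x_0,T)\,d\H^2$ from $3$ down to something less than $3$ is exactly what Huisken's monotonicity formula \emph{guarantees}; it is fully consistent with a smooth (or Brakke) flow, and there is nothing to transport back to the unrescaled flow that contradicts smoothness. Once you know $L^i_1$ is close to a surface $M_1$ generated by a self-intersecting curve $\sigma$, the paper's actual mechanism is entirely different (Third Step, Theorem \ref{sing.flow} and Corollary \ref{sing.equiv}): one follows the equivariant curve-shortening flow $\sigma_t$ forward until the loop enclosed by the self-intersection collapses at time $T_0 < T_1 = 2A_1/\pi + 1$, and observes that the rotation index of the relevant arc changes across $T_0$, so the function $f(t) = \theta_t(b_t) - \theta_t(a_t)$ jumps by $2\pi$. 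On the other hand, smoothness of the approximating flows $(L^i_t)$ together with the uniform $C^{2,\alpha}$ control in a small ball near the origin and near infinity forces the approximating $f^i$ to be uniformly Lipschitz, hence the limit $f$ is Lipschitz — a contradiction. None of this appears in your proposal, and without it your argument does not close: merely knowing the flow becomes close to $\sqrt t\,\mathcal S$ for $t \in [1,2]$ does \emph{not} contradict the global smoothness assumption, as the paper itself emphasizes (Lagrangian mean curvature flow is not an ambient Hamiltonian isotopy, so topological change from $P_1\#P_2$ to $P_2\#P_1$ alone is not an obstruction).

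Two smaller points: your assertion that $T_1$ is ``bounded away from $0$ independently of $\varepsilon,\underline R$ by pseudolocality'' is unnecessary and not how the paper sets things up — one assumes smoothness on $[0, T_1]$ by contradiction, with $T_1 = 2A_1/\pi + 1$ chosen \emph{a posteriori} from the enclosed area of the loop. And the Hamiltonian isotopy $L \simeq \Sigma$ is not as automatic as ``contraction of the $c_3$-disk''; the paper invokes Eliashberg--Polterovich's local Lagrangian knot-triviality theorem.
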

\begin{rmk}
The argument to prove Theorem \ref{main} has two main ideas. The first is to construct $L$ so that if the flow $(L_t)_{t\geq 0}$ exists smoothly, then $L_1$ and $L$ will be in  different Hamiltonian isotopy classes. Unfortunately this does not mean  the flow must become singular because Lagrangian mean curvature flow is not an {\em ambient} Hamiltonian isotopy. This is explained below in First Step and Second Step.

The second main idea is to note that $L_1$ is very close to a $SO(2)$-invariant Lagrangian $M_1$  which has the following property. The flow $(M_t)_{t\geq 1}$ develops a singularity at some time $T$ and  the Lagrangian angle will jump by $2\pi$ at instant $T$. Because the solution $(L_t)_{t\geq 1}$ will be ``nearby'' $(M_t)_{t\geq 1}$, this jump will  also occur on  $(L_t)_{t\geq 1}$  around time $T$ which means that it must have a singularity as well.

\end{rmk}
\begin{proof}[Sketch of proof]
It suffices to find a singular solution to Lagrangian mean curvature flow with respect to the metric $g_{R}=R^2 g$ for $R$ sufficiently large. Pick Darboux coordinates  defined on $B_{4\overline R}$ which send the origin into $p\in\Sigma$ so that $T_p\Sigma$ coincides with the real plane oriented positively and the pullback metric at the origin is Euclidean (we can increase $\overline R$ by making $R$ larger). The basic approach is to remove $\Sigma\cap B_{2\overline R}$ and replace it with $N(\varepsilon, \underline R)\cap B_{2\overline R}$. Denote the resulting Lagrangian by $L$ which, due to \cite[Theorem 1.1.A]{yasha}, we know to be  Hamiltonian isotopic to $\Sigma$.

Assume that the Lagrangian mean curvature flow $(L_t)_{t\geq 0}$ exists for all time. The goal is to get a contradiction when $\overline R,$  $\underline R$ are large enough and $\varepsilon$ is small enough.

\vspace{0.05in}

\noindent{\bf First step:} Because $L\cap A(1,2\overline R)$ consists of three planes which intersect transversely at the origin, we will use standard arguments based on White's Regularity Theorem \cite{white} and obtain estimates for the flow in a smaller annular region. Hence, we will conclude the existence of $R_1$ uniform so that $L_t\cap A(R_1, \overline R)$ is a small $C^{2,\alpha}$ perturbation of $L\cap A(R_1, \overline R)$ for all $1\leq t\leq2$ and the decomposition of $L_t\cap B_{\underline R}$  into two connected components $Q_{1,t}$, $Q_{2,t}$ for all $0\leq t\leq 2$, where  $Q_{2,0}=P_3\cap B_{\underline R}$. Moreover, we will also show that $Q_{2,t}$ is a small $C^{2,\alpha}$ perturbation of $P_3$ for all $1\leq t\leq 2$. This is done in Section \ref{geral} and the arguments are well-known among the experts. 

\vspace{0.05in}

\noindent{\bf Second step:}  In Section \ref{self-expanders} we show that $Q_{1,1}$ must be close to $\mathcal{S}$, the smooth self-expander asymptotic to $P_1$ and $P_2$ (see \eqref{Q} and Figure \ref{modelo-se}).  The geometric argument is that self-expanders act as attractors for the flow, i.e.,  because  $Q_{1,0}$   is very close to $P_1\cup P_2$ and $\sqrt t\mathcal{S}$ tends to $P_1+P_2$ when $t$ tends to zero,  then $Q_{1,t}$ must be very close to $\sqrt t \mathcal{S}$ for all $1\leq t\leq 2$. It is crucial for this part of the argument that $(Q_{1,t})_{0\leq t\leq 2}$ exists smoothly and that $P_1+P_2$ is not area-minimizing (see Theorem \ref{proximity.total} and Remark \ref{pro.total.rmk} for more details). This step is the first main idea of this paper.

From the first two steps it follows that  $L_1$ is very close to a Lagrangian $M_1$ generated by a curve $\sigma$ like the one in Figure \ref{modelo-3}. Because $Q_{1,0}$ is isotopic to $P_1\#P_2$  but $Q_{1,1}$ is isotopic to  $P_2\#P_1$ (in the notation of \cite{thomas}) we have that  $M_1$ is not Hamiltonian isotopic to $L$. Thus  it is not possible to connect the two by an ambient Hamiltonian isotopy. Nonetheless, as it was explained to the author by Paul Seidel, it is possible to connect them  by smooth Lagrangian immersions which are not rotationally symmetric nor embedded.  Unfortunately it is not known whether Lagrangian mean curvature flow is a Hamiltonian isotopy (only infinitesimal Hamiltonian deformation is known) and so there is no  topological obstruction to  go from $L$  to $L_1$ without singularities. 
\begin{figure}
\centering {\epsfig{file=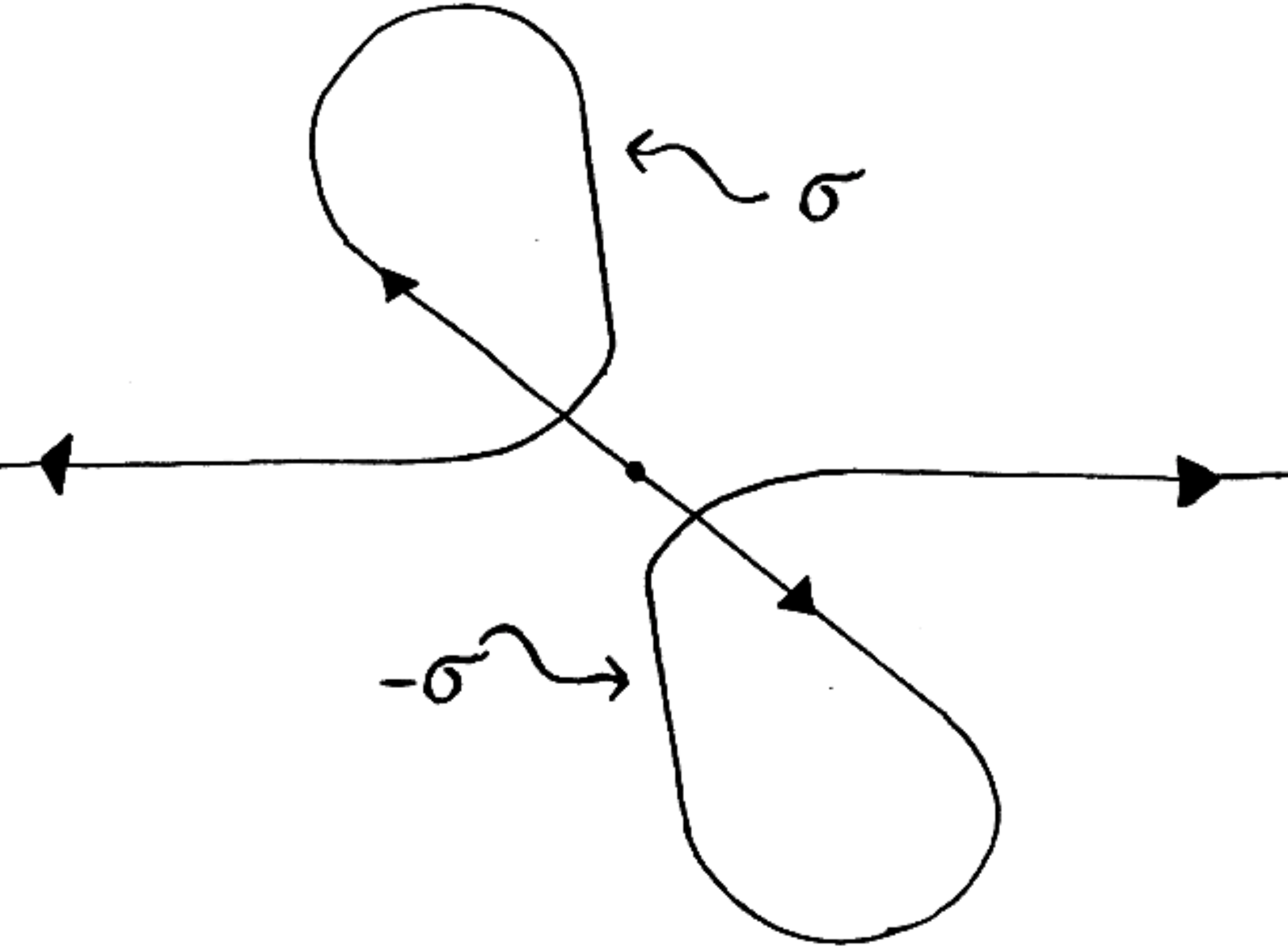, height=160pt}}\caption{Curve $\sigma\cup -\sigma$.}\label{modelo-3}
\end{figure}

Naturally we conjecture  that does not occur and that  $(L_t)_{0\leq t\leq 1}$ has a finite time singularity   which corresponds to the flow developing a ``neck-pinch'' in order to get rid of the non-compact ``Whitney Sphere'' $N(\varepsilon, \underline R)$ we glued to $\Sigma$. If the initial condition is simply $N(\varepsilon,\underline R)$ instead of $L$,  we showed in \cite[Section 4]{neves} that this conjecture is true but the arguments relied on the rotationally symmetric properties of $N(\varepsilon,\underline R)$ and thus cannot be extended to arbitrarily small perturbations like $L$. If this conjecture were true then the proof of Theorem \ref{main} would finish here. 

After several attempts, the author was unable to prove this conjecture and this lead us to the second main idea of this paper described below. Again we stress that, conjecturally, this case will never occur without going through ``earlier'' singularities.



\noindent{\bf Third step:}   Denote by $(M_t)_{t\geq 1}$ the evolution by mean curvature flow of $M_1$, the Lagrangian which corresponds to the curve $\sigma$. In Theorem \ref{sing.flow} we will show that $M_t$ is $SO(2)$-invariant and can be described by curves $\sigma_t$ which evolve the following way (see Figure \ref{modelo-4}). There is a singular time $T$ so that for all $t<T$ the curves $\sigma_t$ look like $\sigma$ but with a smaller enclosed loop. When $t=T$, this enclosed loop collapses and we have a singularity for the flow. For $t>T$, the curves $\sigma_t$ will become smooth and embedded.

We can now describe the second main idea of this paper (see Remark \ref{motiv.rmk}  and Corollary \ref{sing.equiv} for more details).
Because $\sigma_t$  ``loses'' a loop when $t$ passes through the singular time, winding number considerations will show that the Lagrangian angle of $M_t$ must suffer a discontinuity of $2\pi$. Standard arguments will show that, because $L_1$ is very close to $M_1$,  then $L_t$ will be very close to $M_t$  as well and so the Lagrangian angle of $L_t$ should also suffer a discontinuity of approximately $2\pi$ when $t$ passes through $T$. But this contradicts the fact that $(L_t)_{t\geq 0}$ exists smoothly.

\begin{figure}
\centering {\epsfig{file=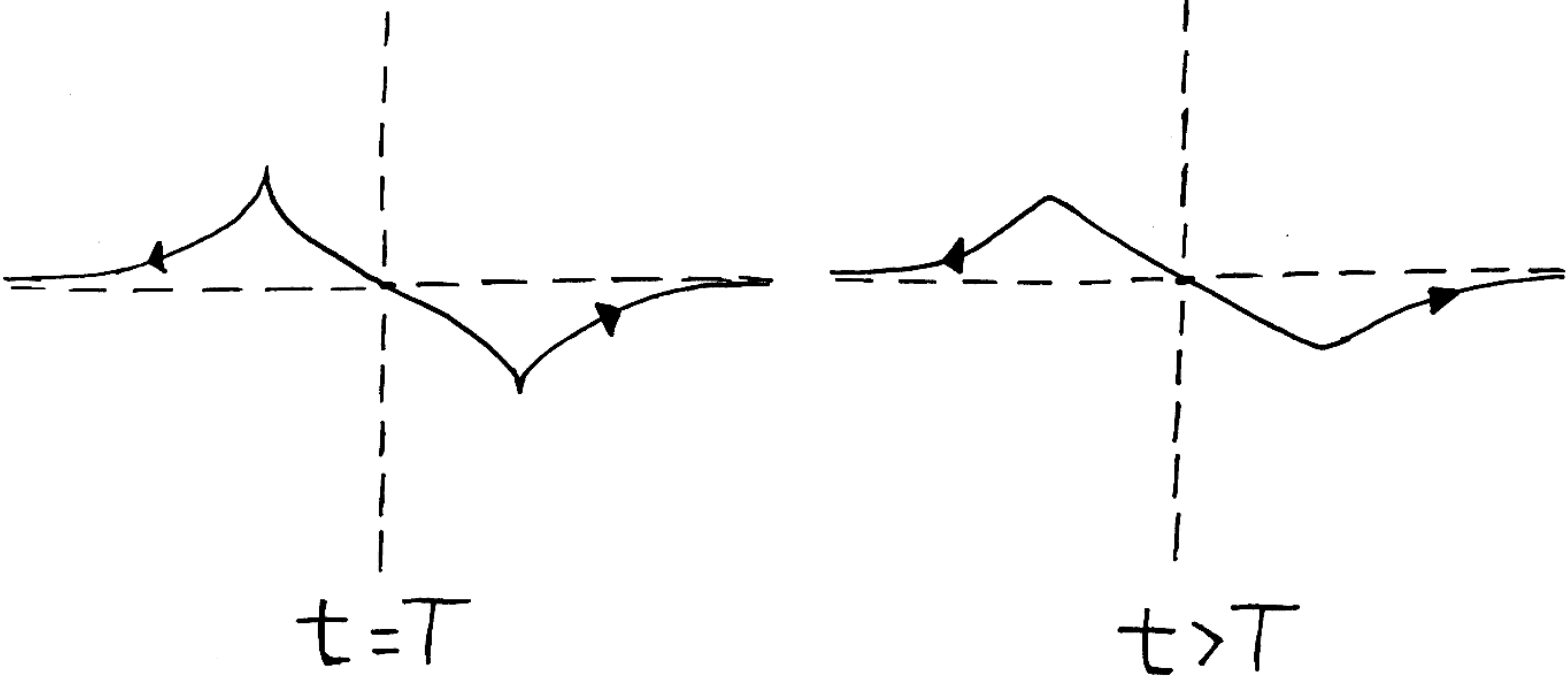, height=160pt}}\caption{Evolution of $\sigma_t.$}\label{modelo-4}
\end{figure}
\end{proof}

\subsection{Organization} The first step in the proof is done in Section \ref{geral} and it consists mostly of standard but slightly technical results, all of  which are well known. The second step is done in Section \ref{self-expanders} and the third step  is done in Section \ref{sta}. Finally, in Section \ref{ultima} the proof of Theorem \ref{main} is made rigorous and in the Appendix some basic results are collected.

Some parts of this paper are long and technical but can be skipped on a first reading.   Section \ref{geral} can be skipped and consulted only when necessary. On Section \ref{self-expanders} the reader can skip the proofs of Proposition \ref{monotone}, Proposition \ref{proximity_se} and read instead the outlines in Remark \ref{monotone.remark}, Remark \ref{proximity_se.rmk}. On Section \ref{sta} the reader can skip the proof of Theorem \ref{sing.flow}.

\section{First Step: General Results}\label{geral}
\subsection{Setup of Section \ref{geral}}\label{setup.geral} 
 \subsubsection{Hypothesis on ambient space}\label{ambient.space} 
 
 We assume the setting of Section \ref{prelim} and the existence of  a Darboux chart $$\phi:B_{4\overline R}\longrightarrow M,$$ meaning  $\phi^*\omega$ coincides with the standard symplectic form in $\R^4$ and  $\phi^*J$ and $\phi^*\Omega$ coincide, respectively,  with the standard complex structure and $dz_1\wedge dz_2$ at the origin.  Moreover, we assume that
 \begin{itemize}
  \item $\phi^*g_R$ is $ {1}/ {\overline R}$-close in $C^{3}$ to the Euclidean metric, 
  \item $G\circ \phi$ is $1/\overline R$-close in $C^{3}$ to the map that sends $x$ in $\R^4$ to $(x,0)$ in $\R^n$, 
  \item the $C^{0,\alpha}$ norm of $E$ (defined in Section \ref{prelim}) is smaller than $1/\overline R$, 
  \item and $G(M)\cap B_{4\overline R-1}\subseteq G\circ \phi(B_{4\overline R}).$ 
  \end{itemize}
  For the sake of simplicity, given any subset $B$ of $M$,  we  freely identify  $B$ with $\phi^{-1}(B)$ in $B_{4\overline R}$ or $G(B)$ in $\R^n$.
 
 \subsubsection{Hypothesis on Lagrangian $L$}\label{hyp.lagrangian}
 
We assume that  $L\subseteq M$ Lagrangian is such that
 \begin{equation}\label{defi2.geral}
 L\cap B_{2\overline R}=N(\varepsilon,\underline R) \cap B_{2\overline R} \mbox{ for some  } \overline R\geq 4\underline R, \end{equation}
 where $N(\varepsilon,\underline R)$ was defined in \eqref{basic.block.eq}. 
 Thus $L \cap B_{2\underline R}$ consists of two connected components $Q^1$ and $Q^2$, where 
 \begin{equation}\label{defi.geral}
 Q^1\setminus B_{\varepsilon}=(P_1 + P_2)\cap  A(\varepsilon, 2\underline R) \mbox{ and } Q^2=P_3\cap B_{2\underline R}.
 \end{equation}
 To be rigorous, one should use the notation $L_{\varepsilon,\underline R}$ for $L$. Nonetheless, for the sake of simplicity, we prefer the latter.  
 Finally we assume the existence of $K_0$ so that
\begin{itemize} 
\item $\area(L\cap B_r(x))\leq K_0 r^2\quad\mbox{for every } x\in M \mbox{ and }r\geq 0,$
\item the norm of second fundamental form of $M$ in $\R^n$ is bounded by $K_0,$  
\item $\sup_{Q^1} |\theta|\leq \pi/2-K_0^{-1}$ and  we can find   $\beta\in C^{\infty}(Q^1)$ such that $d \beta=\lambda$, and  
\begin{equation}\label{beta.connected}
|\beta(x)|\leq K_0(|x|^2+1)\mbox{ for all }x\in Q^1.
\end{equation}
\end{itemize}

\subsection{Main results}

We start with two basic lemmas and then state the two main theorems.   

\begin{lemm}\label{area.bounds} For all $\varepsilon$ small, $\underline R$ large, and $T_1>0$, there is  $D=D(T_1,K_0)$ so that
	$$\H^2(B_r(x)\cap L_t)\leq Dr^2 \mbox{ for all $x\in \R^n$, $r>0,$ and $0\leq t\leq T_1.$} $$
\end{lemm}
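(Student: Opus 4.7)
The plan is to apply Huisken's monotonicity formula (Lemma \ref{huisken1}) to bound the Gaussian density at an arbitrary spacetime point and then convert that bound into the desired area ratio estimate. Fix $x_0\in\R^n$, $r>0$, $t_0\in[0,T_1]$ and set $T = t_0 + r^2$; the goal is to control $\int_{L_{t_0}}\Phi(x_0,T)d\H^2$ from above by a constant $C(T_1,K_0)$, which via a lower bound on $\Phi$ on $B_r(x_0)$ immediately yields the claim.

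The first step is to insert $f_t\equiv 1$ into Lemma \ref{huisken1}. Discarding the nonpositive squared term and using $|E|\le 1/\overline R$ gives
$$\frac{d}{dt}\int_{L_t}\Phi(x_0,T)d\H^2 \le \frac{1}{4\overline R^2}\int_{L_t}\Phi(x_0,T)d\H^2,$$
so Gronwall yields
$$\int_{L_{t_0}}\Phi(x_0,T)d\H^2 \le e^{T_1/(4\overline R^2)}\int_{L}\Phi(x_0,T)(\cdot,0)\,d\H^2.$$

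The second step is to bound the initial Gaussian integral using the hypothesis $\H^2(L\cap B_s(x_0))\le K_0 s^2$. Splitting $L$ into $B_{\sqrt T}(x_0)$ and the dyadic annuli $B_{2^{k+1}\sqrt T}(x_0)\setminus B_{2^k\sqrt T}(x_0)$ for $k\ge 0$, bounding $\Phi(x_0,T)(\cdot,0)$ pointwise on each piece, and summing a geometric series gives
$$\int_{L}\Phi(x_0,T)(\cdot,0)\,d\H^2 \le \frac{K_0}{4\pi}\Bigl(1+\sum_{k\ge 0}4^{k+1}e^{-4^k/4}\Bigr)=:C(K_0).$$

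Finally, since $T-t_0=r^2$ and $|y-x_0|\le r$ for $y\in B_r(x_0)$, I have the pointwise bound $\Phi(x_0,T)(y,t_0)\ge e^{-1/4}/(4\pi r^2)$, so combining the three inequalities yields
$$\H^2(L_{t_0}\cap B_r(x_0)) \le 4\pi e^{1/4}\,r^2\int_{L_{t_0}}\Phi(x_0,T)\,d\H^2 \le D(T_1,K_0)\,r^2,$$
with $D=4\pi e^{1/4}e^{T_1/(4\overline R^2)}C(K_0)$, depending only on $T_1$ and $K_0$ once $\overline R$ is large (which is built into the standing hypotheses of Section \ref{setup.geral}). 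There is no real obstacle here: this is a textbook application of Huisken's monotonicity, and the only subtlety is tracking that the ambient-embedding perturbation $E$ of size $O(1/\overline R)$ contributes only a harmless Gronwall factor rather than destroying the monotonicity.
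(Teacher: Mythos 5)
Your proposal is correct and carries out precisely the argument the paper points to: the paper's ``proof'' is only a citation to \cite[Lemma A.3]{neves} together with the remark that the general (non-Euclidean ambient) case follows ``along the same lines provided we use the modification of monotonicity formula given in Lemma \ref{huisken1},'' and your three steps (apply Lemma \ref{huisken1} with $f_t\equiv 1$, absorb the $|E|^2/4$ term by Gronwall, then convert the Gaussian-density bound to an area-ratio bound via the initial area hypothesis and the pointwise lower bound $\Phi(x_0,t_0+r^2)\ge e^{-1/4}/(4\pi r^2)$ on $B_r(x_0)\times\{t_0\}$) are exactly that standard argument made explicit. One small point worth tightening: the standing hypothesis gives $\H^2(L\cap B_s(x))\le K_0 s^2$ only for $x\in M$, while you need it (and state the conclusion) for arbitrary $x_0\in\R^n$; this is harmless, since if $d=\dist(x_0,M)$ then $L\cap B_s(x_0)=\emptyset$ for $s<d$, and for $s\ge d$ choosing $y\in M$ with $|y-x_0|=d$ gives $L\cap B_s(x_0)\subseteq L\cap B_{2s}(y)$, so the bound holds for all $x_0\in\R^n$ with $K_0$ replaced by $4K_0$, which only changes the final constant.
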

\begin{proof}
Assuming  a uniform bound on the second fundamental form of $M$ in $\R^n$, it is a standard fact that uniform area bounds for $L_t$ hold for all  $0\leq t\leq T_1$ (see for instance \cite[Lemma A.3]{neves}  if $g$ is the Euclidean metric. A general proof could be given along the same lines provided we use the modification of monotonicity formula given in Lemma \ref{huisken1}).
\end{proof}

\begin{lemm}\label{large}
For every $\delta$ small, $T_1>1$, and $\underline R>0$, there is $R=R(T_1,\delta, \underline R)$ so that, for every $1\leq t\leq T_1$,  $L_t$ is  $\delta$-close in $C^{2,\alpha}$ to the plane $P_1$  in the annular region $A(R,\overline R).$
\end{lemm}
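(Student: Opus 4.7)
The plan is to apply Huisken's monotonicity formula (Lemma~\ref{huisken1}) together with White's regularity theorem \cite{white} at every point of $P_1 \cap A(R, \overline R)$. The crucial input is that the generating curve $\gamma(\varepsilon, \underline R)$ coincides with the positive real axis $c_1^+$ outside $B_{3\underline R}$, so $L \cap A(3\underline R, 2\overline R) = P_1 \cap A(3\underline R, 2\overline R)$. Thus near any $x_0 \in P_1$ whose norm is much larger than $\underline R$ and much smaller than $\overline R$, the initial data is exactly a plane, and pseudolocality-type estimates should persist up to time $T_1$.

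Concretely, I would fix $x_0 \in P_1$ with $R \leq |x_0| \leq \overline R$, pick any target time $T \in [T_1, T_1+1]$, and bound the initial Gaussian integral by splitting
\[
\int_L \Phi(x_0, T)(\cdot, 0)\, d\H^2 = \int_{L \cap B_{3\underline R}} + \int_{L \cap A(3\underline R, 2\overline R)} + \int_{L \setminus B_{2\overline R}}.
\]
The middle piece is dominated by $\int_{P_1} \Phi(x_0, T)(\cdot, 0)\, d\H^2 = 1$. The inner piece is bounded by the area estimate $\H^2(L \cap B_{3\underline R}) \leq D(T_1, K_0)(3\underline R)^2$ from Lemma~\ref{area.bounds} together with the Gaussian factor $\exp(-(R - 3\underline R)^2/(4T))$, and tends to $0$ as $R \to \infty$. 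The outer piece is controlled similarly using $|x - x_0| \geq \overline R$ and the uniform area bound for $L$, and tends to $0$ as $\overline R \to \infty$. Hence one chooses $R = R(T_1, \delta, \underline R)$ first, and then $\overline R$ larger still, so that this initial integral is at most $1 + \eta$ for any prescribed $\eta > 0$. Applying Lemma~\ref{huisken1} with $f_t \equiv 1$ on $[0, T]$, discarding the nonpositive squared-norm term, and absorbing the residual $|E|^2/4 \leq 1/(4\overline R^2)$ term by Gronwall's inequality then yields
\[
\int_{L_t} \Phi(x_0, T)\, d\H^2 \leq (1+\eta)\exp\bigl(T/(4\overline R^2)\bigr) \leq 1 + 2\eta
\]
uniformly in $t \in [0, T_1]$, provided $\overline R$ is large. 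This is a Gaussian density bound with center time $T$ and scale $\sqrt{T - t} \geq 1$ for $t \leq T_1$. Choosing $T = t_0 + 1$ for each target time $t_0 \in [1, T_1]$ and applying White's regularity theorem at $(x_0, T)$ (whose smallness threshold for the density is absolute, and which returns $C^{2, \alpha}$ estimates on a parabolic neighborhood of $(x_0, T)$ containing $(x_0, t_0)$) gives uniform $C^{2, \alpha}$ control of $L_{t_0}$ on a ball around $x_0$. The approximating plane must be $P_1$ itself, by continuity from $t = 0$ where $L$ agrees with $P_1$ near $x_0$. Taking $\eta$ small yields $\delta$-closeness, and a finite cover of $P_1 \cap A(R, \overline R)$ by such balls concludes the argument.

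The main obstacle is technical bookkeeping rather than conceptual: one must propagate the Gaussian density bound through Lemma~\ref{huisken1} while keeping the $E$-induced errors purely in terms of $\overline R$, and convert the one-scale density bound into uniform $C^{2, \alpha}$ control on the long interval $[1, T_1]$ by varying the center time $T$. The order of quantifiers is delicate but standard: $R$ is chosen first depending only on $T_1, \delta, \underline R$, and $\overline R$ is then taken large enough to absorb the $O(1/\overline R^2)$ Gronwall error together with the tail contributions from $L \setminus B_{2\overline R}$.
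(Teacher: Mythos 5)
Your proposal is correct and runs the same underlying machinery as the paper: the modified Huisken monotonicity formula (Lemma~\ref{huisken1}) to propagate a near-unit initial Gaussian density forward in time, followed by White's regularity theorem \cite{white} to convert the density bound into $C^{2,\alpha}$ control. The paper, however, compresses all of this into a single appeal to Lemma~\ref{ap2} (applied with $\nu=\delta$, $S=1$, $\kappa=1/T_1$), noting that once $R$ is large enough the ball around any $x_0\in L_0\cap A(R,\overline R)$ relevant to Lemma~\ref{ap2} meets only the planar part $L_0\cap A(3\underline R,2\overline R)=P_1\cap A(3\underline R,2\overline R)$, so that conditions a), b), c) of that lemma hold; Lemma~\ref{ap2} is itself proved by a soft compactness/contradiction argument rather than by the explicit tail estimate you give. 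Your inner/middle/outer decomposition of $\int_L\Phi(x_0,T)\,d\H^2$ together with the Gronwall step is a quantitative re-derivation of the content of Lemma~\ref{ap2}\,i), and it makes the dependence on $R$ versus $\overline R$ more explicit. One place to tighten: to invoke White's theorem at $(x_0,t_0)$ one needs the density bound $\int_{L_t}\Phi(y,l)\,d\H^2\le 1+\varepsilon_0$ for all centers $y$ in a neighborhood of $x_0$ and a range of scales $l$ (this is precisely what Lemma~\ref{ap2}\,i) supplies, for $y\in B_{(S+1)\sqrt T}(x_0)$ and $l\le 2T$), not just for the single pair $(x_0, t_0+1)$; your decomposition applies verbatim to nearby centers $y$ and to all $T\in(t_0, t_0+1]$, so this is a presentational rather than a substantive gap, but it should be spelled out. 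Also, ``uniformly in $t\in[0,T_1]$'' should read $t<T$, since $\Phi(x_0,T)$ is only defined for $t<T$ and $T=t_0+1$ may be less than $T_1$.
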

\begin{proof}
Apply Lemma \ref{ap2} with $\nu$ being $\delta$ given in this lemma, $S=1$, and  $\kappa=1/T_1$. Because $L_0\cap A(3\underline R,2\overline R)=P_1\cap A(3\underline R,2\overline R)$, it is simple to see that conditions a), b), and c) of Lemma \ref{ap2} are satisfied for all $x_0\in L_0\cap A(R,\overline R)$ provided we  choose $R$ suitably large. Thus, the desired result follows from Lemma \ref{ap2} ii).
\end{proof}

The next theorem is one of the main results of this section. The proof will be  given at the end  of Section \ref{geral} and can be skipped on a first reading.

\begin{thm}\label{connected}
	  Fix $\nu$.  The constant $\Lambda_0$ mentioned below is universal.
	
	There are $\varepsilon_1$ and $R_1$,  depending on the planes $P_1$, $P_2,$ $P_3$, $K_0$, and $\nu$, such that  if $\varepsilon\leq \varepsilon_1$ and $\underline R\geq 2R_1$ in  \eqref{defi2.geral}, then
	\begin{itemize}
	\item[i)]
	for every $0\leq t\leq 2$, the $C^{2,\alpha}(A(R_1, \overline R))$ norm of $L_t$ is bounded by $\Lambda_0 t^{-1/2}$ and
	$$F_t(x)\in A(R_1, \overline R) \implies |F_s(x)-x|< \Lambda_0\sqrt s\mbox{ for all } 0\leq s\leq 2;$$
	\item[ii)] for every $1\leq t\leq 2$, 	$L_t\cap A(R_1, \overline R)$ is $\nu$-close in $C^{2,\alpha}$ to $L$. 
  \end{itemize}

	Moreover, setting
	$$Q_{1,t}=F_t(Q^1\cap B_{\underline R})\quad\mbox{and}\quad Q_{2,t}=F_t(Q^2\cap B_{\underline R}),$$
	we have  that
	\begin{itemize}
	\item[iii)] for every $0\leq t\leq 2$ 
	$$L_t\cap B_{\underline R-\Lambda_0}\subseteq Q_{1,t}\cup Q_{2,t}\subseteq B_{\underline R+\Lambda_0};$$
	\item[iv)]  for every $1\leq t\leq 2$, $Q_{2,t}$ is $\nu$-close in $C^{2,\alpha}(B_{R_1})$ to $P_3$.
	\end{itemize}
\end{thm}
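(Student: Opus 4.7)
The proof combines Huisken's monotonicity formula (Lemma~\ref{huisken1}) with White's $\varepsilon$-regularity theorem. The essential geometric input is that on the annulus $A(1, 2\overline R)$ the surface $L$ equals the union of the three transversal planes $P_1, P_2, P_3$, which meet only at the origin, while the curved portion of $\gamma(\varepsilon, \underline R)$ is confined to $B_{2\underline R}$.

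\emph{Parts (i) and (ii).} Fix $x_0 \in L_t \cap A(R_1, \overline R)$ with $t \in (0,2]$. Among the three planes, only the one closest to $x_0$ contributes density near $1$ in the backward Gaussian $\Phi(x_0, t+\delta)$ evaluated at $L_0$; the other two, separated from $x_0$ by a fixed angular gap depending on $P_1, P_2, P_3$, together contribute at most $e^{-cR_1^2/t}$, which is below White's regularity threshold once $R_1$ is chosen large. The uniform area bound (Lemma~\ref{area.bounds}) and the hypothesis $|E| \leq 1/\overline R$ control the remaining error terms in Lemma~\ref{huisken1}. White's theorem then yields the scale-invariant $C^{2,\alpha}$ bound $\Lambda_0 t^{-1/2}$ with $\Lambda_0$ universal, and integrating $|H| \lesssim \Lambda_0 s^{-1/2}$ via Cauchy--Schwarz along the mean curvature trajectory gives $|F_s(x)-x|\leq \Lambda_0\sqrt{s}$. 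Part (ii) follows by feeding this density bound together with the initial condition $L_0 \cap A(R_1, \overline R) \subseteq P_1\cup P_2\cup P_3$ into Lemma~\ref{ap2}: the natural parabolic rescaling absorbs the time-zero gap so that by time $1$ the surface is $\nu$-close in $C^{2,\alpha}$ to $L$.

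\emph{Parts (iii) and (iv).} Part (iv) is the analogous argument applied to $Q^2 = P_3 \cap B_{2\underline R}$, a single planar sheet, giving $\nu$-closeness of $Q_{2,t}$ to $P_3$ in $C^{2,\alpha}(B_{R_1})$ for $t \in [1,2]$. Part (iii) uses the trajectory estimate from (i) together with continuity in $t$: any point of $L_t \cap B_{\underline R - \Lambda_0}$ is the image of a point in $L_0 \cap B_{\underline R} \subseteq Q^1 \cup Q^2$, and the angular separation between $Q^1$ and $Q^2$ keeps their images $Q_{1,t}$ and $Q_{2,t}$ disjoint since $\Lambda_0 \sqrt{t} \leq \Lambda_0 \sqrt{2}$ is small compared to that separation for $\varepsilon$ small and $\underline R$ large. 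The inclusion $Q_{1,t}\cup Q_{2,t}\subseteq B_{\underline R+\Lambda_0}$ is again the trajectory estimate.

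\emph{Main obstacle.} The delicate step is securing the Gaussian-density control uniformly in $t \in (0,2]$ and in the position of $x_0$, with $\Lambda_0$ genuinely universal. One must choose $R_1$ large enough that the backward heat kernel centred at any $x_0 \in A(R_1, \overline R)$ suppresses the two ``other'' planes at all parabolic scales up to $\sqrt{2}$, while simultaneously picking $\underline R$ large so that the curved middle piece of $\gamma(\varepsilon, \underline R)$ lying inside $B_1$ contributes negligibly to densities based in $A(R_1, \overline R)$. The careful balancing of $R_1, \underline R$, and $\varepsilon$ to achieve this is the main technical content; once the density bound is in place, the four conclusions are largely standard consequences of White's theorem and the monotonicity formula.
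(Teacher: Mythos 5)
Your proposal is correct and follows essentially the same route as the paper: Gaussian density control from Huisken's monotonicity formula feeds White's regularity theorem to produce the scale-invariant $C^{2,\alpha}$ and trajectory bounds (packaged in the paper as Lemmas~\ref{ap1} and~\ref{ap2}), from which parts (ii)--(iv) are extracted exactly as you outline. The paper obtains the density bounds by a compactness argument in Lemma~\ref{ap2} rather than your explicit Gaussian-tail suppression of the two ``other'' planes, and it is slightly more careful about the transition region $A(\underline R, 3\underline R)$, where $L_0$ is only $C^{2,\alpha}$-close to planes (with area ratios near one) and not literally a subset of $P_1\cup P_2\cup P_3$ as you state; but these are differences of packaging, not of substance.
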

	
\begin{rmk}\label{rmk.connected}
{\bf (1)}  We remark that Theorem \ref{connected} i) and iii) have no $\nu$ dependence in their statements and so could have been stated independently of Theorem ii) and iv).

{\bf (2)}  The content of Theorem \ref{connected} i) and ii) is that for all $\varepsilon$ small and $\underline R$ large we have good control of $L_t$ on an annular region $A(R_1,\overline R)$ for all $t\leq 2$. This is expected because, as  we explain next, for all $\epsilon$ small and $\underline R$ sufficiently large,  $L\cap A(1,2\overline R)$ has small $C^{2,\alpha}$-norm  and area ratios close to one. In the region $A(1,\underline R)$ this follows because, as  defined in   \eqref{defi2.geral}, 
$$L\cap A(1,\underline R)=(P_1\cup P_2\cup P_3)\cap A(1,\underline R).$$ 
 In the region $A(\underline R, 2\overline R)$ this follows because the   $C^{2,\alpha}$ norm  and the area ratios of $L\cap A(\underline R, 2\overline R)$ tend to zero as $\underline R$ tends to infinity. 

{\bf (3)} The content of Theorem \ref{connected} iii) is that  $L_t\cap B_{\underline R}$ has two distinct connected components for all $0\leq t\leq 2$ which we call $Q_{1,t}$ and $Q_{2,t}$. The idea is that initially $L\cap B_{\underline R}$ has two connected components and because we have control of the flow on the annulus $A(R_1,\underline R)$ due to Theorem \ref{connected} i), then no connected component in $L_t\cap B_{\underline R}$ can be ``lost'' or ``gained''.  Without the control on the annular region it is simple to construct examples where a solution to mean curvature flow in $B_1(0)$ consists initially of disjoint straight lines and at a later time is a single connected component.

{\bf (4)} Theorem \ref{connected} iv) is also expected because $Q_{2,0}$ initially is just a disc and we have good control on $\partial Q_{2,t}$ for all $0\leq t\leq 2$.

\end{rmk}

 The next theorem collects some important properties of $Q_{1,t}$. The proof will be  given at the end  of Section \ref{geral} and, because it is largely standard, can be skipped on a first reading.
 
 \begin{thm}\label{annulus.estimates}There are  $D_1$, $R_2,$ and $\varepsilon_2$ depending only on $K_0$ so that if  $\underline R\geq R_2$ and  $\varepsilon\leq \varepsilon_2$ in \eqref{defi2.geral}, then for every $0\leq t\leq 2$ the following properties hold.
 \begin{enumerate}
 
\item[i)]  $$\sup_{0\leq t\leq 2}\sup_{Q_{1,t}} |\theta_t|\leq \pi/2-1/(2K_0).$$
\item[ii)] \begin{equation*}\label{bb}
\H^2(\hat B_{r}(y))\geq D_1 r^2,
\end{equation*}
where $\hat B_{r}(y)$ denotes the intrinsic ball of radius $r$ in $Q_{1,t}$ centered at $y \in Q_{1,t}$ and $r<\dist(y,\partial Q_{1,t}).$
\item[iii)]All $Q_{1,t}$ are exact and one can choose $\beta_t \in C^{\infty}(Q_{1,t})$ with $$d\beta_t=\lambda=\sum_{i=1}^2 x_idy_i-y_idx_i$$ and
$$\frac{d}{dt}(\beta_t+2t\theta_t)=\Delta(\beta_t+2t\theta_t)+E_1,$$
where $E_1=\sum_{i=1}^2\nabla _{e_i}\lambda (e_i)$ and $\{e_1, e_2\}$ is an orthonormal basis for  $Q_{1,t}.$ 
\item[iv)] \begin{equation*}
	|\beta_t|(x)\leq D_1(|x|^2+1)\quad\mbox{for every }x\in Q_{1,t}.
	\end{equation*}
\item[v)] If $\mu=x_1y_2-x_2y_1$, then $$\frac{d\mu^2}{dt} \leq \Delta \mu^2-2|\nabla \mu|^2+E_2,$$
where $E_2=(|x|^3+1)O(1/\overline R).$

\end{enumerate}
 \end{thm}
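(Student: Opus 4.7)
The strategy for all five items is to derive the appropriate evolution equation along Lagrangian mean curvature flow in the nearly-Euclidean Calabi--Yau ambient described in Section \ref{ambient.space}, and then combine it with the boundary and $C^{2,\alpha}$ control on $Q_{1,t}$ provided by Theorem \ref{connected}. The parameter $1/\overline R$ quantifies every deviation of the relevant equations from their flat-$\R^4$ Euclidean form, and is what will allow a maximum-principle argument to upgrade the strict inequalities in the hypotheses on $L$ to the slightly weaker ones appearing in the conclusions.

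For (i), the Lagrangian angle obeys a $1/\overline R$-perturbation of Smoczyk's identity $(\partial_t-\Delta)\theta_t = 0$ for Lagrangian MCF on a Ricci-flat K\"ahler manifold. Applying the parabolic maximum principle on $Q_{1,t}$ is legitimate because $\partial Q_{1,t}$ lies in the annulus $A(R_1,\overline R)$ by Theorem \ref{connected} iii), and there $L_t$ is a $\nu$-small $C^{2,\alpha}$ perturbation of the planes $P_1\cup P_2$ by Theorem \ref{connected} ii); the boundary values of $\theta_t$ are therefore within $o(1)$ of the constant angles of $P_1,P_2$. Coupled with $\sup_{Q^1}|\theta|\leq \pi/2-K_0^{-1}$ from Section \ref{hyp.lagrangian}, this upgrades to $\sup_{Q_{1,t}}|\theta_t|\leq \pi/2-1/(2K_0)$ for $\overline R$ large. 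Item (ii) is an interior density lower bound: for $y$ with $r<\dist(y,\partial Q_{1,t})$, apply Huisken's monotonicity (Lemma \ref{huisken1}) backwards from $(y,t+\sigma)$, $\sigma\sim r^2$, using that the Gaussian density at an interior smooth point is at least $1$; the distance restriction avoids any cutoff or boundary term.

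For (iii), exactness of $\lambda|_{Q_{1,t}}$ is preserved because $Q^1$ is exact by assumption and Lagrangian MCF in an exact Calabi--Yau deforms by (infinitesimal) Hamiltonian isotopy. A continuous choice of primitive $\beta_t$ is made by prescribing its value at a single boundary point and extending. From $\partial_t(F_t^*\lambda) = F_t^*(d\,\iota_H\lambda + 2\iota_H\omega)$ together with the Lagrangian identities $JH=\nabla\theta_t$ and $F_t^*\iota_H\omega = d\theta_t$, one obtains $\partial_t\beta_t = \Delta\beta_t - 2\theta_t + E_1$, with the $E_1$ of the statement being exactly the correction coming from the non-parallel complex and calibration structures and from the isometric-embedding defect $E$. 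Combining with $(\partial_t-\Delta)\theta_t = O(1/\overline R)$ then yields $(\partial_t-\Delta)(\beta_t+2t\theta_t)=E_1$, as claimed. For (iv), set $u_t := \beta_t+2t\theta_t$, so $(\partial_t-\Delta)u_t=E_1$. Initially, $|u_0|\leq K_0(|x|^2+1)$ by \eqref{beta.connected}. On $\partial Q_{1,t}\subseteq A(R_1,\overline R)$, Theorem \ref{connected} ii) forces $|\beta_t|\leq C(|x|^2+1)$, while (i) controls $|\theta_t|$. Since $(\partial_t-\Delta)|x|^2 = -4+O(1/\overline R)$ on $L_t$, the function $A(|x|^2+1)+Bt$ is a super-solution for the equation for $u_t$, and the parabolic maximum principle yields $|u_t|\leq D_1(|x|^2+1)$; subtracting $2t\theta_t$ and using (i) transfers the bound to $\beta_t$ alone.

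For (v), for any $f$ on $\R^n$ the moving-surface heat operator satisfies $(\partial_t-\Delta_{L_t})f = -\mathrm{tr}_{TL_t}\hess f + E\cdot\nabla f$, the last term collecting the ambient correction to MCF. For $\mu = x_1y_2-x_2y_1$ on $\R^4$ the constant symmetric matrix $\hess\mu$ satisfies $J^{\top}\hess\mu\,J = \hess\mu$ (a direct computation), so $\hess\mu$ has the same trace on any Lagrangian $2$-plane as on its $J$-image; since $\mathrm{tr}_{\R^4}\hess\mu=0$, this forces $\mathrm{tr}_{TL_t}\hess\mu=0$. Thus $(\partial_t-\Delta)\mu = O(|E|\,|\nabla\mu|) = O(|x|/\overline R)$, and $(\partial_t-\Delta)\mu^2 = -2|\nabla\mu|^2 + 2\mu(\partial_t-\Delta)\mu$; using $|\mu|\leq |x|^2$ gives the stated $E_2 = (|x|^3+1)O(1/\overline R)$. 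The \emph{main technical obstacle} is (iii): one must maintain exactness as a time-dependent statement on a surface with moving boundary, and identify the error $E_1$ in the exact form stated, tracking the Darboux chart, the K\"ahler and calibration forms, and the embedding $G$ simultaneously on $L_t$. Once that evolution equation is in hand, items (i), (iv) and (v) reduce to standard maximum-principle arguments, and (ii) is a routine density estimate.
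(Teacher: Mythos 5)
Your outline for items (i), (iii) and (v) is sound and close to the paper's argument; (v) is in fact a cleaner derivation (the invariance $J^{\top}\mathrm{Hess}\,\mu\,J=\mathrm{Hess}\,\mu$ together with $\mathrm{tr}_{\R^4}\mathrm{Hess}\,\mu=0$ immediately forces $\mathrm{tr}_{TL_t}\mathrm{Hess}\,\mu=0$ on any Lagrangian plane, whereas the paper works with projected gradients). For (iv), the super-solution idea is fine, but the claim that ``Theorem~\ref{connected}~ii) forces $|\beta_t|\leq C(|x|^2+1)$ on $\partial Q_{1,t}$'' is not immediate: $\beta_t$ is the \emph{particular} primitive defined by integrating $\partial_t\beta_t=H\lrcorner\lambda-2\theta_t$ from $\beta_0=\beta$, so its boundary values are governed by the time-integral of $H\lrcorner\lambda-2\theta_t$ along flow lines (which is what the paper controls, using Theorem~\ref{connected}~i) and part~(i)), not by $C^{2,\alpha}$-closeness of $L_t$ to $L$. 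That is a small gap and is easily repaired by inserting the paper's integration step.

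The substantive gap is in (ii). Huisken's monotonicity applied at a smooth interior point gives lower bounds on Gaussian density ratios $\int_{Q_{1,t}}\Phi(y,\sigma)\,d\H^2$, i.e.\ on the mass of $Q_{1,t}$ in an \emph{extrinsic} ball around $y$, but this says nothing directly about the area of the \emph{intrinsic} ball $\hat B_r(y)$. Without a curvature bound there is no way to compare the two, and $Q_{1,t}$ has no a priori second fundamental form bound in the interior. In fact the non-collapsing statement (ii) is genuinely geometric rather than parabolic: it relies crucially on the almost-calibrated property proved in part~(i). The paper deduces from it the isoperimetric inequality $\big(\H^2(B)\big)^{1/2}\leq C\,\mathrm{length}(\partial B)$ of \cite[Lemma 7.1]{neves}, and then obtains the density bound by integrating the ODE $\psi'(r)\geq C^{-1}\psi(r)^{1/2}$ for $\psi(r)=\H^2(\hat B_r(y))$. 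Your monotonicity-formula route cannot reach (ii); you should replace it with this isoperimetric argument.
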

 \begin{rmk}
{\bf (1)} We comment on Theorem \ref{annulus.estimates} i). Recall that we are assuming  $\sup_{Q^1} |\theta|\leq \pi/2-K_0^{-1}$, where  $Q^1$ is defined in \eqref{defi.geral}. Because $\theta_t$ evolves by the heat equation we have
 $$\sup_{0\leq t\leq 2}\sup_{Q_{1,t}}|\theta_t|\leq \max\left\{\sup_{Q_{1,0}}|\theta|,\sup_{0\leq t\leq 2}\sup_{\partial Q_{1,t}}|\theta_t|\right\}.$$
 Hence we need to control the Lagrangian angle along $\partial Q_{1,t}$ in order to obtain Theorem \ref{annulus.estimates} i). The idea  is to use the fact that $Q^1$ is  very ``flat'' near $\partial Q_{1,0}$ to show that $F_t(Q^1)$ is a small $C^1$ perturbation of $Q^1$ near $\partial Q_{1,0}$, which means the Lagrangian angle along $\partial Q_{1,t}$ will not change much.

{\bf (2)}  Theorem  \ref{annulus.estimates} ii) is a consequence of the fact that $Q_{1,t}$ is almost-calibrated.

{\bf (3)}  Theorem  \ref{annulus.estimates} iii) and v) are just derivations of evolution equations having into account the error term one obtains from the metric $g_R$ (defined in Section \ref{prelim}) not being Euclidean.

{\bf (4)} Theorem \ref{annulus.estimates} iv) gives the expected growth for $\beta_t$ on $Q_{1,t}$ and its proof is a simple technical matter.
 \end{rmk}

 \subsection{Abstract results}
 
 We derive some simple results which will be used to prove Theorem \ref{connected}, Theorem \ref{annulus.estimates}, and throughout the rest of the paper. They are presented in  a fairly general setting in order to be used in various circumstances. The proofs are based on White's Regularity Theorem and Huisken's monotonicity formula.
 
Let $E$ be a vector valued function defined on $\R^{n}\times G(2,n)$, $\Sigma$ a smooth surface possibly with boundary, and $F_t:\Sigma\longrightarrow \R^n$ a smooth solution to 
\begin{equation}\label{flow.abstract}
\frac{dF_t}{dt}(x)=H(F_t(x))+E(F_t(x), T_{F_t(x)}M_t),
\end{equation}
 where $M_t=F_t(\Sigma)$ and $F_0$ is the identity map. 
 
 In what follows $\Omega$ denotes a closed   set of $\R^{n}$ and  we use the notation
$$ \Omega(s)=\{x\in\R^{n}\,|\,\dist(x,\Omega)< s\}.$$

 We derive two lemmas which are well known among the experts. Denote $\bar E=\sup |E|_{0,\alpha}$ and let $\varepsilon_0$ be the constant given by White's Regularity Theorem \cite[Theorem 4.1]{white}.
\begin{lemm}\label{ap1}
Assume $T\leq 4$. There  is $\Lambda=\Lambda(\bar E, n)$ so that for every $s\geq 0$ if
\begin{itemize}
\item[a)] for all $0\leq t\leq 2T$, $y\in \Omega (s+2\Lambda\sqrt T)$, and $l\leq 2T$
$$\int_{M_t}\Phi(y,l)d\H^2\leq 1+\varepsilon_0;$$
\item[b)]for all $0\leq t\leq 2T$, $\partial M_t \cap \Omega (s+2\Lambda\sqrt T)=\emptyset;$  
\end{itemize}

then for every $0\leq t\leq T$  we have
\begin{itemize}
\item[i)] the $C^{2,\alpha}$ norm of $M_t$ on $\Omega(s+\Lambda\sqrt T)$ is bounded by $\Lambda/\sqrt{t}$;
\item[ii)]
$$F_t'(x)\in \Omega(s)\implies |F_t(x)-x|<\Lambda\sqrt t \mbox{ for all }0\leq t\leq T.$$
\end{itemize}
\end{lemm}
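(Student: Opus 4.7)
My approach combines White's Regularity Theorem with a continuity bootstrap on the individual trajectories $t \mapsto F_t(x)$.

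For part (i), the plan is to apply White's Regularity Theorem in its version for mean curvature flow with a bounded lower-order term. Hypothesis (a) provides Gaussian density ratios below $1+\varepsilon_0$ throughout the thickened tube $\Omega(s+2\Lambda\sqrt T)$ at every scale $l \leq 2T$, while hypothesis (b) keeps $\partial M_t$ clear of this tube. Under these conditions, White's theorem \cite[Theorem 4.1]{white} yields a pointwise bound on the $C^{2,\alpha}$ norm of $M_t$ at any point of $M_t \cap \Omega(s+\Lambda\sqrt T)$ of order $\Lambda/\sqrt t$. The error $E$ is absorbed because White's argument is stable under perturbation of the flow equation \eqref{flow.abstract} by a term with bounded $C^{0,\alpha}$ norm, and $\bar E$ is finite; the resulting constant depends only on $\bar E$ and $n$. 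The thickening by $2\Lambda\sqrt T$ (as opposed to $\Lambda\sqrt T$) in the hypothesis is exactly what allows one to use White's theorem at parabolic balls of radius of order $\sqrt T$ centered at points in $\Omega(s+\Lambda\sqrt T)$.

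For part (ii), the curvature bound from (i) immediately produces a speed bound $|dF_t/dt| \leq |H| + |E| \leq \Lambda/\sqrt t + \bar E$ at any point of $M_t \cap \Omega(s+\Lambda\sqrt T)$. Integrating and using $T \leq 4$ to control $\bar E\, t$ by a multiple of $\sqrt t$ gives a displacement estimate of the desired shape, \emph{provided} the whole trajectory $\{F_\tau(x) : 0 \leq \tau \leq t\}$ stays inside the tube where the speed bound applies. To secure this, I would run a continuity argument: fix $x$ with $F_t(x) \in \Omega(s)$ and let $t_0 \leq t$ be the supremum of times in $[0,t]$ for which $F_{t_0}(x)$ lies outside $\Omega(s+\Lambda\sqrt T)$, setting $t_0=0$ if no such time exists. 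On $[t_0,t]$ the trajectory lies in the tube, so the speed bound integrates to
$$|F_t(x)-F_{t_0}(x)| \leq \int_{t_0}^{t}\Bigl(\tfrac{\Lambda}{\sqrt \tau}+\bar E\Bigr)\,d\tau \leq (2\Lambda+2\bar E)\sqrt t.$$
If $t_0>0$ then $F_{t_0}(x)$ sits on $\partial\Omega(s+\Lambda\sqrt T)$ while $F_t(x)\in\Omega(s)$, forcing $|F_t(x)-F_{t_0}(x)|\geq \Lambda\sqrt T \geq \Lambda\sqrt t$, which contradicts the displayed bound once $\Lambda$ is chosen strictly larger than $2\Lambda+2\bar E$ in the relevant comparison (after first fixing the White constant, one simply enlarges $\Lambda$ to get a clean absolute inequality). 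Hence $t_0=0$ and the estimate closes.

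The main obstacle I anticipate is the technical verification that White's theorem applies uniformly in the presence of the lower-order term $E$ and that the output constant depends only on $\bar E$ and $n$; once this is granted the rest is bookkeeping. The architecture of the lemma—\textbf{thick} density hypothesis versus \textbf{thin} conclusion region—is precisely engineered so that a trajectory starting in $\Omega(s)$ has room to wander by $\Lambda\sqrt T$ without leaving the controlled region, which is what makes the continuity bootstrap work without further geometric input.
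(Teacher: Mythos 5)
Your proposal follows essentially the same route as the paper: part (i) is a direct application of White's Regularity Theorem, with the thickening by $2\Lambda\sqrt T$ providing the room White's theorem needs, and part (ii) integrates the resulting speed bound $K_1/\sqrt t + \bar E$ and closes it with a continuity bootstrap. The paper runs the bootstrap forward from a time $t'$ at which $F_{t'}(x)$ lies in a slightly larger tube (and then enlarges $\Lambda = \max\{K_1, K_2\}$ at the end), while you run it backward from $F_t(x)$; this is cosmetic. Two small points worth tightening. First, you correctly flag that the displayed comparison ``$\Lambda > 2\Lambda + 2\bar E$'' is impossible as written: as the paper does, one should give White's constant a separate name $K_1$, derive the integrated displacement constant $K_2(K_1,\bar E)$, and only then set $\Lambda := \max\{K_1,K_2\}$, so that the tube width $\Lambda\sqrt T$ dominates the possible displacement $< K_2\sqrt t$. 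Second, statement (ii) is used later (see the proof of Theorem~\ref{connected}) with the reading that $F_{t'}(x)\in\Omega(s)$ at \emph{some} time $t'$ forces $|F_t(x)-x|<\Lambda\sqrt t$ for \emph{every} $t\in[0,T]$, not only for $t=t'$; your backward-only argument yields the bound on $[0,t']$ but leaves $(t',T]$ uncontrolled. The fix is immediate: once the backward step gives $x=F_0(x)$ safely in the interior of $\Omega(s+\Lambda\sqrt T)$ (in fact in $\Omega(s+K_2\sqrt T)$), a matching forward integration of the same speed bound keeps the whole trajectory inside the tube up to time $T$ and produces the displacement estimate for all $t$.
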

\begin{rmk} 
The content of Lemma \ref{ap2} is that if we know the Gaussian density ratios at a scale smaller than $2T$ in a region $U$ are all close to one and $\partial M_t$ lies outside $U$ for all $t\leq 2T$, then we have good control of $M_t$ for all $0\leq t\leq T$ on  a slightly smaller region. The proof is a simple consequence of White's Regularity Theorem.
\end{rmk}
\begin{proof}
	Assume  for all $0\leq t\leq 2T$, $y\in \Omega(s+(\Lambda+1)\sqrt T)$, and $l\leq 2T$
$$\int_{M_t}\Phi(y,l)d\H^2\leq 1+\varepsilon_0 $$
where $\Lambda\geq 1$ is a constant to be chosen later.  	From White's Regularity Theorem   \cite[Theorem 4.1]{white} there is $K_1=K_1(\bar E,n)$ so that the $C^{2,\alpha}$ norm of $M_t$ on $\Omega(s+\Lambda\sqrt T)$ is bounded by $K_1/\sqrt{t}$  and 
	$$\sup_{M_t\cap \Omega(s+\Lambda\sqrt T)}|A|^2\leq \frac{K_1}{t}$$
	for every $t\leq T$. Thus we obtain from \eqref{flow.abstract} 
	$$\left|\frac{dF_t}{dt}(x)\right|\leq \frac{K_1}{\sqrt t}+\bar E $$
	whenever $F_t(x)\in \Omega(s+\Lambda\sqrt T)$. Integrating the above inequality and using $T\leq 2$, we have  the existence of $K_2=K_2(\bar E,K_1)$ so that  if
	\begin{equation*}\label{implica}
	F_{t'}(x)\in\Omega(s+(\Lambda-K_2)\sqrt T)\mbox{ for some } 0\leq t'\leq T,
	\end{equation*}
	then
	$$
		F_t(x)\in \Omega(s+\Lambda\sqrt T) \quad\mbox{for every} \quad0\leq t\leq T
	$$ 
	and $$|F_t(x)-x|< K_2\sqrt t \quad\mbox{for every}\quad 0\leq t\leq T.$$
	Choose $\Lambda=\max\{K_1,K_2\}$. Then i) and  ii) follow at once. 
	\end{proof}

\begin{lemm}\label{ap2}
 For every $\nu$, $S,$ and $0<\kappa<1$, there is $\delta$, $R$ so that if $x_0\in M_0$ and
\begin{enumerate}
\item[a)]the $C^{2,\alpha}$ norm of $M_0$  in $B_{R\sqrt T}(x_0)$ and the $C^{0,\alpha}(\R^{n}\times G(2,M))$ norm of $E$ are smaller than $\delta/\sqrt T$;
\item[b)]$\H^2(M_0\cap B_r(x_0))\leq 7\pi r^2$ for all  $0\leq r\leq R\sqrt T$;
\item[c)]$\partial M_t\cap B_{R\sqrt T}(x_0)=\emptyset$ for all $0\leq t\leq T$;
\end{enumerate}
  then  the following hold:
\begin{itemize}
\item[i)] $$\int_{M_t}\Phi(y,l)d\H^2\leq 1+\varepsilon_0\quad\mbox{for all } y\in B_{(S+1)\sqrt T}(x_0), \mbox{ $t\leq T$, and }l\leq 2T;$$
\item[ii)] For every $\kappa T\leq t\leq T$ there is a function $$u_t:T_{x_0} M_0\cap B_{(S+1)\sqrt T}(x_0)\longrightarrow (T_{x_0} M_0)^{\bot}$$ with 
$$\sup_{T_{x_0} M_0\cap B_{(S+1)\sqrt T}} \left(|u_t| /\sqrt T+|\nabla u_t|+|\nabla^2 u_t|_{0,\alpha} \sqrt T\right)\leq \nu $$
and 
$$M_t\cap B_{S\sqrt T}(x_0)\subseteq \{u_t(x)+x,|\, x\in T_{x_0} M_0\cap B_{(S+1)\sqrt T}(x_0)\}.$$
\end{itemize}
\end{lemm}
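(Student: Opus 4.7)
By parabolic scaling (replacing $F_t$ by $\sqrt{T}^{-1}F_{Tt}$ and $E$ by $\sqrt{T}\,E$) I may assume $T=1$, so the hypotheses become: $M_0$ is a $\delta$-small $C^{2,\alpha}$-graph over the affine plane $P:=x_0+T_{x_0}M_0$ inside $B_R(x_0)$, $|E|_{0,\alpha}\leq\delta$, area ratios of $M_0$ in $B_R(x_0)$ are bounded by $7\pi$, and $\partial M_t\cap B_R(x_0)=\emptyset$ for $t\leq 1$. The plan is to establish the Gaussian density bound (i) first by Huisken's monotonicity with a spatial cutoff, and then upgrade to the graphical representation (ii) by combining (i) with Lemma \ref{ap1} and a compactness-contradiction argument.

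For (i), fix $y\in B_{S+1}(x_0)$ and $l\leq 2$. Since $M_0$ is a $\delta$-small $C^{2,\alpha}$-graph over $P$ in $B_R(x_0)$ and satisfies the area-ratio bound outside, I would show that
$$\int_{M_0}\Phi(y,l)\,d\H^2\leq\int_{P}\Phi(y,l)\,d\H^2+O(\delta)+O(e^{-cR^2})=1+o(1)\leq 1+\varepsilon_0/2,$$
by taking $\delta$ small and $R$ large; the exterior tail is controlled by Gaussian decay against the area ratio bound. Choose a cutoff $\eta$ equal to $1$ on $B_{R/2}(x_0)$ and vanishing outside $B_R(x_0)$ and apply Lemma \ref{huisken1} with $f_t=\eta$; the boundary terms vanish by hypothesis (c), giving
$$\frac{d}{dt}\int_{M_t}\eta\,\Phi(y,l)\,d\H^2\leq\int_{M_t}|\Delta\eta|\,\Phi\,d\H^2+\frac{1}{4}\int_{M_t}\eta\,|E|^2\Phi\,d\H^2.$$
The first term is $O(e^{-cR^2})$ because $\nabla\eta$ is supported where $|\cdot-y|\gtrsim R$ and one has crude area bounds for $M_t$ as in Lemma \ref{area.bounds}; the second is $O(\delta^2)$. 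Integrating over $t\in[0,1]$ gives $\int_{M_t}\Phi(y,l)\,d\H^2\leq 1+\varepsilon_0$, proving (i).

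With (i) in hand, Lemma \ref{ap1} applied with $\Omega=\{x_0\}$ and $s=S+1$ yields a uniform $C^{2,\alpha}$ bound $\Lambda/\sqrt{t}$ for $M_t$ on a slight enlargement of $B_{S+1}(x_0)$, together with $|F_t(x)-x|<\Lambda\sqrt t$. To convert this into the smallness $\nu$ required by (ii), I would argue by contradiction: if the conclusion failed, there would be sequences $\delta_k\to 0$, $R_k\to\infty$ and times $t_k\in[\kappa,1]$ admitting no $\nu$-small graphical representation. After translating $x_0^{(k)}$ to the origin and rotating so that $T_0M_0^{(k)}$ equals a fixed plane $P$, the initial data $M_0^{(k)}$ converges smoothly on compact sets to $P$ and $E^{(k)}\to 0$ in $C^{0,\alpha}$. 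Parabolic compactness, applied in graphical form (the $C^{2,\alpha}$ bound from Lemma \ref{ap1} together with the displacement bound means $M_t^{(k)}$ stays a graph over $P$ on $B_{S+1}(0)\times[\kappa/2,1]$ and satisfies a uniformly parabolic quasilinear system), extracts a subsequence converging in $C^{2,\alpha'}_{\mathrm{loc}}$ to a smooth solution of true mean curvature flow with initial datum $P$; uniqueness forces the limit to be the static plane $P$, and $C^{2,\alpha}$-interpolation delivers the desired smallness on a subsequence, the contradiction. The main obstacle is precisely this upgrade: White's regularity and Lemma \ref{ap1} only output bounds of order $1/\sqrt t$, which after rescaling is unit-size rather than the small $\nu$ demanded by (ii); the compactness step is what turns "initial data near a plane plus $E$ small" into quantitative smallness of the evolved graph, and its ingredients (uniform curvature bounds, $E^{(k)}\to 0$, smooth limit = static plane) all rely on the density bound (i) having already been established.
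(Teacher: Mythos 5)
Your proposal is correct and follows essentially the same route as the paper: a compactness–contradiction argument in which $\delta_k\to 0$, $R_k\to\infty$ forces the initial data to a multiplicity-one plane, the Gaussian density ratios to tend to one, and White's regularity (equivalently Lemma~\ref{ap1}) then upgrades this to $C^{2,\alpha}_{loc}$-convergence of $M^{(k)}_t$ to the static plane on $[\kappa,1]$. The only presentational difference is that you establish part (i) by a direct cutoff computation in Huisken's monotonicity, whereas the paper folds this into the same compactness argument; both hinge on the identical tail estimate and on identifying the Brakke limit with the plane.
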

\begin{rmk}
This lemma, roughly speaking,  says that  for every $S$, there is $R$ so that if the initial condition is very close to a disc in $B_{R\sqrt T}(x_0)$ (condition a) and b)) and  $\partial M_t$ lies outside $B_{R\sqrt T}(x_0)$ for all $0\leq t\leq T$ (condition c)), then we get good control of $M_t$ inside $B_{S\sqrt T}(x_0).$
\end{rmk}
\begin{proof}
It suffices to prove this for $T=1$ and $x_0=0$.  Consider a sequence of flows  $(M^i_t)_{0\leq t\leq 1}$ satisfying all the hypothesis with $\delta_i$ converging to zero and $R_i$ tending to infinity.  The sequence of flows $(M^i_t)_{t\geq 0}$ will converge weakly to $(\bar M_t)_{t\geq 0}$, a weak solution to mean curvature flow (see \cite[Section 7.1]{ilmanen1}). The fact that the $C_{loc}^{2,\alpha}$ norm of $M^i_0$ converges to zero implies that $M^i_0$ converges in $C^{2,\alpha}_{loc}$ to a union of planes. From b) we conclude that $M^i_0$ converges to a multiplicity one plane $P$. Because $\partial M^i_t$ lies
 outside $B_{R_i}$ for all $0\leq t\leq 1$ with $R_i$ tending to infinity and 
 $$\lim_{i\to\infty}\int_{M^i_0}\Phi(y,l)d\H^2=\int_{P}\Phi(y,l)d\H^2=1\mbox{ for every $y$ and $l$},$$
we can still conclude from Huisken's monotonicity formula  that for all $i$ sufficiently large
$$ \int_{M^i_t}\Phi(y,l)d\H^2\leq 1+\varepsilon_0\quad\mbox{for all } y\in B_{S+1}, \mbox{ $t\leq 1$, and }l\leq 2.$$
This proves i). Moreover, the above inequality also implies,  via White's Regularity Theorem,  that $M^i_t$ converges in $C^{2,\alpha}_{loc}$ to $P$ for all $\kappa\leq t\leq 1$ and so ii) will also hold for all $i$ sufficiently large. This implies the desired result.
\end{proof}

\subsection{Proof of Theorem \ref{connected} and Theorem \ref{annulus.estimates}}

\begin{proof}[Proof of Theorem \ref{connected}]

We first prove part ii). Consider $\delta$ and $R$ given by Lemma \ref{ap2} when $\kappa=1/2,$ $\nu$ is the constant fixed in Theorem \ref{connected},  and $S$ is large to be chosen later. The same reasoning used in  Remark \ref{rmk.connected} 2) shows the existence of $K_1$ (depending on $R$ and $\delta$) so that  for all $\epsilon$ small and $\underline R$ sufficiently large, the $C^{2,\alpha}$ norm of  $L\cap A(K_1, 2\overline R-K_1)$ is smaller than $\delta/2$, and the area ratios  with scale smaller than $ 2 R$ are close to one. Thus, after relabelling $K_1$ to be $K_1-\sqrt 2 R$ we can apply Lemma \ref{ap2} ii) (with $T=4$) to  $M_0=L$ for all $x_0$ in $\Omega=L\cap A(K_1, 2\overline R-K_1)$ and conclude Theorem \ref{connected} ii). Moreover, we also conclude from Lemma \ref{ap2} i) that
\begin{equation}\label{ober}
\int_{L_t}\Phi(y,l)d\H^2\leq 1+\varepsilon_0\quad\mbox{for all } y\in \Omega(S), \mbox{ $t\leq 4$, and }l\leq 4,
\end{equation}
where $\Omega(S)$ denotes the tubular neighbourhood of $\Omega$ in $\R^n$ with radius $S$.

We now prove part i). From \eqref{ober}, we  see that  hypothesis a) and b) of Lemma \ref{ap1} are satisfied with $T=2$,  $s=0,$ and $r=S-2^{3/2}\Lambda$ (which we assume to be positive). Hence Lemma \ref{ap1} i) gives that the $C^{2,\alpha}$ norm of $L_t$ in $\Omega(S-2^{1/2}\Lambda)$ is bounded by $\Lambda/\sqrt t$. Theorem \ref{connected} i) follows from this   provided 
\begin{equation}\label{inclusion2.connected}
L_t\cap A(K_1, 2\overline R -K_1)\subset \Omega(S-2^{3/2}\Lambda).
\end{equation}
This inclusion follows  because, according to Brakke's Clearing Out Lemma \cite[Section 12.2]{ilmanen1} (which can be easily extended to our setting assuming  small $C^{0,\alpha}$ norm of $|E|$), there is a universal constant $S_0$ such that 
$$L_t\cap A(K_1, 2\overline R-K_1)\subset \Omega(S_0)\mbox{ for all }0\leq t\leq 2.$$
Thus we simply need to require  $S-2^{3/2}\Lambda>S_0$ in order to obtain \eqref{inclusion2.connected}.
Furthermore, Lemma \ref{ap1} ii) implies 
\begin{equation*}
F_t(x)\in \Omega(S-2^{3/2}\Lambda) \implies |F_s(x)-x|< \Lambda s^{1/2}\mbox{ for all } 0\leq s\leq 2
\end{equation*}
which combined with \eqref{inclusion2.connected} gives
\begin{equation}\label{inclusion.connected}
F_t(x)\in A(K_1, 2\overline R-K_1) \implies |F_s(x)-x|<\Lambda s^{1/2}\mbox{ for all } 0\leq s\leq 2
\end{equation}
and this proves the second statement of Theorem \ref{connected} i). 

We now prove the first statement of Theorem \ref{connected} iii). Suppose 
$$L_{t'}\cap B_{\underline R-\sqrt 2\Lambda}\nsubseteq F_{t'}(L\cap B_{\underline R})=Q_{1,t'}\cup Q_{2,t'},$$
meaning $F_{t'}(x)\in B_{\underline R-\sqrt 2\Lambda}$ but $x \notin B_{\underline R}$. By continuity there is $0\leq t\leq t'$ so that 
$$F_t(x)\in A(K_1, \underline R)$$
and this implies from \eqref{inclusion2.connected} that $F_t(x)\in \Omega(S- 2^{3/2}\Lambda)$, in which case we conclude from \eqref{inclusion.connected} that $|F_{t'}(x)-x|<\sqrt 2\Lambda$, a contradiction. Similar reasoning shows the other inclusion in Theorem \ref{connected} iii).

Finally we show iv). Apply Lemma \ref{ap2} with   $S=K_1/\sqrt 2$, $\kappa=1/2$, and $\nu$  the constant fixed in this theorem, to  $M_0=Q_{2,0}=P_3\cap B_{\underline R}$ where $x_0=0$. Note that hypothesis a) and b) of Lemma \ref{ap2} are satisfied with $T=2$ if one assumes $\underline R$ sufficiently large. Moreover, hypothesis c) is also satisfied because due to Theorem \ref{connected} i) we have  $\partial Q_{2,t}\subset A(\underline R-2\Lambda_0, \underline R+2\Lambda_0)$. Thus $Q_{2,t}$ is $\nu$-close in $C^{2,\alpha}(B_{K_1})$ to $P_3$ for every $1\leq t\leq 2$. 
\end{proof}

 \begin{proof}[Proof of Theorem \ref{annulus.estimates}]
 During this proof we will use Theorem \ref{connected} i) and iii) with $\nu=1$. $\Lambda_0$  is the constant given by that theorem.

 From the maximum principle applied to $\theta_t$ we know that
$$\sup_{0\leq t\leq 2}\sup_{Q_{1,t}}|\theta_t|\leq \max\left\{\sup_{Q_{1,0}}|\theta|,\sup_{0\leq t\leq 2}\sup_{\partial Q_{1,t}}|\theta_t|\right\}.$$
The goal now is to control  the $C^1$ norm of $Q_{1,t}$ along $\partial Q_{1,t}$ so that we control $\sup_{\partial Q_{1,t}}|\theta_t|$.

Given $\eta$ small, consider $R$ and $\delta$ given by Lemma \ref{ap2} when $\nu=\eta$, $S=2\Lambda_0$, and $\kappa=1/2$.  We have $$\partial Q_{1,0}=Q^1\cap\{|x|=\underline R\},$$ where $Q^1$ is defined in \eqref{defi.geral}. Thus, for all $\underline R$ sufficiently large and $\varepsilon$ small,  we have that $M_0=Q^1$  satisfies hypothesis a) and b) of Lemma \ref{ap2} for every $x_0\in \partial Q_{1,0}$. Moreover $\partial( F_t(Q^1))\cap B_{\overline R}=\emptyset$   by Theorem \ref{connected} i), and so hypothesis c) is also satisfied  because we are assuming $\overline R\geq 4\underline R$ (see \eqref{defi2.geral}).

This means $F_t(Q^1)\cap B_{2\Lambda_0\sqrt t}(x_0)$ is graphical over $T_{x_0}Q^1$ with  $C^1$ norm being smaller than $\eta$ for all $0\leq t\leq 2$. Thus we can choose $\eta$ small so that 
\begin{equation}\label{angle.geral} 
\sup\{|\theta_t(y)-\theta_0(x_0)| :  y\in F_t(Q^1)\cap B_{2\Lambda_0\sqrt t}(x_0)\}\leq 1/(2K_0).
\end{equation}
Using Theorem \ref{connected} i) we see that  for every $y\in \partial Q_{1,t}$ there is $x_0\in \partial Q_{1,0}$ so that $y\in B_{2\Lambda_0\sqrt t}(x_0)$. Thus we obtain from \eqref{angle.geral}
$$
\sup_{\partial Q_{1,t}}|\theta_t|\leq \sup_{\partial Q_{1,0}}|\theta|+1/(2K_0)
$$
 and  this implies i) because we are assuming $\sup_{Q^1} |\theta|\leq \pi/2-K_0^{-1}$.

 We now prove ii). Assume for a moment that the metric $g_R$ (defined in Section \ref{prelim}) is Euclidean in $B_{2\overline R}$. Because $Q_{1,t}$ is almost-calibrated we have from \cite[Lemma 7.1]{neves} the existence of a constant $C$ depending only  $K_0$  so that,  for every open set   $B$ in $Q_{1,t}$ with rectifiable boundary,
\begin{equation*}
	\left(\H^2(B)\right)^{1/2}\leq C\,\mbox{length}\,(\partial B).
\end{equation*}
It is easy  to recognize  the same is true (for some slightly larger $C$) if $g_R$ is very close to the Euclidean metric.
Set
$$\psi(r)=\H^2\left(\hat{B}_{r}(x)\right)$$ which has, for almost all $r<\dist(y,\partial Q_{1,t})$, derivative given by
$$\psi^{\prime}(r)=\mbox{length}\,\left(\partial \hat{B}_{r}(x)\right)\geq C^{-1}(\psi(r))^{1/2}.$$
Hence, integration implies that for some other constant $C$, 
$\psi(r)\geq C r^2$ and so ii) is proven.

We now prove iii). The Lie derivative of $\lambda_t=F^*_t (\lambda)$ is given by 
$$\L_H \lambda_t=dF_t^*(H\lrcorner\lambda)+F_t^*(H\lrcorner 2\omega)=d(F_t^*(H\lrcorner\lambda)-2\theta_t)$$ 
and so we can find $\beta_t\in C^{\infty}(Q_{1,t})$ with $d\beta_t=\lambda$ and 
\begin{equation}\label{derivative.connected}
\frac{d\beta_t}{dt}=H\lrcorner\lambda-2\theta_t.
\end{equation}
A simple computation shows that $\Delta \beta_t=H\lrcorner\lambda +\sum_{i=1}^2\nabla _{e_i}\lambda (e_i)$ and this proves iii).

We now prove iv). Combining  Theorem \ref{connected}  i) and \eqref{derivative.connected} we have that 
$$\left|\frac{d\beta_t}{dt}(F_t(x))\right|\leq \frac{\Lambda_0}{\sqrt t}|F_t(x)|+\pi-K_0^{-1}$$
for every $ x \in L\cap A(R_1+2\Lambda_0, \underline R)$. Thus after integration in the $t$ variable, assuming $t\leq 2$, and recalling  \eqref{beta.connected}, we obtain a constant $C=C(K_0, \Lambda_0)$ such that
$$|\beta_t(F_t(x))|\leq C(|F_t(x)|+|\beta(x))|+C\leq C(|F_t(x)|^2+1).$$
We are left to estimate $\beta_t$ on $A_t=F_t(Q_{1,0}\cap B_{R_1+2\Lambda_0})$. From Theorem \ref{connected} i) we know that $A_t\subseteq B_{C_1}(0)$ for some $C_1=C_1(K_0, \Lambda_0, R_1)$ and thus, provided we assume  $g_R$ to be sufficiently close to the Euclidean metric,
$$|\nabla \beta_t (x)|=|\lambda|\leq 2C_1 \quad\mbox{for every }x\in A_t.$$
Hence, if we fix $x_1$ in  $\partial A_t$, we can find $C=C(K_0,\Lambda_0, R_1)$ so that  for  every $y$ in $A_t$
$$
|\beta_t(y)| \leq  |\beta_t(x_1)|+C\dist_{A_t}(x_1,y)\leq C(1+\dist_{A_t}(x_1,y)),
$$
where $\dist_{A_t}$ denotes the intrinsic distance in $A_t$. Property ii) of this theorem,  $A_t\subseteq B_{C_1}(0)$, and Lemma \ref{area.bounds}  are enough to bound uniformly the intrinsic diameter of $A_t$  and thus bound   $\beta_t$ uniformly on $A_t$. Hence iv) is proven.

We now prove v). In what follows $E^j_2$ denotes any term with decay $(|x|^j+1)O(1/\overline R)$. Given a coordinate function $v=x_i$ or $y_i$, $i=1,2$, we have
$$\frac{d v}{dt}=\Delta v-\sum_{i=1}^2 g(\nabla_{e_i} V,e_i)=\Delta v+E_2^0,$$
where $V$ denotes the gradient of $v$ with respect to $g_R$.
Thus
$$\frac{d \mu}{dt}=\Delta \mu+E_2^1-2g_R(X_1^{\top},Y^{\top}_2)+2g_R(Y^{\top}_1,X^{\top}_2),$$
where $X_i, Y_i, i=1,2$ denote the gradient of the coordinate functions with respect to $g_R$. If the ambient Calabi-Yau structure were Euclidean, then
\begin{multline*}
\langle X_1^{\top},Y_2^{\top} \rangle-\langle Y_1^{\top},X_2^{\top} \rangle=-\langle (J Y_1)^{\top},Y_2 \rangle-\langle Y_1^{\top},X_2 \rangle\\
=-\langle J Y_1^{\bot},Y_2 \rangle-\langle Y_1^{\top}, X_2 \rangle=-\langle Y_1^{\bot}+Y_1^{\top},X_2 \rangle=-\langle Y_1,X_2\rangle=0.
\end{multline*}
In general, it is easy to see that $g_R(X_1^{\top},Y^{\top}_2)-g_R(Y^{\top}_1,X^{\top}_2)=E_2^0$ and so 
$$\frac{d\mu^2}{dt} \leq \Delta \mu^2-2|\nabla \mu|^2+E_2^3.$$
\end{proof}

\section{Second Step: Self-expanders}\label{self-expanders}
	 The goal of this section is to prove the theorem below.  For the reader's convenience, we recall that the planes $P_1,P_2$ are defined in Section \ref{prelim.n},  $K_0$ is defined at the beginning of Section \ref{geral}, $Q^1$ is defined in  \eqref{defi.geral}, and $Q_{1,t}$ is defined in Theorem \ref{connected} iii). The self-expander equation is defined in Section \ref{prelim.s} and the self-expander $\mathcal{S}$ is defined in \eqref{Q} (see Figure \ref{modelo-se}).
	 
\begin{thm}\label{proximity.final}
Fix $S_0$ and $\nu$. There are $\varepsilon_3$ and $R_3,$ depending on $S_0,$ $\nu$, and $K_0,$ such that if  $\underline R\geq R_3,$ $\varepsilon\leq \varepsilon_3$ in \eqref{defi2.geral}, and
 $$\mbox{the flow }Q_{1,t}\mbox{ exists smoothly for all }0\leq t\leq 2,$$
  then $ t^{-1/2} Q_{1,t}$ is $\nu$-close in $C^{2,\alpha}(B_{S_0})$ to $\mathcal{S}$ for every $1\leq t\leq 2$.   
\end{thm}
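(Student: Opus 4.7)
I would argue by contradiction plus compactness. Assume the theorem fails: there are sequences $\varepsilon_i\to 0$, $\underline R_i\to \infty$, Lagrangians $L^i$ satisfying the standing hypotheses of Section~\ref{geral} and admitting a smooth flow $Q_{1,t}^i$ on $[0,2]$, together with times $t_i\in[1,2]$ for which $t_i^{-1/2}Q_{1,t_i}^i$ is bounded away from $\mathcal{S}$ in $C^{2,\alpha}(B_{S_0})$. By Theorem~\ref{connected} i)--iii) the flows $Q_{1,t}^i$ have uniform $C^{2,\alpha}$ bounds on annuli $A(R_1,\overline R_i)$ with area-ratio bounds (Lemma~\ref{area.bounds}), and by Theorem~\ref{annulus.estimates} i)--iv) they are uniformly almost-calibrated with quadratic control of the potentials $\beta_t^i$. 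Since $\overline R_i\to \infty$ the ambient error term $E$ tends to zero in $C^{0,\alpha}_{\mathrm{loc}}$, so the flows are effectively Euclidean on each fixed compact set.

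With these ingredients in hand, Huisken's monotonicity formula (Lemma~\ref{huisken1}) together with White's regularity theorem provide Gaussian density bounds and uniform curvature estimates on compact subsets of $\R^4\times(0,2]$. Extracting a diagonal subsequence, I obtain a smooth Lagrangian MCF $(\tilde Q_t)_{0<t\leq 2}$ in Euclidean $\R^4$ which is almost-calibrated, admits a potential $\tilde\beta_t$ with $|\tilde\beta_t|\leq C(|x|^2+1)$, and has $\tilde Q_t\to P_1+P_2$ as a Radon measure as $t\downarrow 0$ (this last point uses that $L^i\cap A(\varepsilon_i,\underline R_i)=(P_1\cup P_2)\cap A(\varepsilon_i,\underline R_i)$ together with the area-ratio bounds).

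The decisive step is then to conclude $\tilde Q_t=\sqrt{t}\,\mathcal{S}$ for $0<t\leq 2$. The almost-calibrated property and the quadratic bound on $\tilde\beta_t$ should, via the evolution equation $\partial_t(\tilde\beta_t+2t\tilde\theta_t)=\Delta(\tilde\beta_t+2t\tilde\theta_t)$ of Theorem~\ref{annulus.estimates} iii) combined with the backward heat kernel in Lemma~\ref{huisken1}, force the blow-down of $\tilde Q_t$ at the origin to coincide with $\tilde Q_t$ itself, i.e.\ the flow must be self-similar. (This is precisely the mechanism alluded to in Remark~\ref{pro.total.rmk} and, I expect, carried out in Propositions~\ref{monotone} and~\ref{proximity_se}, where it is crucial that $P_1+P_2$ is \emph{not} area-minimizing so that the self-expander sits strictly below $P_1+P_2$ in area.) Once self-similarity is established, the uniqueness of the smooth almost-calibrated Lagrangian self-expander asymptotic to $P_1+P_2$ (Anciaux's $\mathcal{S}$) identifies $\tilde Q_t=\sqrt{t}\mathcal{S}$. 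Passing to a further subsequence with $t_i\to t_\infty\in[1,2]$ then gives $t_i^{-1/2}Q_{1,t_i}^i\to \mathcal{S}$ in $C^{2,\alpha}(B_{S_0})$, contradicting the choice of the sequence.

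The main obstacle is the self-similarity/uniqueness step in the last paragraph. Producing a smooth limit and controlling it on annuli is fairly standard given the machinery of Section~\ref{geral}, but ruling out non-self-similar limits asymptotic to $P_1+P_2$ is subtle: it depends essentially on the almost-calibrated condition $|\tilde\theta_t|\leq \pi/2-1/(2K_0)$, the quadratic growth of $\tilde\beta_t$, and a rigidity analysis of the monotonicity identity at the origin together with strict minimization below $P_1+P_2$.
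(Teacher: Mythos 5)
Your high-level strategy (argue by contradiction, extract a Brakke limit asymptotic to $P_1+P_2$, show it must be a smooth self-expander, identify it with $\mathcal S$) is indeed the one the paper follows, and you correctly flag that the non-area-minimizing property of $P_1+P_2$ and the smooth existence hypothesis are the crucial inputs. However, there is a genuine gap at exactly the point you call ``the main obstacle'': you assert that ``producing a smooth limit and controlling it on annuli is fairly standard given the machinery of Section \ref{geral},'' but this is not the case. After passing to a Brakke limit one only has an integral Brakke motion $(\bar Q_t)$ with $\bar Q_t\to P_1+P_2$ as $t\downarrow 0$; the whole difficulty is that $\bar Q_t$ could a priori simply \emph{be} the stationary varifold $P_1+P_2$ for all $t$, which is a weak self-expander asymptotic to $P_1+P_2$ but is neither smooth nor close to $\mathcal S$. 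The paper closes this with two non-standard lemmas: Lemma \ref{stationary} shows $\bar Q_1$ is not stationary by exploiting that $Q^i_1\cap B_{r_1}$ is \emph{connected} (a fact that uses smooth existence of $(Q^i_t)$ on $[0,1]$ — if a singularity had occurred one could end up with two transverse discs) together with \cite[Proposition A.1]{neves} and the fact that $P_1$ and $P_2$ have different Lagrangian angles, i.e.\ the non-minimizing assumption. Lemma \ref{small} then bounds the Gaussian density ratios of $\bar Q_1$ strictly below $2$ at every scale, which, via the rigidity in Huisken's monotonicity together with the non-stationarity just obtained, forces the densities to be close to $1$ and hence (by White's regularity theorem) that $\bar Q_t$ is smooth and embedded and the convergence upgrades to $C^{2,\alpha}_{\mathrm{loc}}$. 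Your sketch does not supply either mechanism, and without them the contradiction does not go through.

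A second, smaller issue: identifying the smooth self-expander with $\mathcal S$ does not follow directly from ``uniqueness of the smooth almost-calibrated Lagrangian self-expander asymptotic to $P_1+P_2$.'' Anciaux's uniqueness holds among $SO(2)$-equivariant self-expanders, i.e.\ those contained in $\mu^{-1}(0)$ with $\mu=x_1y_2-x_2y_1$. The paper must first prove equivariance: it applies the evolution inequality for $\mu^2$ from Theorem \ref{annulus.estimates} v) in Huisken's monotonicity formula, uses that the asymptotic cone $P_1+P_2$ lies in $\mu^{-1}(0)$, and concludes $Q\subset\mu^{-1}(0)$; only then does Lemma \ref{ap3} reduce to a curve and Anciaux's result apply. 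That step is missing from your outline.
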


As we will see shortly, this theorem  follows from Theorem \ref{proximity.total} below. Recall that, as seen in Theorem \ref{annulus.estimates} iii), we can find $\beta_t$ on $Q_{1,t}$  so that
$$d\beta_t=\lambda=\sum_{i=1}^2 x_idy_i-y_idx_i.$$

 \begin{thm}\label{proximity.total} Fix $S_0$ and $\nu$. 
 
 There are $\varepsilon_4$, $R_4,$ and $\delta$ depending on $S_0,$ $\nu$, and $K_0,$ such that if  $\underline R\geq R_4,$ $\varepsilon\leq \varepsilon_4$ in \eqref{defi2.geral}, and
 \begin{itemize}
 \item $\mbox{the flow }Q_{1,t}\mbox{ exists smoothly for all }0\leq t\leq 2;$
 \item \begin{equation}\label{beta.proximity.total}
\int_{Q^1\cap B_{\underline R}}\beta^2\exp(-|x|^2/8)d\H^2\leq \delta;
\end{equation}
 \end{itemize}
 then $ t^{-1/2} Q_{1,t}$ is $\nu$-close in $C^{2,\alpha}(B_{S_0})$ to a smooth embedded self-expander asymptotic to $P_1$ and $P_2$ for every $1\leq t\leq 2$.   
\end{thm}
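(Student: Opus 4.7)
The plan is to argue by contradiction via compactness. Suppose the statement fails, so there exist $\nu, S_0 > 0$ and a sequence of flows $(Q_{1,t}^{(i)})_{0\le t\le 2}$ satisfying the hypotheses with $\varepsilon_i\to 0$, $\underline R_i\to\infty$, $\delta_i\to 0$, together with times $t_i\in[1,2]$ such that $t_i^{-1/2} Q_{1,t_i}^{(i)}$ is not $\nu$-close in $C^{2,\alpha}(B_{S_0})$ to any smooth embedded self-expander asymptotic to $P_1+P_2$.

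The key quantity is $u_t := \beta_t + 2t\theta_t$. By Theorem~\ref{annulus.estimates} iii), $u_t$ evolves by a perturbed heat equation $(\partial_t-\Delta)u_t = E_1$ with $|E_1|$ of order $1/\overline R$. Take $\Phi = \Phi(0,2)$, whose time-zero profile is exactly $e^{-|x|^2/8}/(8\pi)$. Apply Huisken's monotonicity (Lemma~\ref{huisken1}) with $f_t = \chi^2 u_t^2$, where $\chi$ is a spatial cutoff equal to $1$ on $B_{\underline R/2}$ and vanishing outside $B_{\underline R}$. Using $\partial_t u_t^2-\Delta u_t^2 = 2u_t E_1 - 2|\nabla u_t|^2$, the quadratic growth $|\beta_t|\le D_1(|x|^2+1)$ from Theorem~\ref{annulus.estimates} iv), and the area bound of Lemma~\ref{area.bounds}, and discarding the non-positive gradient and $|\bar H+\cdots|^2$ terms, one obtains for every $0\le t\le 2-\kappa$ (with $\kappa>0$ fixed but arbitrarily small) a bound of the form
\begin{equation*}
\int_{Q_{1,t}^{(i)}} \chi^2 u_t^2\,\Phi\,d\mathcal{H}^2 \;\le\; \int_{Q^1\cap B_{\underline R}}\beta^2\,\Phi|_{t=0}\,d\mathcal{H}^2 + C(\kappa)\bigl(\overline R_i^{-1}+\underline R_i^{-1}\bigr).
\end{equation*}
By hypothesis \eqref{beta.proximity.total} the first term is at most $\delta_i/(8\pi)$, so the right-hand side tends to $0$ as $i\to\infty$.

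Using Theorem~\ref{connected} and Theorem~\ref{annulus.estimates}, we have uniform $C^{2,\alpha}$-estimates for $Q_{1,t}^{(i)}$ on annular regions, uniform area ratios, and uniform control of $\theta_t^{(i)}, \beta_t^{(i)}$. Together with the assumed smoothness of $(Q_{1,t}^{(i)})_{0\le t\le 2}$ and standard compactness for smooth Lagrangian mean curvature flows, a subsequence converges, in Brakke sense globally and in $C^{2,\alpha}_{\mathrm{loc}}$ for $t>0$, to a limit flow $(\tilde Q_{1,t})_{0<t\le 2}$ whose initial varifold is $P_1+P_2$ (since $Q^1\setminus B_{\varepsilon_i}=(P_1\cup P_2)\cap A(\varepsilon_i,\underline R_i)$). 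Passing the weighted inequality to the limit yields $\tilde u_t \equiv 0$ on each $\tilde Q_{1,t}$ for $t\in(0,2)$, extended to $t=2$ by continuity.

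To finish, $d\tilde\beta_t=\lambda$ gives $\nabla\tilde\beta_t=(Jx)^\top$, and the Lagrangian identity $\tilde H=J\nabla\tilde\theta_t$ combined with $\nabla\tilde\theta_t=-(Jx)^\top/(2t)$ (from $\tilde u_t\equiv 0$) yields $\tilde H = -\tfrac{1}{2t}J(Jx)^\top = x^\perp/(2t)$, using the identity $J(Jx)^\top=-x^\perp$ valid on any Lagrangian. Hence $\tilde Q_{1,t} = \sqrt t\,\Sigma$ for a fixed smooth embedded self-expander $\Sigma$ asymptotic to $P_1+P_2$; combined with $C^{2,\alpha}_{\mathrm{loc}}$-convergence on compact sets, this contradicts the choice of the original sequence.

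The principal obstacle is controlling the cutoff contributions in the weighted monotonicity against the quadratic growth of $\beta_t$. Terms arising from $\nabla\chi\cdot\nabla u_t^2$ involve $|\beta_t|^2\sim|x|^4$ on the support of $|\nabla\chi|\subset\{|x|\sim\underline R_i\}$, and one must verify these are absorbed by the Gaussian tail of $\Phi$ and vanish as $\underline R_i\to\infty$. A secondary difficulty is extracting a subsequential limit with sufficient regularity as $t\to 0^+$, which is handled by applying White's regularity via Lemma~\ref{ap2} at scales localized away from the origin, where the initial data consists of two transverse planes.
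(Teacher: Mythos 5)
There is a genuine gap in the final step. Your argument correctly reproduces the paper's first ingredient (Proposition~\ref{monotone}): the Huisken monotonicity with $u_t = \beta_t + 2t\theta_t$, the cutoff, the quadratic growth control from Theorem~\ref{annulus.estimates} iv), and the conclusion that $\int u_t^2\Phi\to 0$ along a bad subsequence. But the conclusion ``Hence $\tilde Q_{1,t}=\sqrt t\,\Sigma$ for a fixed \emph{smooth embedded} self-expander $\Sigma$, combined with $C^{2,\alpha}_{\mathrm{loc}}$-convergence on compact sets'' is asserted, not proved, and it is precisely the crux of the theorem. What your limiting argument gives you is a Brakke flow $(\tilde Q_t)_{0<t\le 2}$ satisfying $H = x^\perp/(2t)$ as varifolds, with $\tilde Q_t\to P_1+P_2$ as $t\to0^+$. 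This does \emph{not} imply $\tilde Q_1$ is smooth or embedded: the stationary varifold $P_1+P_2$ itself is a valid (singular) self-expander, and without additional input the limit could just be $P_1+P_2$ for all $t$, or a smooth self-expander glued to a transverse intersection. In that case there is no $C^{2,\alpha}_{\mathrm{loc}}$ convergence and no contradiction with the bad sequence. The paper's Remark~\ref{pro.total.rmk} (3)--(4) explicitly flags that the statement is \emph{false} without both the smoothness of the flow on $[0,2]$ and the non--area-minimizing property of $P_1+P_2$; your proposal invokes neither of these hypotheses.

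The paper closes this gap in Proposition~\ref{proximity_se} via two lemmas you have omitted. Lemma~\ref{stationary} shows $\tilde Q_1$ is not stationary: if it were, it would be a cone, and then $\int_{Q^i_1}(|H|^2+|x^\perp|^2)\to0$ forces (via the Poincar\'e-type estimate \cite[Prop.~A.1]{neves}, which uses the connectedness of $Q^i_1\cap B_{r_1}$ — available only because the flow is smooth, so no transverse double point has formed) that $\beta^i_1\to\bar\beta$ and hence $\tilde Q_1$ is a Special Lagrangian cone, contradicting that the Lagrangian angles of $P_1$ and $P_2$ differ. Lemma~\ref{small} then bounds the Gaussian density ratios of $\tilde Q_1$ strictly below $2$ and close to $1$ at small scales: if a density ratio approached $2$, Huisken monotonicity would force $\tilde Q_t$ to be simultaneously a self-shrinker and self-expander, hence stationary, contradicting Lemma~\ref{stationary}. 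Only at this point does White's regularity deliver uniform $C^{2,\alpha}$ bounds for $Q^i_t$, $1\le t\le2$, and hence the $C^{2,\alpha}_{\mathrm{loc}}$ convergence you invoked. These two lemmas are not technical footnotes — they are where the hypotheses ``flow smooth on $[0,2]$'' and ``$P_1+P_2$ not area-minimizing'' actually enter — so your proposal is missing the central idea of the proof.
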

\begin{rmk}\label{pro.total.rmk}
 {\bf (1)}  If the ambient metric $g_R$ (defined in Section \ref{prelim}) were Euclidean, then $|\nabla \beta(x)|=|x^{\bot}|$ and thus $\beta$ would be  constant exactly on cones. Hence, roughly speaking,  the left-hand side of \eqref{beta.proximity.total} measures how close $Q^1\cap B_{\underline R}$ is  to a  cone. 
  
{\bf (2)}  The content of the theorem is that given $\nu$ and $S_0$, there is $\delta$ so that  if the initial condition is  $\delta$-close, in the sense of \eqref{beta.proximity.total}, to a non area-minimizing configuration of two planes $P_1+P_2$ and the flow exists smoothly for all $0\leq t\leq 2$, then the flow will be $\nu$-close to a smooth self-expander in $B_{S_0}$ for all $1\leq t\leq 2$.

{\bf (3)}  The result is false if one removes  the hypothesis that the flow exists smoothly for all $0\leq t\leq 2$. For instance, there are known examples \cite[Theorem 4.1]{neves} where $Q_{1,0}$ is very close to $P_1+P_2$ (see \cite[Figure 1]{neves}) and a finite-time singularity happens for a very short time $T$.  In this case $Q_{1,T}$ can be seen as a transverse intersection of small perturbations of $P_1$ and $P_2$ (see \cite[Figure 2]{neves}) and we could continue the flow past the singularity by flowing each component of $Q_{1,T}$ separately, in which case $Q_{1,1}$ would be very close to $P_1+P_2$ and this is not a {\em smooth} self-expander.  The fact the flow exists smoothly will be crucial  to prove Lemma \ref{stationary}.

{\bf (4)}  The result is also false if $P_1+P_2$ is area-minimizing. The reason is that in this case the self-expander asymptotic to $P_1+ P_2$ is simply $P_1+ P_2$, which is singular at the origin and thus not {\em smooth} as it is guaranteed by Theorem \ref{proximity.total}. The fact that $P_1+ P_2$ is  not area-minimizing will be crucial to prove Lemma \ref{stationary}.
  
{\bf (5)} The strategy to prove Theorem \ref{proximity.total} is the following. The first step (Proposition \ref{monotone}) is to show that if the left-hand side of \eqref{beta.proximity.total} is very small, then
$$\int_{Q_{1,1}\cap B_{\underline R/2}}(\beta_1+2\theta_1)^2\Phi(0,4-t)d\H^2+\int_0^2 \int_{Q_{1,t}}|x^{\bot}-2tH|^2\Phi(0,4-t)d\H^2 dt$$
is also very small. The second step (Proposition \ref{proximity_se}) in the proof will be to show  that if
$$\int_{Q_{1,1}\cap B_{\underline R/2}}(\beta_1+2\theta_1)^2\Phi(0,4-t)d\H^2+\int_0^2 \int_{Q_{1,t}}|x^{\bot}-2tH|^2\Phi(0,4-t)d\H^2 dt$$
is very small, then $t^{-1/2}Q_{1,t}$ will be $\nu$-close in $C^{2,\alpha}(B_{S_0})$ to a smooth self-expander. It is in this step that we use the fact that the flow exists smoothly and $P_1+ P_2$ is not an area-minimizing configuration.
\end{rmk}

\begin{proof}[Proof of Theorem \ref{proximity.final}] The first step is to show that Theorem \ref{proximity.total} can be applied, which amounts to show that \eqref{beta.proximity.total} holds if we choose $\varepsilon$ sufficiently small and $\underline R$ sufficiently large. Thus, we obtain that $ t^{-1/2} Q_{1,t}$ is $\nu$-close in $C^{2,\alpha}(B_{S_0})$ to a smooth embedded self-expander asymptotic to $P_1$ and $P_2$ for every $1\leq t\leq 2$. The second step is to show that  self-expander must be $\mathcal{S}$.
\vskip 0.03in
\noindent{\bf First Step:} We note  $Q^1\cap A(1, \underline R)$ (defined in \eqref{defi.geral}) coincides with $(P_1\cup P_2)\cap  A(1, \underline R)$ and so the uniform control we have on $\beta$ given by \eqref{beta.connected} implies that for all $\delta$ there is $r_1$ large depending on $K_0$ and $\delta$ so that
\begin{equation}\label{llll}
\int_{Q^1 \cap A(r_1, \underline R)}\beta^2\exp(-|x|^2/8)d\H^2\leq \frac{\delta}{2}
\end{equation}
for all $\varepsilon$ small and $\underline R$ large. Also, if we make $\varepsilon$ tend to zero and $\underline R$ tend to infinity in \eqref{defi2.geral}, it is straightforward to see that $Q^1$  tends to $P_1\cup P_2$ smoothly on any compact set which does not contain the origin. Because $\beta$ is constant on cones, we can choose $\beta$ on $Q^1$ so that 
$$
 \lim_{\varepsilon\to 0, \underline R\to \infty} \int_{Q^1\cap B_{r_1}}\beta^2\exp(-|x|^2/8)d\H^2=0.
$$
Combining this with \eqref{llll} we obtain that for all $\varepsilon$ small and $\underline R$ large
$$\int_{Q^1\cap B_{\underline R}}\beta^2\exp(-|x|^2/8)d\H^2\leq \delta.$$
Hence all the hypothesis of Theorem \ref{proximity.total} hold
\vskip 0.03in
\noindent{\bf Second Step:} Let $Q$ denote a smooth embedded Lagrangian self-expander  asymptotic to $P_1+P_2$.
Then $Q_t=\sqrt t Q$ and $\lim_{t\to0^+} Q_t=P_1+P_2$  as Radon measures. Thus, if we recall the function $\mu=x_1y_2-y_1x_2$ defined in Theorem \ref{annulus.estimates}  v), we have 
 \begin{equation}\label{animals}
 \lim_{t\to 0^+}\int_{Q_t}\mu^2\Phi(0,T-t)d\H^2=\int_{P_1+P_2}\mu^2\Phi(0,T)d\H^2=0.
 \end{equation}
 Using the evolution equation  for $\mu$ given in Theorem \ref{annulus.estimates} v) ($E_2$ is identically zero)  into Huisken's Monotonicity Formula (see Lemma \ref{huisken1}) we have
$$\frac{d}{dt}\int_{Q_t}\mu^2\Phi(0,T-t)d\H^2\leq 0.$$
This inequality and \eqref{animals} imply at once that
 $$\int_{Q_t}\mu^2\Phi(0,1)d\H^2=0\mbox{ for all $t\geq 0$}$$
 and so $Q\subset \mu^{-1}(0)$.
 A trivial modification of Lemma \ref{ap3} implies the existence of $\gamma$ asymptotic to $\chi$ (the curve defined in \eqref{Q}) so that
 $$Q=\{(\gamma(s)\cos \alpha, \gamma(s)\sin \alpha)\,|\, s\in \R,\alpha \in S^1\}.$$
 From \cite[Section 5]{anciaux} we know that $\chi=\gamma$ and so the result follows.
\end{proof}

\subsection{Proof of Theorem \ref{proximity.total}}
Throughout this proof we assume that $\underline R$ is sufficiently large and $\varepsilon$ is sufficiently small so that Theorem \ref{connected}  (with $\nu=1$) and Theorem \ref{annulus.estimates} apply. We also assume the flow $(Q_{1,t})_{0\leq t\leq 2}$ exists smoothly.

 For simplicity, denote $Q_{1,t}$ simply by $Q_t$.  We also recall that the constant $K_0$ which will appear multiple times during this proof was defined at the beginning of Section \ref{geral}.

\begin{prop}\label{monotone} Fix $\eta$. 
 
 There are  $\varepsilon_5$ and  $R_5$ depending on $\eta$ and $K_0$ so that if  $\varepsilon\leq \varepsilon_5$ and  $\underline R\geq R_5$  in \eqref{defi2.geral}, then 
\begin{multline*}
	\sup_{0\leq t\leq 2}\int_{Q_{t}\cap B_{\underline R/2}}(\beta_t+2t\theta_t)^2\Phi(0,4-t)d\H^2\\
	+\int_0^2 \int_{Q_{t}\cap B_{\underline R/2}}|x^{\bot}-2tH|^2\Phi(0,4-t)d\H^2 dt\\
	\leq \frac{\eta}{2}+\int_{Q^1\cap B_{\underline R}}\beta^2\Phi(0,4)d\H^2.
\end{multline*}
\end{prop}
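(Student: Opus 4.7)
The proof hinges on a key identity relating $u_t := \beta_t + 2t\theta_t$ and the self-expander quantity $x^\perp - 2tH$, combined with Huisken's monotonicity formula applied to a suitable ambient cutoff.

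\emph{Step 1: key identity.} On a Lagrangian submanifold of $\R^4$ (and up to $O(1/\overline R)$-errors in the non-Euclidean metric $g_R$), one has $\lambda = g(Jx,\cdot)$, so $\nabla\beta_t = (Jx)^T = Jx^\perp$ using that $J$ interchanges tangent and normal spaces. Combined with the Lagrangian-angle identity $\nabla\theta_t = -JH$, this gives $\nabla u_t = J(x^\perp - 2tH)$ and therefore
\[
|\nabla u_t|^2 = |x^\perp - 2tH|^2 + E_3, \qquad E_3 = O(1/\overline R)\bigl(|x|^2 + |H|^2 + 1\bigr).
\]
Together with the evolution equation $\partial_t u_t = \Delta u_t + E_1$ from Theorem \ref{annulus.estimates} iii),
\[
\partial_t u_t^2 - \Delta u_t^2 = -2|x^\perp - 2tH|^2 + 2u_t E_1 + 2E_3.
\]

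\emph{Step 2: cutoff monotonicity.} Fix a radial $\phi\in C_c^\infty(\R^n,[0,1])$ with $\phi\equiv 1$ on $B_{\underline R/2}$, $\mathrm{supp}\,\phi\subset B_{\underline R-2\Lambda_0}$ and $|\nabla^k\phi|\leq C\underline R^{-k}$ for $k=1,2$. By Theorem \ref{connected} i) and iii), $\mathrm{supp}\,\phi$ stays disjoint from $\partial Q_t$ for all $0\leq t\leq 2$, so Lemma \ref{huisken1} applies with $x_0=0$, $T=4$ and $f_t = \phi u_t^2$ without boundary terms. Expanding $\partial_t f_t - \Delta f_t$ (which picks up $u_t^2\nabla\phi\cdot(H+E)$ from the flow moving $\phi$, plus $-u_t^2\Delta^L\phi - 2\nabla^L\phi\cdot\nabla u_t^2$ from the intrinsic Laplacian of $\phi|_{Q_t}$) and discarding the non-positive shrinker quadratic, one obtains
\[
\frac{d}{dt}\int_{Q_t}\phi u_t^2\,\Phi\,d\H^2 \leq -2\int_{Q_t}\phi|x^\perp - 2tH|^2\,\Phi\,d\H^2 + \mathcal{E}(t),
\]
where $\mathcal{E}(t)$ collects the metric errors ($\phi u_t E_1$, $\phi E_3$, $\phi u_t^2|E|^2/4$) and the cutoff terms. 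Integrating in $t\in[0,t^*]\subseteq[0,2]$, using $u_0 = \beta$ and $\phi\leq 1$ for the initial integral, and $\phi\equiv 1$ on $B_{\underline R/2}$ for the left-hand side of the conclusion, reduces the proposition to showing $\int_0^2|\mathcal{E}(t)|\,dt\leq \eta/2$ for $\underline R\geq R_5$ and $\varepsilon\leq\varepsilon_5$.

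\emph{Step 3 and main obstacle.} Controlling $\mathcal{E}(t)$ has three ingredients. (a) The metric errors $|E|,|E_1|,|E_3|$ are all $O(1/\overline R)$ on $\mathrm{supp}\,\phi$; paired against the polynomial bound $|u_t|\leq C(|x|^2+1)$ from Theorem \ref{annulus.estimates} i),iv) and the $C^{2,\alpha}$ estimate $|H|\leq \Lambda_0/\sqrt t$ from Theorem \ref{connected} i), all relevant moments are Gaussian-integrable against $\Phi(0,4-t)$ uniformly in $t\in[0,2]$ (since $4-t\geq 2$), contributing $O(1/\overline R) = O(1/\underline R)$. (b) The cutoff terms are supported in the annulus $A(\underline R/2,\underline R)$, where $\Phi(0,4-t)\leq C\exp(-\underline R^2/128)$; combined with polynomial growth of $u_t$, $|\nabla u_t|$, $|H|$ and the area bound of Lemma \ref{area.bounds}, they are exponentially small in $\underline R$. (c) The cross-term $-2\nabla^L\phi\cdot\nabla u_t^2 = -4u_t\nabla^L\phi\cdot\nabla u_t$ carries $|\nabla u_t|$ without a sign, and is the main technical nuisance: I would split into the region $\{\phi\geq 1/2\}$, where Cauchy--Schwarz absorbs a small fraction of the good term $-2\phi|\nabla u_t|^2\Phi$ against $Cu_t^2|\nabla\phi|^2\Phi$, and the region $\{\phi<1/2\}\subset A(\underline R/2,\underline R)$, which falls under case (b). Together, choosing $\underline R$ sufficiently large (depending on $\eta, K_0$) and $\varepsilon$ small enough to apply Theorems \ref{connected} and \ref{annulus.estimates} yields the desired $\int_0^2|\mathcal{E}|\leq \eta/2$. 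The remaining work --- tracking the coefficients in the integrated Gronwall-type inequality so that the sum $\sup_t A(t) + B(2)$ is bounded precisely as stated (using that the integrated inequality $A(t^*)+2B(t^*)\leq A(0)+O(\eta)$ forces $A$ to be essentially non-increasing, pinning $t_{\max}$ near $0$) --- is routine but technical bookkeeping.
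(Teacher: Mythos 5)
Your proposal follows essentially the same route as the paper: set $\gamma_t=\beta_t+2t\theta_t$, invoke the evolution equation from Theorem \ref{annulus.estimates} iii) and the identity $|\nabla\gamma_t|^2=|x^{\bot}-2tH|^2$, multiply by a radial cutoff supported between $B_{\underline R/2}$ and $B_{\underline R}$, and apply the weighted monotonicity formula of Lemma \ref{huisken1}, controlling the metric errors by $O(1/\overline R)$ and the cutoff terms by the exponential decay of $\Phi(0,4-t)$ in the annulus. The only tactical difference is your Cauchy--Schwarz absorption of the cross term $\nabla\phi\cdot\nabla\gamma_t^2$: the paper instead bounds $|\nabla\gamma_t|$ directly via the pointwise $C^{2,\alpha}$ estimate $|H|\lesssim\Lambda_0/\sqrt t$ from Theorem \ref{connected} i), so that the entire cross term is polynomial in $|x|$ and swallowed by the exponentially small Gaussian weight on the annulus, making the split over $\{\phi\ge 1/2\}$ unnecessary.
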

\begin{rmk}\label{monotone.remark}
The idea is to apply  Huisken monotonicity formula fot $(\beta_2+2t\theta_t)^2$. Some extra (technical) work has to be done because $Q_{t}$ has boundary and the ambient metric $g_R$ (defined in Section \ref{geral}) is not Euclidean.
\end{rmk}
\begin{proof}
Let $\phi \in C^{\infty}(\R^{4}) $ such that $0\leq \phi\leq 1,$ 
$$\phi=1\mbox{  on }B_{\underline R/2},\quad\phi=0\mbox{ on }B_{2\underline R/3}, \quad |D\phi|+|D^2\phi| \leq  \frac{\Lambda}{\underline R},$$
where $\Lambda$ is some universal constant. By Theorem \ref{connected}  i) we have that,  provided we chose $\underline R$  large and $\varepsilon$  small, $\partial Q_t \cap B_{\underline R/2}=\empty$ and thus $\phi$ has compact support in $Q_t$.

Set
$\gamma_t=\beta_t+2t\theta_t$
and so on $Q_t$ we have from Theorem \ref{annulus.estimates} iii)
$$
	\frac{d(\gamma_t\phi)^2}{dt}=\Delta(\gamma_t \phi)^2-2|\nabla \gamma_t|^2\phi^2+\langle H, D\phi^2\rangle \gamma_t^2\\
	-2  \langle \nabla \gamma_t^2, D\phi^2 \rangle -\gamma_t^2\Delta \phi^2+\phi^2E_1
$$	
Thus, using Theorem \ref{connected} i)  to estimate $H$ and Theorem 	\ref{annulus.estimates} iv), we have that for all $\underline R$ large and $\varepsilon$ small
$$
  \frac{d(\gamma_t\phi)^2}{dt}\leq \Delta(\gamma_t \phi)^2-2|\nabla \gamma_t|^2\phi^2+ \frac{C_1}{\sqrt t \underline R}(|x|^4+1)(1-\chi_{\underline R/2})+|E_1|,
$$	
where $C_1=C_1(K_0, \Lambda_0, D_1)$ and $\chi_{\underline R/2}$ denotes the characteristic function of $B_{\underline R/2}$.  From  Lemma \ref{huisken1} we conclude
\begin{multline*}
	\frac{d}{dt}\int_{Q_t}(\gamma_t \phi)^2\Phi(0,4-t)d\H^2 +2\int_{Q_t}|\nabla \gamma_t|^2\phi^2\Phi(0,4-t)d\H^2\\
	\leq \int_{Q_t}\left(\gamma_t^2\frac{|E|^2}{4}+|E_1|\right)\Phi(0,4-t)d\H^2+ \frac{C_1}{\sqrt t \underline R}\int_{Q_t\setminus B_{\underline R/2}}(|x|^4+1)\Phi(0,4-t)d\H^2
\end{multline*}

We now estimate the two terms on the right-hand side. If $g_R$ (defined at the beginning of Section \ref{geral}) were Euclidean,  both terms $|E|^2$ and $E_1$ mentioned above would vanish. Otherwise it is easy to see that making $\underline R$ sufficiently large so that $g_R$ becomes close to Euclidean, both terms can be made arbitrarily small. The growth of $\gamma_t$ is quadratic (Theorem 	\ref{annulus.estimates} i) and iv)) and so choosing $\underline R$ sufficiently large and $\varepsilon$ sufficiently small we have
$$\int_{Q_t}\left(\gamma_t^2\frac{|E|^2}{4}+|E_1|\right)\Phi(0,4-t)d\H^2\leq \frac \eta 8\quad\mbox{for all }t\leq 2.$$
Using that $|x|\geq  |x|^2/2+\underline R^2/8$ outside $B_{\underline R/2}$, it is easy to see that
$$\Phi(0,4-t)\leq 2^{1/2}\Phi(0,2(4-t))\exp(-\underline R^2/(32(4-t))) \mbox{ on }\R^4\setminus B_{\underline R/2}.$$
Thus, for all $0\leq t\leq 2$, the uniform area bounds given in Lemma \ref{area.bounds} imply
\begin{multline*}
\int_{Q_t\setminus B_{\underline R/2}}(|x|^4+1)\Phi(0,4-t)d\H^2\\
\leq  C_2\exp(-\underline R^2/C_2)\int_{Q_t\setminus B_{\underline R/2}}(|x|^4+1)\Phi(0,2(4-t))d\H^2\leq C_3\exp(-\underline R^2/C_3),
\end{multline*}
where $C_2$ and $C_3$ depend only on $K_0$. Therefore we have
\begin{multline*}
	\frac{d}{dt}\int_{Q_t}(\gamma_t \phi)^2\Phi(0,4-t)d\H^2 +2\int_{Q_t}|\nabla \gamma_t|^2\phi^2\Phi(0,4-t)d\H^2\\
	\leq \frac{C_4}{\sqrt t \underline R}\exp(-\underline R^2/C_4)+\frac \eta 8,
\end{multline*}
where $C_4=C_4(C_1, C_3)$.
Integrating this inequality we obtain for all $t\leq 2$
\begin{multline}\label{inequality.monotone}
	\int_{Q_t}\gamma_t^2\phi^2\Phi(0,4-t)d\H^2 +2\int_0^t\int_{Q_s}|\nabla \gamma_t|^2\phi^2\Phi(0,4-s)d\H^2 ds\\
	\leq \int_{Q^1\cap B_{\underline R}}\beta^2\Phi(0,4)d\H^2+2^{3/2}C_4\underline R^{-1}\exp(-\underline R^2/C_4)+\frac{\eta}{4}.
\end{multline}
If the metric $g_R$ were Euclidean then $|\nabla\gamma_t|^2=|x^{\bot}-2tH|^2$. Hence the result follows from \eqref{inequality.monotone} if we assume $\underline R$ is large enough  so that $2|\nabla\gamma_t|^2\leq |x^{\bot}-2tH|^2$
and $$2^{3/2}C_4\underline R^{-1}\exp(-\underline R^2/C_4)\leq \frac{\eta}{4}.$$
\end{proof}

The next proposition is crucial to prove Theorem \ref{proximity.total}.

\begin{prop}\label{proximity_se}
Fix $\nu$ and $S_0$.

 There are $\varepsilon_6,$  $R_6,$ and $\eta$ depending on $\nu$, $K_0,$ and $S_0$,  such that if  $\underline R\geq R_5,$ $\varepsilon\leq \varepsilon_5$ in \eqref{defi2.geral}, and
\begin{multline}\label{rmk.se.equation}
	\sup_{0\leq t\leq 2}\int_{Q_{t}\cap B_{\underline R/2}}(\beta_t+2t\theta_t)^2\Phi(0,4-t)d\H^2\\
	+\int_0^2 \int_{Q_{t}\cap B_{\underline R/2}}|x^{\bot}-2tH|^2\Phi(0,4-t)d\H^2 dt \leq {\eta}.
\end{multline}
 then $ t^{-1/2} Q_t$ is $\nu$-close in $C^{2,\alpha}(B_{S_0})$ to a smooth embedded self-expander asymptotic to $P_1+P_2$ for every $1\leq t\leq 2$.
\end{prop}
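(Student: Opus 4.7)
The plan is to argue by contradiction using a compactness argument. Suppose the statement fails; then there exist $\nu$, $S_0$ and sequences $\varepsilon_i \to 0$, $\underline R_i \to \infty$, $\eta_i \to 0$, and times $t_i \in [1,2]$ for which the associated flows $(Q^i_t)_{0 \leq t \leq 2}$ satisfy \eqref{rmk.se.equation} with $\eta = \eta_i$, but $t_i^{-1/2} Q^i_{t_i}$ fails to be $\nu$-close in $C^{2,\alpha}(B_{S_0})$ to any smooth embedded self-expander asymptotic to $P_1 + P_2$.

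First I would extract a subsequential limit. Using the uniform area bounds of Lemma \ref{area.bounds}, Huisken's monotonicity (Lemma \ref{huisken1}), and the uniform almost-calibrated bound from Theorem \ref{annulus.estimates} i), the flows $(Q^i_t)_{0 < t \leq 2}$ converge subsequentially as Radon measures to a Brakke flow $(M_t)_{0 < t \leq 2}$ carried by a Lagrangian varifold. The vanishing of the Gaussian-weighted $L^2$ integral of $x^{\bot} - 2tH$ in \eqref{rmk.se.equation} forces the limit to be self-similarly expanding, i.e. $M_t = \sqrt{t}\,M_1$. The vanishing of the Gaussian-weighted $L^2$ integral of $\beta_t + 2t\theta_t$, combined with Theorem \ref{annulus.estimates} iii), forces $\beta + 2t\theta$ to be locally constant on $M_1$, confirming the Lagrangian self-expander equation distributionally.

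Next I would identify the asymptotic behaviour at $t = 0$. As $\varepsilon_i \to 0$ and $\underline R_i \to \infty$, the initial data $Q^i_0$ converges as Radon measures to $P_1 + P_2$ on every compact set (outside a fixed scale it literally equals $(P_1 \cup P_2) \cap A(\varepsilon_i,\underline R_i)$, and the mass of the remaining neck-piece near the origin tends to zero). Lower semi-continuity of the Gaussian density and Huisken's monotonicity then give $\lim_{t \to 0^+} M_t = P_1 + P_2$ in the Radon topology, so $M_1$ is a Lagrangian self-expander asymptotic to $P_1 + P_2$.

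The main obstacle will be upgrading this weak convergence to $C^{2,\alpha}(B_{S_0})$ convergence to a \emph{smooth} embedded self-expander. Two things are needed: that $M_1 \neq P_1 + P_2$ (the singular self-expander), and that the convergence is strong on a fixed compact set. The first is exactly where the two crucial hypotheses enter — the non area-minimizing character of $P_1 + P_2$ and the smooth existence of $(Q^i_t)$ on the full interval $[0,2]$. Heuristically, if $M_1$ coincided with $P_1 + P_2$, then for large $i$ the flow $Q^i_t$ would hover arbitrarily close to the singular cone, and a foliation of a neighbourhood of $P_1 + P_2$ by smooth self-expanders of strictly smaller Gaussian area (existing precisely because $P_1 + P_2$ is not area-minimizing) would force a density-ratio concentration along the flow and hence a singularity, contradicting smooth existence of $Q^i_t$. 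I expect this comparison step to be encoded in the upcoming Lemma \ref{stationary} and to be the technical heart of the argument. Once $M_1$ is known to be a smooth embedded self-expander, White's Regularity Theorem applied to the parabolic rescalings $t_i^{-1/2} Q^i_s$ near $s = t_i$ promotes weak convergence to $C^{2,\alpha}$ convergence on $B_{S_0}$, contradicting the defining property of the sequence.
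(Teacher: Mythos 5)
Your high-level skeleton matches the paper's proof: argue by contradiction, extract a Brakke flow limit $(\bar Q_t)$, use the vanishing of the $L^2$ term involving $x^{\bot}-2tH$ to see that $\bar Q_t$ is self-expanding, identify $\lim_{t\to 0^+}\bar Q_t=P_1+P_2$, and then leverage the smooth-existence and non-area-minimizing hypotheses to rule out a singular limit before applying White's Regularity Theorem. However, the two places you flag as the technical heart are exactly where your proposal diverges from the paper and has genuine gaps.

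First, your argument for ruling out $\bar Q_1=P_1+P_2$ is not the paper's, and as stated it would not suffice. You invoke a ``foliation of a neighbourhood of $P_1+P_2$ by smooth self-expanders of strictly smaller Gaussian area'' as a barrier; the paper uses nothing of the sort (and barriers are notoriously delicate in high codimension, which the introduction flags). What the paper actually proves (Lemma \ref{stationary}) is the stronger fact that $\bar Q_1$ is \emph{not stationary}, via a Lagrangian-geometric argument: if $\bar Q_1$ were stationary it would be a union of Lagrangian planes; Theorem \ref{connected} ii) — and here is where the hypothesis that $Q^i_t$ exists \emph{smoothly} on $[0,2]$ enters — gives that $Q^i_1\cap B_{2r_1}$ is connected, so the Poincar\'e inequality of \cite[Proposition A.1]{neves} (valid because $Q^i_1$ is almost-calibrated by Theorem \ref{annulus.estimates} i) and ii)) forces $\beta^i_1\to\bar\beta$ constant; combined with the vanishing of $\int(\beta^i_1+2\theta^i_1)^2$ this makes $\bar Q_1$ a Special Lagrangian cone; but $\bar Q_1=P_1+P_2$ and $P_1,P_2$ were chosen with distinct Lagrangian angles, a contradiction. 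Your foliation heuristic captures neither the role of connectedness nor the Lagrangian-angle obstruction, and there is no reason a priori that such a foliation exists or that it forces density concentration.

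Second — and this is the bigger omission — ruling out $\bar Q_1=P_1+P_2$ (or even ruling out stationarity) is nowhere near enough to conclude that $\bar Q_1$ is smooth and embedded. The limit could in principle have singularities elsewhere, or the convergence could be with multiplicity. The paper's Lemma \ref{small} is the real technical heart: it establishes, via Huisken's monotonicity together with the pointwise bound $\int_{P_1+P_2}\Phi(y,l)\,d\H^2\le 2$, that the Gaussian density ratios of $\bar Q_1$ at all small scales and all centres are strictly below $2$, then bootstraps this via a parabolic blow-up argument — using the non-stationarity from Lemma \ref{stationary} to exclude the self-shrinking alternative — to the statement that all small-scale density ratios are $\le 1+\varepsilon_0/2$. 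It is this uniform density bound that lets White's Regularity Theorem deliver the $C^{2,\alpha}$ convergence; your proposal inverts the logic, asserting smoothness first and regularity second, but without the density estimate you have no route to smoothness at all. You would need to reproduce something like Lemma \ref{small} (the monotonicity comparison against $\int_{P_1+P_2}\Phi\le 2$ and the blow-up dichotomy self-shrinker/self-expander $\Rightarrow$ stationary) for the argument to close.
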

\begin{rmk}\label{proximity_se.rmk}
The strategy to prove this proposition is the following. We argue by contradiction and first principles will give us a sequence of flows $(Q^i_t)_{0\leq t\leq 2}$ converging weakly to a Brakke flow $(\bar Q_t)_{0\leq t\leq 2}$, where  in \eqref{defi2.geral} we have $\underline R_i$ tending to infinity, $\varepsilon_i$ tending to zero, and
\begin{multline}\label{eqeq.se}
\lim_{i\to\infty} \int_{Q^i_{1}\cap B_{\underline R_i/2}}(\beta^i_1+2\theta_1^i)^2\Phi(0,4-t)d\H^2 \\+ \int_0^2 \int_{Q^i_{t}\cap B_{\underline R_i/2}}|x^{\bot}-2tH|^2\Phi(0,4-t)d\H^2 dt =0.
\end{multline}
Standard arguments (Lemma \ref{time.zero}) imply $\bar Q_t$ is a self-expander with $$\lim_{t\to 0^+} \bar Q_t=P_1+ P_2.$$
The goal is to show that $\bar Q_1$ is smooth because we could have, for instance,  $\bar Q_1=P_1+P_2$. 

The first step (Lemma \ref{stationary}) is to show that $\bar Q_1$ is not stationary  and the idea is the following. If $\bar Q_1$ were stationary then $\bar Q_t=\bar Q_1$ for all $t$ and so $\bar Q_1=\lim_{t\to 0^+} \bar Q_t=P_1+P_2.$ On the other hand,  from the control given in Theorem \ref{connected}, we will be able to find $r_1>0$ large so that $Q^i_1\cap B_{r_1}$ is connected (if the flow had a singularity this would not necessarily be true). Furthermore, we will deduce  from \eqref{eqeq.se} that
$$
\int_{Q^i_1\cap B_{r_1}}|\nabla\beta^i_1|^2d\H^2=\lim_{i\to\infty}  \int_{Q^i_1\cap B_{r_1}}|x^{\bot}|^2d\H^2=0.
$$
Hence we can invoke \cite[Proposition A.1]{neves} and conclude that $\beta^i_1$ must tend to constant $\bar \beta$ in $L^2$. Combining this with  \eqref{eqeq.se} we have
$$\lim_{i\to\infty}\int_{Q^i_1\cap B_{r_1}} (\bar \beta+2\theta_1^i)^2d\H^2=0,$$
and thus  $\bar Q_1$ must be Special Lagrangian with Lagrangian angle $- \bar \beta/2$. This contradicts the choice of $P_1$ and $P_2$.

The second step (Lemma \ref{small}) is to show the existence of $l_1$ so that,  for every $y\in \C^2$ and $l<l_1$, the Gaussian density ratios of $\bar Q_1$ centered at $y$ with scale $l$ defined by  
$$\Theta(y,l)=\int_{\bar Q_1}\Phi(y,l)d\H^2$$
are very close to one. If true then standard theory implies $\bar Q_1$ is smooth and embedded.  The (rough) idea for the second  step is the following. If  this step fails for some $y\in \C^2$, then $y$ should be in the singular set of  $\bar Q_1$. Now  $T_y\bar Q_1$ should be a union of (at least two) planes. Hence the Gaussian density ratios of $\bar Q_1$ at $y$ for all small scales should  not only be away from one but actually  bigger or equal than two. We know from Huisken's monotonicity formula that the Gaussian  density ratios of $\bar Q_1$ at $y$ and scale $l$ are bounded from above by the Gaussian  density ratios of $\bar Q_0=P_1+P_2$ at $y$ and scale $l+1$. But this latter Gaussian ratios are never bigger than two (see Remark \ref{small.rmk}), which means equality must hold in Huisken's monotonicity formula and so $\bar Q_t$  must be a self-shrinker. Now $\bar Q_t$ is also a self expander and thus it must be stationary. This contradicts the first step.

\end{rmk}
\begin{proof}
	Consider  a sequence  $(\underline R_i)$ converging to infinity and  a sequence $(\varepsilon_i)$ converging to zero in \eqref{defi2.geral} which give rise to a sequence of smooth flows $(Q^i_t)_{0\leq t\leq 2}$ satisfying 
\begin{multline}\label{inequality.proximity_se}
	\sup_{0\leq t\leq 2}\int_{Q^i_{t}\cap B_{\underline R^i/2}}(\beta_t+2t\theta_t)^2\Phi(0,4-t)d\H^2\\
	+\int_0^2 \int_{Q_{t}\cap B_{\underline R_i/2}}|x^{\bot}-2tH|^2\Phi(0,4-t)d\H^2 dt \leq \frac{1}{i}.
\end{multline}
We will show the existence of a smooth self-expander $\bar Q_1$  asymptotic to $P_1$ and $P_2$ so that, after passing to a subsequence, $ t^{-1/2} Q^i_t$ converges in $C^{2,\alpha}(B_{S_0})$ to $\bar Q_1$ for every $1\leq t\leq 2$.

From  compactness for integral Brakke motions \cite[Section 7.1]{ilmanen1} we know that, after passing to a subsequence, $(Q^i_t)_{0\leq t\leq 2}$ converges to an integral Brakke motion $(\bar Q_t)_{0\leq t\leq 2}$, where $Q^i_0$  converges in the varifold sense to the varifold $P_1+P_2$. Furthermore
$$\lim_{i\to\infty}\int_0^2 \int_{Q^i_{t}\cap B_{\underline R_i/2}}|x^{\bot}-2tH|^2\Phi(0,4-t)d\H^2 dt =0,$$
which means 
\begin{equation}\label{se.proximity_se}
H=\frac{x^{\bot}}{2t}\mbox{ on }\bar Q_t\mbox{ for all }t>0
\end{equation}
and so 
$\bar Q_t=\sqrt t \bar Q_1$ as varifolds for every $t>0$ (see proof of \cite[Theorem 3.1]{neves2} for this last fact).

\begin{lemm}\label{time.zero} As $t$ tends to zero, $\bar Q_t$ converges, as Radon measures, to $P_1+P_2$.
\end{lemm}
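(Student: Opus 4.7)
The plan is to combine Huisken's monotonicity formula with the self-expander identity $\bar Q_t=\sqrt t\,\bar Q_1$ and the local regularity provided by Lemma \ref{ap2}. The delicate point is that $\bar Q_t$ is only known as an integral Brakke flow, so in principle mass could be lost near the origin, which is the unique singular point of $P_1+P_2$.

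First I would apply Lemma \ref{huisken1} to each approximating smooth flow $Q^i_t$ and pass to the limit $i\to\infty$ (the $|E|$-terms vanish because $\underline R_i\to\infty$, and $Q^i_0$ converges to $P_1+P_2$ as varifolds) to obtain the density bound
\[
\int_{\bar Q_t}\Phi(y_0,T)\,d\H^2 \;\le\; \int_{P_1+P_2}\Phi(y_0,T)\,d\H^2 \quad\text{for all } y_0\in\R^4,\, T>0,\, 0<t<T.
\]
Next, for each fixed $x_0\in\R^4\setminus\{0\}$, I would verify the hypotheses of Lemma \ref{ap2} for $Q^i_0$ at $x_0$ and all large $i$: condition (a) is automatic because $Q^i_0$ coincides with $P_1+P_2$ on a fixed neighbourhood of $x_0$ by \eqref{defi.geral} (so its $C^{2,\alpha}$ norm vanishes there); (b) holds because the area density of a single plane equals $\pi$; and (c) follows from Theorem \ref{connected} i), since $\partial Q^i_t$ stays near $\{|x|\sim\underline R_i\}\to\infty$. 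Iterating Lemma \ref{ap2} over a sequence of dyadic scales $T_n\downarrow 0$ and then letting $i\to\infty$ yields Radon-measure convergence $\bar Q_t\to P_1+P_2$ on every compact subset of $\R^4\setminus\{0\}$.

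To handle possible mass concentration at the origin, the self-expander identity combined with the uniform area bound $\mu_{\bar Q_1}(B_R)\le CR^2$ (inherited from Lemma \ref{area.bounds}) gives $\mu_{\bar Q_t}(B_\delta)=t\,\mu_{\bar Q_1}(B_{\delta/\sqrt t})\le C\delta^2$ uniformly in $0<t\le 2$. For any $\phi\in C_c(\R^4)$ and $\delta>0$, write $\phi=\phi\chi_{B_\delta}+\phi\chi_{\R^4\setminus B_\delta}$: the near-origin piece is bounded by $C\|\phi\|_\infty\delta^2$ simultaneously against $\mu_{\bar Q_t}$ and $\mu_{P_1+P_2}$, while the far-from-origin piece converges to the correct limit by the previous paragraph. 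Sending first $t\to 0^+$ and then $\delta\to 0$ concludes the proof. The main obstacle is the away-from-origin convergence: a single application of Lemma \ref{ap2} controls only one dyadic time interval, so one must iterate over shrinking scales while uniformly verifying the boundary hypothesis (c) in $i$ via Theorem \ref{connected} i).
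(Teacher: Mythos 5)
Your proposal takes a genuinely different and considerably more laborious route than the paper. The paper exploits the self-expander identity $\bar Q_t = \sqrt t\,\bar Q_1$ at a more fundamental level than you do: it first notes that the limit Radon measure $\nu = \lim_{t\to 0^+}\mu_{\bar Q_t}$ exists (Ilmanen's Theorem 7.2) and satisfies $\nu \leq P_1+P_2$ by semicontinuity, then shows $\nu$ is invariant under parabolic rescaling (hence a cone) using $\bar Q_t=\sqrt t\,\bar Q_1$, and finally observes that the support of $\nu$ contains $(P_1\cup P_2)\cap A(K_1,\infty)$ by Theorem \ref{connected} i) and Theorem \ref{annulus.estimates} ii). The cone property then forces $\nu=P_1+P_2$, and crucially the support claim only needs to be verified at a single fixed spatial scale. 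You instead attempt a direct, scale-by-scale analysis of the mass, using the scaling identity only to control the region near the origin.

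Two concrete gaps in your outline. First, ``iterating Lemma \ref{ap2} over dyadic time scales $T_n\downarrow 0$'' does not, as stated, produce convergence on a fixed compact subset of $\R^4\setminus\{0\}$: the conclusion of Lemma \ref{ap2} is localized to $B_{S\sqrt T}(x_0)$, which shrinks to the point $x_0$ as $T\to 0$, so to cover a fixed annulus for small $t$ you would need to tile $(P_1\cup P_2)\cap K$ with $O(t^{-1})$ balls of radius $\sim\sqrt t$ and apply the lemma at each center, checking hypotheses a)--c) uniformly over that tiling. Second, even granting such a tiling, Lemma \ref{ap2} only constrains $\bar Q_t$ \emph{inside} the covering balls; it says nothing about mass of $\bar Q_t$ in the gaps away from $P_1\cup P_2$. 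The Gaussian density bound you state in the first step is exactly what rules this out (the right-hand side is exponentially small at small scales away from $P_1\cup P_2$, so a clearing-out argument applies), but you never actually invoke it after stating it. The paper's scaling-invariance argument collapses the dyadic iteration into a single scale and absorbs the away-from-$P_1\cup P_2$ issue into the one-line bound $\nu\leq P_1+P_2$, sidestepping both difficulties.
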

\begin{rmk}
This lemma is needed because the Brakke flow theory only assures that the support of the Radon measure obtained from $\lim_{t\to 0}\bar Q_t$ is contained in the support of $\lim_{i\to\infty}Q^i_0=P_1+ P_2$.
\end{rmk}
\begin{proof}
 Set
 $$\mu_t(\phi)=\int_{\bar Q_t}\phi \,d\H^2.$$
The Radon measure $\nu=\lim_{t\to 0^+} \mu_t$ is well defined by \cite[Theorem 7.2]{ilmanen1}  and satisfies, for every $\phi\geq 0$ with compact support 
\begin{equation}\label{measure}
\nu(\phi)\leq \lim_{i\to\infty}\int_{Q^i_0}\phi \,d\H^2=\int_{P_1+P_2} \phi \,d\H^2.
\end{equation}
It is simple to recognize that $\nu$ must be either zero, $P_1$, $P_2$, or $P_1+P_2$.

The measure $\nu$ is invariant under scaling meaning that if we set $\phi_{c}(x)=\phi(cx)$ then
\begin{multline*}
	\nu(\phi_c)=\lim_{t\to 0^+}\int_{\bar Q_t}\phi_c d\H^2=c^{-2}\lim_{t\to 0^+}\int_{c\bar Q_{t}}\phi d\H^2=
	c^{-2}\lim_{t\to 0^+}\int_{\bar Q_{c^2t}}\phi d\H^2\\
	=c^{-2}\lim_{t\to 0^+}\int_{\bar Q_{t}} \phi d\H^2=c^{-2}\nu(\phi). 
\end{multline*}
From  Theorem \ref{connected} i) and Theorem \ref{annulus.estimates} ii)  we have that the support of $\nu$ contains $(P_1+P_2)\cap A(K_1,\infty)$  which, combined with the invariance of the measure we just mentioned, implies  the support of $\nu$ coincides with $P_1\cup P_2$.  Thus $\nu=P_1+P_2$ as we wanted to show.

\end{proof}
 
\begin{lemm}\label{stationary}
$\bar Q_1$ is not stationary. 
\end{lemm}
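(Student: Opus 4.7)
My plan is to argue by contradiction, assuming $\bar Q_1$ is stationary. Stationarity gives $H=0$ on $\bar Q_1$ in the varifold sense, and since \eqref{se.proximity_se} gives $2H = x^{\bot}$ on $\bar Q_1$, we conclude $x^{\bot}=0$, so $\bar Q_1$ is a cone. The scaling relation $\bar Q_t = \sqrt{t}\,\bar Q_1$ then forces $\bar Q_t = \bar Q_1$ for every $t>0$, and Lemma \ref{time.zero} gives $\bar Q_1 = P_1+P_2$ as varifolds.

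Next I would fix a large radius $r_1$, depending only on the geometry of $\gamma_1$, so that $Q^1\cap B_{r_1/2}$ already contains the ``loop'' joining the asymptotic planes $P_1$ and $P_2$. For each large $i$, smoothness of $(Q^i_t)_{0\leq t\leq 2}$ gives that $Q^i_1$ is diffeomorphic to $Q^1\cap B_{\underline R_i}$, and by Theorem \ref{connected} i) the boundary of $Q^i_t$ stays at distance at least $\underline R_i - \Lambda_0$ from the origin; combining these facts makes $Q^i_1\cap B_{r_1}$ connected for every sufficiently large $i$. On this set the identity $|\nabla \beta^i_1| = |x^{\bot}|$ (valid up to $O(1/\underline R_i)$ metric error) combined with the varifold convergence $Q^i_1 \rightharpoonup P_1+P_2$ on $B_{r_1}$ and the fact that the limit consists of planes through the origin yields
\[
\lim_{i\to\infty}\int_{Q^i_1\cap B_{r_1}} |\nabla \beta^i_1|^2\,d\H^2 = 0.
\]
Since $Q^i_1\cap B_{r_1}$ is connected and almost-calibrated by Theorem \ref{annulus.estimates} i), \cite[Proposition A.1]{neves} produces constants $\bar\beta_i$ with $\|\beta^i_1 - \bar\beta_i\|_{L^2(Q^i_1\cap B_{r_1})} \to 0$, and Theorem \ref{annulus.estimates} iv) lets me pass to a subsequence with $\bar\beta_i \to \bar\beta \in \R$.

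To close the argument, I would use hypothesis \eqref{rmk.se.equation} at $t=1$: because $\Phi(0,3)$ has a positive lower bound on $B_{r_1}$,
\[
\lim_{i\to\infty}\int_{Q^i_1\cap B_{r_1}} (\beta^i_1 + 2\theta^i_1)^2\,d\H^2 = 0,
\]
which combined with the previous paragraph forces $\theta^i_1 \to -\bar\beta/2$ in $L^2(Q^i_1\cap B_{r_1})$. But the Lagrangian angle on $P_1$ and on $P_2$ takes two distinct constant values — the configuration $P_1+P_2$ is not special Lagrangian, which is exactly where the hypothesis that $P_1+P_2$ is not area-minimizing enters — so the varifold convergence $Q^i_1 \rightharpoonup P_1+P_2$ prevents $\theta^i_1$ from converging in $L^2$ to any single constant. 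This contradiction completes the proof.

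The main obstacle is the connectedness step: we need $Q^i_1\cap B_{r_1}$ connected for an $r_1$ chosen independently of $i$, and this uses essentially that the flow is smooth up to time $1$ (a neck-pinch before $t=1$ would disconnect the two wings, precisely the scenario excluded by Remark \ref{pro.total.rmk} (3)). Every other ingredient — the self-expander/cone reduction, the control on $x^{\bot}$, the application of Proposition A.1 of \cite{neves}, and the comparison of Lagrangian angles — is essentially forced once this connectedness is secured.
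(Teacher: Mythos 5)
Your argument follows the paper's approach, and the simplification of obtaining $\bar Q_1 = P_1 + P_2$ directly from the cone structure and Lemma \ref{time.zero} (rather than first passing through \cite[Proposition 5.1]{neves} to get a union of Lagrangian planes) is legitimate. The varifold convergence indeed gives $\int_{Q^i_1 \cap B_{r_1}} |x^\bot|^2 \to 0$ since $|x^\bot|^2$ is a continuous function of $(x, T_x)$ on a compact ball and the limit is a cone.

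However, there is a genuine gap in the connectedness step. You argue that $Q^i_1$ being diffeomorphic to the annulus $Q^1 \cap B_{\underline R_i}$ together with $\partial Q^i_t$ staying near $|x| = \underline R_i$ forces $Q^i_1 \cap B_{r_1}$ to be connected. This does not follow: nothing in these two facts prevents the ``neck'' of the annulus from migrating to, say, $|x| \approx 100 r_1$, leaving $Q^i_1 \cap B_{r_1}$ as two disjoint disc-like pieces with the surface still a connected annulus globally. What actually localizes the neck is the graphicality estimate of Theorem \ref{connected} ii) (applied with $\nu = 1$), which says $Q^i_1 \cap A(R_1, \underline R_i/2)$ consists of exactly two sheets that are graphs over $P_1$ and $P_2$; only then does the annulus topology force the two sheets to be joined inside $B_{R_1}$, making $Q^i_1 \cap B_{r_1}$ connected for $r_1 \geq R_1$. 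The paper cites Theorem \ref{connected} ii) at precisely this point, and your argument should do the same; Theorem \ref{connected} i) alone (boundary control and $C^{2,\alpha}$ bounds) is not enough.
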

\begin{proof}
	If true, then $\bar Q_1$ needs to be a cone because $x^{\bot}=2H=0$ and  so, because $\bar Q_t=\sqrt t \bar Q_1$, they are also cones for all $t>0$. Hence we must have (from varifold convergence) that for  every $r>0$
			$$\lim_{i\to\infty}\int_{0}^2\int_{Q^i_t\cap B_r}|x^{\bot}|^2d\H^2 dt=0$$
			which implies from \eqref{inequality.proximity_se} that
			\begin{equation*}
				\lim_{i\to\infty}\int_{0}^2 \int_{Q_t^i\cap B_r} (t^2|H|^2+|x^{\bot}|^2)d\H^2 dt=0.
			\end{equation*}
			Therefore, we can  assume without loss of generality that for every $r>0$
			\begin{equation}\label{zero}
			\lim_{i\to\infty}\int_{Q_1^i\cap B_r} (|H|^2+|x^{\bot}|^2)d\H^2=0
			\end{equation}
			and thus, by  \cite[Proposition 5.1]{neves},  $\bar Q_1$ is a union of Lagrangian planes  with possible multiplicities. We will argue that $\bar Q_1$ must be a Special Lagrangian, i.e., all the planes in $\bar Q_1$ must have the same Lagrangian angle. This gives us a contradiction for the following reason: On one hand, $\bar Q_t=\bar Q_1 \mbox{ for all }t>0$ which means $\lim_{t\to 0}\bar Q_t=\bar Q_1$. On the other hand, from  Lemma \ref{time.zero}, we have $\lim_{t\to 0}\bar Q_t=P_1+P_2$ which means $\bar Q_1=P_1+P_2$ and therefore the Lagrangian angle of $P_1$ and $P_2$ must be the identical (or differ by a multiple of $\pi$). This contradicts how $P_1$ and $P_2$ were chosen.

 	From Theorem \ref{connected} ii) (which we apply with $\nu=1$) we have that for all $i$ sufficiently large, $Q^i_1\cap A (R_1,\underline R_i/2)$ is graphical over $(P_1\cup P_2)\cap A (R_1,\underline R_i/2)$ with  $C^{2,\alpha}$ norm uniformly bounded. Hence we can find $r_1 \geq R_1$ so that  if we set  $N_i= Q^i_1\cap B_{3r_1}$ we have for all $i$ sufficiently large that $N_i\cap B_{2r_1}$ connected.  We note that if $Q^i_t$ had a singularity for some $t<1$ then $N_i$ could be two discs intersecting transversally near the origin and thus $N_i\cap B_{2r_1}$ would not be connected. 
	
	Furthermore we obtain from  \eqref{zero} that
	$$ \lim_{i\to\infty}\int_{N_i}|\nabla \beta^i|^2d\H^2 =\lim_{i\to\infty}\int_{N_i}|x^{\bot}|^2d\H^2=0,$$
	and so, because of Theorem \ref{annulus.estimates} ii),  we can apply \cite[Proposition A.1]{neves} and conclude the existence of a  constant $\bar \beta$ so that, after passing to a subsequence, 
		\begin{equation}\label{poincare.se}
		\lim_{i\to\infty}\int_{N_i\cap B_{r_1}}(\beta^i_1-\bar\beta)^2d\H^2=0.
		\end{equation}
Recall that from \eqref{inequality.proximity_se} we have
$$\lim_{i\to\infty} \int_{Q^i_1\cap B_{r_1}}(\beta^i_1+2\theta_1^i)^2d\H^2=0,$$
which combined with \eqref{poincare.se} implies
$$\lim_{i\to\infty} \int_{Q^i_1\cap B_{r_1}}(\bar\beta+2\theta_1^i)^2d\H^2=0.$$
Therefore $\bar Q_1$ must be a Special Lagrangian cone with Lagrangian angle $-\bar \beta/2$.
\end{proof}

In the next lemma, $\varepsilon_0$ denotes the constant given by White's Regularity Theorem \cite{white}.
\begin{lemm}\label{small}
There is $l(t)$, a positive continuous function of $0<t\leq 2$,  so that
	$$\int_{\bar Q_t}\Phi(y,l)d\H^2\leq 1+\varepsilon_0/2\quad\mbox{ for every } l\leq l(t),\,\,y\in \R^4,\mbox{ and }t>0.$$
\end{lemm}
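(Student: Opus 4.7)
\textbf{Proof proposal for Lemma \ref{small}.}

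The plan is to exploit the self-similarity $\bar Q_t=\sqrt t\,\bar Q_1$ to reduce everything to the single time slice $t=1$, and then argue by contradiction using Huisken's monotonicity formula together with Lemma \ref{stationary}. The change of variables $x=\sqrt t\,u$ in the Gaussian integral gives
\[
\int_{\bar Q_t}\Phi(y,l)\,d\mathcal H^2
=\int_{\bar Q_1}\Phi(y/\sqrt t,\,l/t)\,d\mathcal H^2,
\]
so it is enough to produce one constant $l_0>0$ such that $\int_{\bar Q_1}\Phi(y,l)\,d\mathcal H^2\le 1+\varepsilon_0/2$ for every $y\in\mathbb R^4$ and every $0<l\le l_0$. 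Then $l(t):=t\,l_0$ is positive and continuous on $(0,2]$ and satisfies the conclusion.

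Suppose no such $l_0$ exists. Then there are $y_j\in\mathbb R^4$ and $l_j\to 0^+$ with $\Theta_{\bar Q_1}(y_j,l_j)>1+\varepsilon_0/2$, where $\Theta_{\bar Q_t}(y,l):=\int_{\bar Q_t}\Phi(y,l)d\mathcal H^2$. Huisken monotonicity centered at $(y_j,1+l_j)$, combined with $\lim_{t\to 0^+}\bar Q_t=P_1+P_2$ from Lemma \ref{time.zero}, yields
\[
\Theta_{\bar Q_1}(y_j,l_j)\le \Theta_{P_1+P_2}(y_j,1+l_j)
=e^{-d(y_j,P_1)^2/(4(1+l_j))}+e^{-d(y_j,P_2)^2/(4(1+l_j))}\le 2.
\]
For this right-hand side to exceed $1+\varepsilon_0/2$, both exponentials must be at least $\varepsilon_0/2$, so $d(y_j,P_1)$ and $d(y_j,P_2)$ are uniformly bounded. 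Since $P_1\cap P_2=\{0\}$, this forces $y_j$ to lie in a compact set, and we pass to a subsequence with $y_j\to y^*$. The same monotonicity at an intermediate time $1-s$, with $s\in(0,1)$ fixed, gives $\Theta_{\bar Q_{1-s}}(y_j,s+l_j)\ge\Theta_{\bar Q_1}(y_j,l_j)>1+\varepsilon_0/2$; letting $j\to\infty$ (the Gaussian density is continuous in $(y,l)$ for positive scale and fixed measure) we obtain $\Theta_{\bar Q_{1-s}}(y^*,s)\ge 1+\varepsilon_0/2$ for every $0<s<1$. By Huisken monotonicity this quantity is non-decreasing as $s\to 0^+$, so the space-time density
\[
\Theta:=\lim_{s\to 0^+}\Theta_{\bar Q_{1-s}}(y^*,s)\ge 1+\varepsilon_0/2.
\]

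Next we argue that $\Theta$ must in fact equal $2$. Using $\bar Q_t=\sqrt t\,\bar Q_1$, the parabolic rescaling of $(\bar Q_t)$ at $(y^*,1)$ converges to the tangent cone of $\bar Q_1$ at $y^*$, viewed as a stationary (time-independent) Brakke flow. Since the Lagrangian condition is preserved by dilations and translations, this tangent cone is an integral stationary Lagrangian $2$-cone in $\mathbb R^4$, hence a finite sum of Lagrangian planes through the origin, and its density at the origin (which equals $\Theta$) is the integer number of such planes with multiplicity. Combined with $\Theta>1$ we get $\Theta\ge 2$, and with the upper bound $\Theta\le\Theta_{P_1+P_2}(y^*,1)\le 2$ we conclude $\Theta=2$ and therefore $d(y^*,P_1)=d(y^*,P_2)=0$, i.e.\ $y^*=0$. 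Equality throughout Huisken's formula on $[0,1]$ centered at $(0,1)$ then forces $\bar Q_{1-s}=\sqrt s\,(P_1+P_2)=P_1+P_2$ (since $P_1,P_2$ are planes through the origin), and combining with $\bar Q_t=\sqrt t\,\bar Q_1$ gives $\bar Q_1=P_1+P_2$ and $\bar Q_t\equiv P_1+P_2$ stationary. This contradicts Lemma \ref{stationary}, finishing the proof.

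The main technical obstacle is the integrality statement $\Theta\ge 2$: one needs to know that a $2$-dimensional integer rectifiable stationary Lagrangian cone in $\mathbb R^4$ is a sum of Lagrangian planes, and that the tangent varifold of $\bar Q_1$ at $y^*$ really is such a cone. Once this structural input is available, everything else is a routine application of Huisken monotonicity, its equality case, and the self-similarity already established.
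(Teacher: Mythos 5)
Your overall architecture mirrors the paper's: reduce to $t=1$ by self-similarity, bound Gaussian densities above by $2$ via Huisken centered at a later time and $\bar Q_0 = P_1 + P_2$, show a putative bad point would have spacetime density at least $2$, and then pinch against the non-stationarity of Lemma~\ref{stationary}. Your ``equality case of Huisken'' reformulation of the paper's strict bound $\int_{\bar Q_1}\Phi(y,l)\,d\H^2 \le 2 - C_1^{-1}$ is essentially equivalent content.

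The genuine gap is exactly where you flag ``the main technical obstacle,'' but it is larger than you suggest. You blow up $\bar Q_1$ directly at $y^*$ and assert that the resulting tangent varifold is an integral stationary Lagrangian cone which is \emph{therefore} a finite sum of Lagrangian planes, so that its vertex density is an integer $\geq 2$. Neither half of this is available. First, the density of a $2$-dimensional stationary integral varifold cone at its vertex need not be an integer: the link is a stationary geodesic net in $S^3$, which can have $Y$-junctions producing half-integer (or worse) densities, and the Legendrian constraint does not obviously forbid such junctions, since the balancing condition at a junction lives in the $2$-dimensional contact plane where nontrivial balanced configurations exist. Second, even if the density were an integer, ``stationary Lagrangian cone $\Rightarrow$ union of Lagrangian planes'' is not an elementary structural fact for general integral Lagrangian varifolds; the planar classification is known for \emph{Special} Lagrangian cones, and passing from ``stationary Lagrangian'' to ``Special Lagrangian'' is precisely where the difficulty lies once you leave the smooth setting. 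The paper avoids this by never blowing up $\bar Q_1$ itself: it performs a parabolic blow-up of the approximating \emph{smooth} flows $Q^{k_j}_t$ (via a diagonal argument that produces \eqref{first.se}--\eqref{second.se}), whose Lagrangian angles are uniformly controlled by Theorem~\ref{annulus.estimates}~i) (almost-calibrated), and then invokes \cite[Proposition~5.1]{neves} to conclude the blow-up limit is a union of Special Lagrangian currents whose blow-down is a sum of Lagrangian planes. It is this almost-calibrated angle bound on the approximating flows -- absent from your argument, which works purely with the limiting Brakke flow $\bar Q_t$ -- that rules out the non-planar stationary Lagrangian cones your version would otherwise have to contend with. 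Without bringing the $Q^i_t$ and the angle estimate back into the picture, the step $\Theta \geq 2$ is not justified.
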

\begin{rmk}\label{small.rmk}
During the proof the following simple formula will be used constantly. Given $y\in \C^2$ ,let $d_1,d_2$ denote, respectively, the distance from $y$ to $P_1$ and $P_2$. Then
\begin{equation}\label{gau.se}
\int_{P_1+P_2}\Phi(y,l)d\H^2=\exp(-d_1^2/(4l))+ \exp(-d_2^2/(4l))\leq 2.
\end{equation}
\end{rmk}
\begin{proof}
	It suffices to prove the lemma for $t=1$ because, as we have seen, $\bar Q_t=\sqrt t \bar Q_1$ for all $t>0$. 
	\vskip 0.05in
	\noindent{\bf Claim:} There is $C_1$  such that for every $l\leq 2$ and $y\in \R^4$
	\begin{equation}\label{density}
		\int_{\bar Q_1}\Phi(y,l)d\H^2\leq 2-C_1^{-1}.
	\end{equation}
From the monotonicity formula for Brakke flows \cite[Lemma 7]{ilmanen}
\begin{multline}\label{huisken}
	\int_{\bar Q_1}\Phi(y,l)d\H^2+\int_{0}^1\int_{\bar Q_t}\left|H+\frac{(x-y)^{\bot}}{2(l+1-t)}\right|^2\Phi(y,l+1-t)d\H^2 dt\\
	= \int_{P_1+P_2}\Phi(y,l+1)d\H^2\leq 2.
\end{multline}
Suppose there is a sequence $(y_i)$ and $(l_i)$ with $0\leq l_i\leq 2$ such that

$$\int_{\bar Q_1}\Phi(y_i,l_i)d\H^2\geq 2-\frac 1 i.$$
Then, from \eqref{huisken} we obtain 
$$\lim_{i\to\infty}\int_{P_1+P_2}\Phi(y_i,l_i+1)d\H^2=2$$
and so, from \eqref{gau.se}, $(y_i)$ must converge to zero. Assuming  $(l_i)$ converges to $\bar l$, we have again from \eqref{huisken} that
 \begin{multline*}
\int_{0}^{1/2}\int_{\bar Q_t}\left|H+\frac{x^{\bot}}{2(\bar l+1-t)}\right|^2\Phi(0,\bar l+1-t)d\H^2 dt\\
\leq \lim_{i\to\infty}\int_{0}^{1}\int_{\bar Q_t}\left|H+\frac{(x-y_i)^{\bot}}{2(l_i+1-t)}\right|^2\Phi(y_i,l_i+1-t)d\H^2 dt\\
\leq 2-\lim_{i\to\infty}\int_{\bar Q_1}\Phi(y_i,l_i)d\H^2 =0.
\end{multline*}
As a result
$$H+\frac{x^{\bot}}{2(\bar l+1-t)}=0\mbox{ on } \bar Q_t \mbox{ for all } 0\leq t\leq 1/2$$
and combining this with the fact that 
$H=\frac{x^{\bot}}{2t}\quad\mbox{on }\bar Q_t$
we obtain that $H=0$ on $\bar Q_1=t^{-1/2}\bar Q_t$, which contradicts Lemma \ref{stationary}. Thus, \eqref{density} must hold.

	To finish the proof we argue again by contradiction and assume the lemma does not hold. Hence, there is a sequence  $(y_j)_{j\in\N}$ of points in $\R^4$ and a sequence $(l_j)_{j\in\N}$ converging to zero for which
	\begin{equation}\label{bound.se}
	\int_{\bar Q_1} \Phi(y_j, l_j) d\H^2\geq 1+\frac{\varepsilon_0}{2}.
	\end{equation}
	
	The first thing we do is to show \eqref{bound.se} implies the existence of $m$ so that $|y_j|\leq m$ for all $j$. The reason is that from \eqref{huisken} we obtain 
	$$ \int_{P_1+P_2}\Phi(y_j,l_j+1)d\H^2\geq 1+\frac{\varepsilon_0}{2}$$
	and so, because $(l_j)$ tends to zero, we obtain from \eqref{gau.se} that the sequence  $(y_j)$ must be bounded.
	
	The motivation for the rest of the argument is the following. The sequence $(y_j)$ has a subsequence which converges to $\bar y\in \C^2$. From \eqref{bound.se} we have that $\bar y$ must belong to the singular set of $\bar Q_1$. The tangent cone to $\bar Q_1$ at $\bar y$ is a union of (at least two) Lagrangian planes and thus for all $l$ very small we must have
	$$ \int_{\bar Q_1}\Phi(\bar y,l)d\H^2\geq 2-\frac{1}{2C_1}.$$
	This contradicts \eqref{density}.

	Recalling that the flow $(Q^i_t)_{0\leq t\leq 2}$ tends to $(\bar Q_t)_{0\leq t\leq 2}$, a standard diagonalization argument allows us to find a sequence of integers $(k_j)_{j\in \N}$ so that the  blow-up sequence 
	$$\tilde Q^j_s=l_j^{-1/2}\left(Q^{k_j}_{1+sl_j}-y_j\right), \quad 0\leq s \leq 1$$
	has
	\begin{equation}\label{first.se}
	-\frac{1}{j}\leq\int_{\tilde Q^j_0}\Phi(0,u)d\H^2- \int_{ l_j^{-1/2}(\bar Q_1-y_j)} \Phi(0, u) d\H^2\leq \frac{1}{j}
	\end{equation}
	for every $1\leq u\leq j$ and
	\begin{equation}\label{second.se}
	\int_1^{1+l_j}\int_{Q^{k_j}_t\cap B_1(y_j)}\left|H-\frac{x^{\bot}}{2t}\right|^2d\H^2 dt\leq l_j^2.
	\end{equation}
	Thus, for every $r>0,$ we have from \eqref{second.se} and $|y_j|\leq m$ that
	\begin{multline*}
		\int_0^1\int_{\tilde Q^j_s\cap B_r(0)}|H|^2d\H^2 ds= l_j^{-1}\int_1^{1+l_j}\int_{Q^{k_j}_t\cap B_{\sqrt l_j r}(y_j)}|H|^2d\H^2 dt\\
		\leq l_j^{-1}\int_1^{1+l_j}\int_{Q^{k_j}_t\cap B_{\sqrt l_j r}(y_j)}\left|H-\frac{x^{\bot}}{2t}\right|^2+\left|\frac{x^{\bot}}{2t}\right|^2d\H^2 dt\leq l_j+C_2 l_j\\
	\end{multline*}
	where $C_2=C_2(r,m,K_0)$.
	Therefore
	$$\lim_{j\to\infty}\int_0^1\int_{\tilde Q^j_s\cap B_r(0)}|H|^2d\H^2 ds=0$$
	and so $(\tilde Q^j_s)_{0\leq s\leq 1}$ converges to an integral Brakke flow $(\tilde Q_s)_{0\leq s\leq 1}$ with $\tilde Q_s=\tilde Q$ for all $s$. From Proposition 5.1 in \cite{neves} we conclude that $\tilde Q$ is a union of Special Lagrangian currents.
 Note that
	$$\int_{\tilde Q} \Phi(0,1)d\H^2\geq 1+\varepsilon_0$$ and so $\tilde Q$ cannot be a plane with multiplicity one. The blow-down $C$ of $\tilde Q$ is a union of Lagrangian planes (those are the only Special Lagrangian cones in $\R^4$) and so
	\begin{equation}\label{blowdown.se}
	\lim_{u\to\infty}\lim_{j\to\infty}\int_{\tilde Q^j_0}\Phi(0,u)d\H^2=\lim_{u\to\infty}\int_{\tilde Q}\Phi(0,u)d\H^2=\int_{C}\Phi(0,1)d\H^2 \geq 2.
	\end{equation}
	From \eqref{blowdown.se} and \eqref{first.se}  one can find $u_0$  such that for every $j$ sufficiently large we have
	\begin{multline*}
		2-\frac{1}{2C_1}\leq\int_{\tilde Q^j_0}\Phi(0,u_0)d\H^2\leq \int_{l_j^{-1/2}(\bar Q_1-y_j)}\Phi(0,u_0)d\H^2+\frac{1}{j}\\
		=\int_{\bar Q_1}\Phi(y_j,u_0l_j)d\H^2+\frac{1}{j}.	
	\end{multline*}
	This contradicts \eqref{density} for all $j$ large.
\end{proof}

The lemma we have just proven allows us to find $l_0$ so that for all $\hat R$ and all $i$ sufficiently large 
$$\int_{Q^i_t} \Phi(y,l)d\H^2\leq 1+\varepsilon_0\quad\mbox{for all }y\in B_{\hat R},\,\, l\leq l_0, \mbox{ and }\frac 1 2\leq t\leq 2.$$
Thus, we have from White's Regularity Theorem \cite{white} uniform bounds on the second fundamental form and all its derivatives on compact sets of  $Q^i_t$ for all $1\leq t\leq 2.$ This implies $\bar Q_t$ is smooth and  $t^{-1/2}Q^{i}_t$ converges  in $C_{loc}^{2,\alpha}$ to $\bar Q_1$, a smooth self expander asymptotic to $P_1+P_2$ by Lemma \ref{time.zero}, which must be embedded due to Lemma \ref{small}. This finishes the proof of Proposition \ref{proximity_se}.

\end{proof}
 
	 Apply Proposition \ref{proximity_se} with $\nu$ and $S_0$ given by Theorem \ref{proximity.total} and then apply Proposition \ref{monotone} with $\eta$ being the one given by Theorem  \ref{proximity.total}. Theorem  \ref{proximity.total} follows at once if we choose $\delta=\eta/2$, $\varepsilon_3=\min\{\varepsilon_5,\varepsilon_6\},$ and $\underline R_3=\max\{\underline R_5,\underline R_6\}$.

\section{Third Step: Equivariant flow}\label{sta}
\subsection{Setup of Section \ref{sta}}\label{setup.sta}
Consider a smooth curve $\sigma:[0,+\infty)\longrightarrow \C$  so that 
\begin{itemize}
\item $\sigma^{-1}(0)=0$ and $\sigma\cup-\sigma$ is smooth at the origin;
\item $\sigma$ has a unique self intersection;
\item Outside a large ball the curve $\sigma$ can be written as the graph of a function $u$ defined over part of the negative real axis with  $$\lim_{r\to-\infty}|u|_{C^{2,\alpha}((-\infty,r])}=0;$$
\item For some $a$ small enough we have
\begin{equation}\label{equiv.cone}
\sigma\subseteq C_{a}=\{r\exp(i\theta)\,|\, r\geq 0, \pi/2+2a <\theta<\pi+a\}.
\end{equation}
\end{itemize}
The curve $\sigma$ shown in  Figure \ref{modelo-3} has all these properties. Condition \eqref{equiv.cone} is there for technical reasons which will be used during Lemma \ref{equiv.sturm}.

 Denote by $A_1$ the area enclosed by the self-intersection of $\sigma$.

We assume that $L\subset M$ is a Lagrangian surface as defined in \eqref{defi2.geral} and  that $\varepsilon$, $\underline R$ are such that Theorem \ref{connected} (with $\nu=1$) and Theorem \ref{annulus.estimates} hold. We also assume that the solution to Lagrangian mean curvature flow $(L_t)_{t\geq 0}$ satisfies the following condition.
\begin{itemize}
\item[($\star$)]There is a constant $K_1$, a disc  $D$, and $F_t: D\longrightarrow \C^2$ a normal deformation  defined for all $1\leq t\leq 2$ so that  $$L_t\cap B_{\overline R/2}\subset F_t(D)\subset L_t\cap B_{\overline R}$$ and the $C^{2,\alpha}$ norm of $F_t$ is bounded by $K_1$.
\end{itemize}

\subsection{Main result}

\begin{thm}\label{final}Assume condition $(\star)$ holds. 

There are $\eta_0$ and $R_5,$  depending on $K_1$ and $\sigma$, so that if $\overline R\geq R_5$ in \eqref{defi2.geral} and 
$L_1$ is $\eta_0$-close in $C^{2,\alpha}(B_{R_5})$ to 
$$M_1=\{(\sigma(s)\cos \alpha, \sigma(s) \sin \alpha)\,|\, s\in [0,+\infty), \alpha\in S^1\}$$
 then $(L_t)_{t\geq 0}$ must have a singularity before $T_1=2A_1/ \pi+1$ (with $A_1$ defined in Section \ref{setup.sta}). 
\end{thm}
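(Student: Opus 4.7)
The plan is a contradiction argument. Suppose $(L_t)_{0\leq t\leq T_1}$ is a smooth solution of Lagrangian mean curvature flow, and derive a contradiction from the behavior of the equivariant model $M_1$. The first task is to analyze the $SO(2)$-equivariant LMCF $(M_t)_{t\ge 1}$ with initial data $M_1$, described by a family of planar curves $(\sigma_t)_{t\ge 1}$ with $\sigma_1=\sigma$. The Lagrangian angle of the equivariant surface is $\theta_t^M=\arg(\sigma_t\dot\sigma_t)$, and the normal speed of $\sigma_t$ under the equivariant LMCF equals the arclength derivative of $\theta_t^M$. Hence the area $A(t)$ enclosed by the self-intersection loop of $\sigma_t$ satisfies
\[
-\frac{dA}{dt}=\oint_{\mathrm{loop}}\partial_{\tilde s}\theta_t^M\,d\tilde s,
\]
which is exactly the total change of $\theta_t^M$ around the loop. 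Using $\sigma_t\subset C_a$ (so $\arg\sigma_t$ has small oscillation) and tracking the turning of the unit tangent $\dot\sigma_t/|\dot\sigma_t|$, this total change is at least $\pi/2$, so the loop must collapse and $M_t$ becomes singular at some time $T<1+2A_1/\pi=T_1$.

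The next ingredient is a parabolic stability argument: for every $T'<T$ and every $\nu>0$, provided $\eta_0$ is small enough and $R_5$ large enough, $L_t$ is a $C^{2,\alpha}$ normal graph over $M_t$ with graph norm less than $\nu$ on a fixed large ball, uniformly for $t\in[1,T']$. Condition $(\star)$ furnishes uniform $C^{2,\alpha}$ bounds on $L_t$, the normal displacement from $M_t$ satisfies a quasilinear parabolic equation, and the initial $C^{2,\alpha}(B_{R_5})$ closeness at $t=1$ propagates on any compact time interval disjoint from $T$. The contradiction now comes from tracking the Lagrangian angle. For each $t<T$, the self-intersection point $p_t$ of $\sigma_t$ gives rise to two $SO(2)$-orbits $\mathcal O_1(t),\mathcal O_2(t)\subset M_t$ lying on the common circle $\{p_te^{i\alpha}:\alpha\in S^1\}$ on distinct sheets, and the computation above yields
\[
\theta_t^M\big|_{\mathcal O_2(t)}-\theta_t^M\big|_{\mathcal O_1(t)}=2\pi+o(1),
\]
the $o(1)$ coming from the ambient metric perturbation and the small oscillation of $\arg\sigma_t$. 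By the stability above, $L_t$ contains two corresponding embedded annuli $\mathcal S_1(t),\mathcal S_2(t)$ which are $\nu$-close $C^{2,\alpha}$ graphs over $\mathcal O_1(t),\mathcal O_2(t)$, with $|\theta_t^L-\theta_t^M|<\nu$ on each. Hence $\theta_t^L$ on $\mathcal S_1(t)$ and on $\mathcal S_2(t)$ differs by at least $\pi$. But as $t\to T^-$, both $\mathcal O_j(t)$ collapse to the singular circle of $M_T$, so the graphical sheets $\mathcal S_j(t)$ collapse with them: we obtain sequences $x_t\in\mathcal S_1(t)$, $y_t\in\mathcal S_2(t)$ with $|x_t-y_t|\to 0$ yet $|\theta_t^L(x_t)-\theta_t^L(y_t)|\ge \pi$, contradicting the uniform $C^{2,\alpha}$ bound on $L_t$ provided by $(\star)$. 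This contradiction occurs at some $t<T<T_1$, so $(L_t)$ cannot be smooth up to $T_1$.

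The main obstacle is making the stability step effective as $t\to T^-$, because $M_t$ itself degenerates near the singular time. One must choose $T'<T$ close enough to $T$ that the orbits $\mathcal O_1(T'),\mathcal O_2(T')$ are already within a distance small compared to the forced $\pi$-gap in $\theta_t^L$, while keeping $\eta_0$ small enough to preserve the graph comparison up to $T'$. A secondary technical point is that $M_t$ is only an immersed Lagrangian near the self-intersection, so the Lagrangian angle has to be defined branchwise, and the winding identification with $2\pi$ requires controlling error from the non-Euclidean ambient metric via the evolution-equation estimates in Theorem \ref{annulus.estimates}.
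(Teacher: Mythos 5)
Your overall strategy---contradiction, loop-collapse estimate, parabolic stability of $L_t$ near $M_t$---matches the paper's broad outline, but the final step where you extract the contradiction is genuinely broken.

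You conclude by producing $x_t\in\mathcal S_1(t)$, $y_t\in\mathcal S_2(t)$ with $|x_t-y_t|\to 0$ while $|\theta^L_t(x_t)-\theta^L_t(y_t)|\geq\pi$, and assert that this contradicts the $C^{2,\alpha}$ bound $K_1$ from $(\star)$. This inference is false. Condition $(\star)$ bounds the $C^{2,\alpha}$ norm of the parametrization $F_t:D\to\C^2$; this controls the second fundamental form and $\theta\circ F_t$ in terms of the \emph{intrinsic} (domain) geometry, but it gives no modulus of continuity of the lifted Lagrangian angle in terms of \emph{extrinsic} distance in $\C^2$. Near a transverse (near-)self-intersection, two sheets are arbitrarily close in $\C^2$ yet far apart in $D$, and their tangent planes (hence lifted $\theta$) can differ by order one with bounded curvature. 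Indeed $M_1$ itself is exactly such a configuration: it is an immersed surface with a transverse self-intersection along the orbit of $p_1$, the two sheets have lifted angle difference $2\pi-\alpha_1\geq\pi$, and yet $M_1$ has perfectly bounded $C^{2,\alpha}$ norm. Your argument, applied at $t=1$, would ``disprove'' the existence of $M_1$ as a smooth immersion---so it cannot be right. (A secondary issue: your claim that the angle gap equals $2\pi+o(1)$ requires the exterior angle $\alpha_t\to 0$ as $t\to T^-$, which you do not establish; the Gauss--Bonnet computation only gives $2\pi-\alpha_t\geq\pi$. But even the sharp $2\pi$ version would not save the argument.)

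The paper's actual route is essentially forced to cross the singular time, and this is the idea you are missing. It proves (Theorem~\ref{sing.flow}) that the limiting Brakke flow $M_t$ continues \emph{past} $T_0$ with $\sigma_t$ becoming a smooth embedded curve, and that $L^i_t\to M_t$ in $C^{2,\alpha}$ uniformly near the origin and at infinity for all $t\in[1,T_0+\delta_0]$, including $t=T_0$. Then (Corollary~\ref{sing.equiv}) it considers $f(t)=\theta_t(b_t)-\theta_t(a_t)$ with $a_t$ in a fixed small annulus $A(r/3,r/2)$ near the origin and $b_t$ in a fixed large annulus $A(2R,3R)$, both in regions where $M_t$ is smooth uniformly in $t$. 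The rotation-index change of the arc $\bar\sigma_t$ as the loop pinches off forces $\lim_{t\to T_0^-}f(t)=\lim_{t\to T_0^+}f(t)-2\pi$, while for the smooth flows the chain rule plus Theorem~\ref{sing.flow}~i) shows $df^i/dt$ is uniformly bounded, so $f$ would have to be Lipschitz. The contradiction is between the jump of $f$ and its Lipschitz continuity, not between closeness of sheets and curvature bounds. Detecting the $2\pi$ jump at \emph{fixed nondegenerate reference points on either side of the singular region}---rather than at the degenerating self-intersection itself---is what makes the argument go through, and it requires understanding the flow on both sides of $T_0$.
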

\begin{rmk}\label{motiv.rmk}
 The content of the theorem is that if $L_1$ is very close to $M_1$  and $\overline R$ sufficiently large, then the flow $(L_t)_{1\leq t\leq T_1}$ must have a finite time singularity.  The proof   proceeds by contradiction and we assume the existence of smooth flows $(L^i_t)_{0\leq t\leq T_1}$   with  $\overline R^i$ tending  to infinity and $L^i_1$ converging to  $M_1$ in $C^{2,\alpha}_{loc}$. Standard arguments show that  $(L^i_t)_{1\leq t\leq T_1}$ converges to  $(M_t)_{1\leq t\leq T_1}$ a (weak) solution to mean curvature flow starting at $M_1$. The rest of the argument will have two steps.

The first step, see Theorem \ref{sing.flow} ii)--iv), is to show the existence of a family of  curves $\sigma_t$ so that
$$M_t=\{(\sigma_t(s)\cos \alpha, \sigma_t(s) \sin \alpha)\,|\, s\in [0,+\infty), \alpha\in S^1\}$$ and show  that  $(\sigma_t)_{t\geq 1}$ behaves as depicted in Figure \ref{modelo-3} and Figure \ref{modelo-4}. More precisely, there is a singular time $T_0$ so that  $\sigma_t$ has  a single self-intersection  for  all $1\leq t<T_0$, $\sigma_{T_0}$ is embedded with a  singular point, and  $\sigma_t$ is an  embedded smooth curve  for $t>T_0$. Finally, and this will be important for the second step, we show in Theorem \ref{sing.flow} i) that $L^i_t$  converges in  $C^{2,\alpha}$ to $M_t$ in a small ball around the origin  and outside a large ball  for all $t\leq T_0+1$.

The second step (see details in  Corollary \ref{sing.equiv}) consists in considering the function
$$ f(t)=\theta_t(\infty)-\theta_t(0),$$
where $\theta_t(0)$ is the Lagrangian angle of $M_t$ at $0\in M_t$ and $\theta_t(\infty)$ is  the ``asymptotic'' Lagrangian angle of $M_t$ which makes sense because, due to   Lemma \ref{large}, $M_t$ is asymptotic to the  plane $P_1$. On one hand, because the curve $\sigma_t$ changes from a curve with a single self-intersection to a curve which is embedded as $t$ crosses $T_0$, we will see that
$$ \lim_{t\to T_0^-}f(t)=\lim_{t\to T_0^+}f(t)-2\pi.$$
On the other hand, because $L^i_t$ is smooth and converges to $M_t$ in small ball around the origin  and outside a large ball for all $t\leq T_0+1$, we will see that the function $f(t)$ is continuous. This gives us a contradiction.
\end{rmk}

\begin{proof}[Proof of Theorem \ref{final}] We argue by contradiction and assume the theorem  does not hold. In this case we can find $(L^i_t)_{0\leq t\leq T_1}$ a sequence of smooth flows which satisfies  condition $(\star)$ with  $\overline R^i$ tending  to infinity and $L^i_1$ converges to  $M_1$ in $C^{2,\alpha}_{loc}$.

Compactness for integral Brakke motions \cite[Section 7.1]{ilmanen1} implies that, after passing to a subsequence, $(L^i_t)_{0\leq t\leq T_1}$ converges to an integral Brakke motion $(M_t)_{0\leq t\leq T_1}.$ The next theorem characterizes $(M_t)_{0\leq t\leq T_1}.$

\begin{thm}\label{sing.flow}  There is $\delta_0$ small,  $r$ small, $R$ large, $T_0\in (1,T_1)$, and a continuous  family of curves $\sigma_t:[0,+\infty)\longrightarrow \C$ with 
$$\sigma_1=\sigma,\,\, \sigma_t^{-1}(0)=0\mbox{ for all } 1\leq t\leq T_0+\delta_0,$$ 
and such that
\begin{enumerate}
\item[i)]  For all $1\leq t\leq T_0+\delta_0$
\begin{itemize}
\item $M_t$ is smooth in $B_r\cup \C^2\setminus B_R$ and
\item  $L^i_t$ converges in $C_{loc}^{2,\alpha}$ to $M_t$ in $B_r\cup \C^2\setminus B_R$.
\end{itemize}
\item[ii)]For all $1\leq t<T_0$, $\sigma_t$ is a smooth curve  with a single self-intersection. Moreover
\begin{equation}\label{equiv.surface}
M_t=\{(\sigma_t(s)\cos \alpha, \sigma_t(s) \sin \alpha)\,|\, s\in [0,+\infty), \alpha\in S^1\}
\end{equation}
and
\begin{equation}\label{equiv.flow}
\frac{dx}{dt}=\vec k-\frac{x^{\bot}}{|x|^2}.
\end{equation}
Finally, for each $t<T_0$, $L^i_t$ converge in $C_{loc}^{2,\alpha}$ to $M_t$.
\item[iii)] The curve $\sigma_{T_0}$ has  a singular point $Q$ so that $\sigma_{T_0}\setminus\{Q\}$ consists of two disjoint smooth embedded arcs and, away from $Q$,  $\sigma_t$  converges to $\sigma_{T_0}$  as $t$ tends to $T_0$.
\item[iv)] For all $T_0<t\leq T_0+\delta_0$,  $\sigma_t$ is a smooth embedded curve  which satisfies \eqref{equiv.surface} and \eqref{equiv.flow}. Moreover,  for each $T_0<t\leq T_0+\delta_0,$  $L^i_t$ converge in $C_{loc}^{2,\alpha}$ to $M_t$.
\end{enumerate}
\end{thm}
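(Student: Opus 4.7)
The overall plan is to solve the reduced equivariant curve shortening equation with initial data $\sigma$, analyse its loop-collapse singularity, and identify the resulting flow with the Brakke limit $(M_t)$ using White's regularity theorem in the extreme regions.

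For part (i), since $\sigma \cup -\sigma$ is smooth at $0$, $M_1$ is tangent to a Lagrangian plane at the origin, so in a small ball $B_r$ its $C^{2,\alpha}$ norm and Gaussian density ratios are small; outside a large ball $B_R$, the asymptotic flatness of $\sigma$ together with Lemma \ref{large} shows $M_1$ is a small $C^{2,\alpha}$ perturbation of $P_1$. Given that $L^i_1 \to M_1$ in $C^{2,\alpha}_{loc}$ and that condition $(\star)$ supplies uniform area bounds together with the absence of boundary in these regions, Lemma \ref{ap2} applied with $T = T_0 + \delta_0 - 1$ (for $\delta_0$ chosen small enough that the density hypothesis holds) yields uniform $C^{2,\alpha}$ bounds on $L^i_t$ in $B_r \cup (\C^2 \setminus B_R)$ for all $1 \leq t \leq T_0 + \delta_0$, which gives smooth $C^{2,\alpha}_{loc}$ convergence to a smooth limit $M_t$ in these regions.

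For parts (ii)--(iii), I would solve the reduced equivariant equation
\[
\frac{dx}{dt} = \vec k - \frac{x^\bot}{|x|^2}
\]
directly with initial data $\sigma$, using standard short-time existence, and extend maximally to the first singular time $T_0$. By the Sturmian-type Lemma \ref{equiv.sturm} the number of self-intersections of $\sigma_t$ is non-increasing, so it stays at most one on $[1, T_0)$. To bound $T_0$, compute the rate of change of the area $A(t)$ enclosed by the loop: since $x/|x|^2$ is divergence-free off the origin and the loop avoids the origin by the cone condition \eqref{equiv.cone}, Gauss--Bonnet gives
\[
\frac{dA}{dt} = -(\pi + \theta(t)) \leq -\pi,
\]
where $\theta(t) \in (0,\pi)$ is the interior angle at the self-intersection. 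Hence $T_0 \leq 1 + A_1/\pi < T_1$, and as $t \to T_0^-$ the loop collapses to a single point $Q$ while the two arcs of the loop converge to smooth embedded arcs meeting at $Q$, yielding (iii).

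For part (iv), a blow-up analysis at $(Q, T_0)$ identifies the tangent flow as a shrinking loop and shows the equivariant flow restarts as a smooth embedded curve on $(T_0, T_0 + \delta_0]$, with no further self-intersections by a second application of Lemma \ref{equiv.sturm}. To identify the equivariant flow with the Brakke limit $M_t$, note that $M_1$ is $SO(2)$-invariant and hence so is $M_t$ (by $SO(2)$-averaging the subsequential varifold limits of $L^i_t$), and standard uniqueness of smooth mean curvature flow pins down $M_t$ as the surface of revolution generated by $\sigma_t$ wherever the latter is smooth; the $C^{2,\alpha}$ convergence at the extremes from (i) then propagates across the smooth equivariant part. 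The main obstacle is the restart across $T_0$: one must exclude mass loss or the appearance of an extra branch at $Q$, which requires classifying the tangent flow at $(Q, T_0)$ as the unique multiplicity-one shrinking loop compatible with the $SO(2)$-symmetry and the $C^0$ geometry of $\sigma_{T_0}$, and then showing that this unique continuation coincides with the Brakke limit on a short time interval beyond $T_0$.
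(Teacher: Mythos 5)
Your outline captures the broad geometry, but there are three genuine gaps where the paper does substantially more work than you indicate.

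\textbf{Rotational symmetry of $M_t$.} You propose to deduce that the Brakke limit $M_t$ is $SO(2)$-invariant by ``$SO(2)$-averaging the subsequential varifold limits.'' This does not work: Brakke flows are far from unique, so an $SO(2)$-invariant initial datum does not force an $SO(2)$-invariant Brakke limit, and the limit of a \emph{given} sequence $(L^i_t)$ is a specific object that cannot be symmetrized after the fact. The paper instead proves directly (Claim~1) that $M_t\subseteq\mu^{-1}(0)$ with $|\nabla\mu|=0$ a.e., by feeding the evolution inequality for $\mu^2$ from Theorem~\ref{annulus.estimates}~v) into the localized monotonicity formula (Lemma~\ref{huisken1}) and passing to the limit in $i$. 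Only then can Lemma~\ref{ap3} produce the family $\sigma_t$.

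\textbf{Smoothness near the origin in (i).} You invoke Lemma~\ref{ap2} with $T=T_0+\delta_0-1$ on a ball near the origin. But Lemma~\ref{ap2} requires the initial surface $L^i_1$ to have small $C^{2,\alpha}$ norm on the large ball $B_{R\sqrt T}(x_0)$, where $R$ is forced to be big once $\nu$ is fixed; for $T$ of order one this ball reaches the self-intersection locus of $\sigma$, where the $C^{2,\alpha}$ norm in White's sense is \emph{not} small (two nearby sheets). So hypothesis~a) fails and the lemma does not apply directly. The paper's Lemma~\ref{small-ball} is needed here: one first proves $\sigma_t\cap B_r$ is embedded for all $t<T_0$ (Lemma~\ref{equiv.sturm}, using the cone condition~\eqref{equiv.cone} and Lawlor-neck barriers with Angenent's Sturmian theorem), then runs a parabolic blow-up at $(0,T_0)$, uses Proposition~5.1 of \cite{neves} to identify the limit as a union of planes contained in $\mu^{-1}(0)$, and finally uses embeddedness to conclude the blow-up is a single multiplicity-one plane. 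Only at that point does White's theorem give the uniform $C^{2,\alpha}$ control near the origin. Your sketch misses both the embeddedness step and the blow-up step.

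\textbf{Unique continuation past $T_0$ in (iv).} You correctly flag this as ``the main obstacle'' but leave it essentially unresolved: ``classify the tangent flow at $(Q,T_0)$'' and ``show the unique continuation coincides with the Brakke limit'' are exactly the statements one must prove, not tools one can appeal to (a Brakke flow could, a priori, restart with a lower-density object or split off a circle, since Brakke flows are not unique). The paper's Lemma~\ref{fatlemma} supplies the actual argument: it constructs two families $\gamma^i_\pm$ of smooth equivariant curves sandwiching $\sigma_{T_0}$, runs the flow on them, shows the enclosed region $B_i(t)$ has area tending to zero (using the Lagrangian-angle formula for $\frac{d}{dt}\mathrm{area}(B_i(t))$ and the asymptotic flatness from Claim~2), and then uses the Inclusion Theorem for Brakke flows and Lemma~\ref{equi.max} to trap \emph{both} the Brakke limit $\sigma_{T_0+t}$ and Angenent's smooth continuation $\gamma_t$ inside $B_i(t)$, forcing them to coincide. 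In addition, one needs Lemma~\ref{technical} to rule out the pathology that $\sigma_{T_0}$ is smooth but the Brakke flow nevertheless pinches at $T_0$ anyway; that too is absent from your sketch.

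Smaller points: the area/Gauss--Bonnet computation you give (the analogue of Lemma~\ref{cults}) is essentially right, but it shows $T_0 < T_1$ for the \emph{reduced} flow, whereas in the paper $T_0$ is defined via the convergence $F^i_t\to F_t$ and the identification with the reduced flow's singular time is part of what has to be established; and the monotonicity of intersection number is a general fact from Angenent's theory, not what Lemma~\ref{equiv.sturm} asserts (that lemma gives embeddedness of $\sigma_t$ near the origin, which is the input to Lemma~\ref{small-ball}, not a Sturmian count of global self-intersections).
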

\begin{rmk}
{\bf (1)} The content of this theorem is to justify  the behavior shown in Figure \ref{modelo-3} and Figure \ref{modelo-4}. More precisely, Theorem \ref{sing.flow} ii) and iii) say that the solution  $(\sigma_t)_{t\geq 1}$ to \eqref{equiv.flow}  with $\sigma_1=\sigma$ will have a singularity at time $T_0$ which corresponds to the loop enclosed by the self-intersection of $\sigma_t$ collapsing. Theorem \ref{sing.flow} iv) says that after $T_0$ the curves $\sigma_t$ become  smooth and embedded.

{\bf (2)} The behavior described above follows essentially from Angenent's work \cite{angenent0,angenent} on general one-dimensional curvature flows. 

{\bf (3)} We also remark that the fact $M_t$ has the symmetries described in  \eqref{equiv.surface} up to the singular time $T_0$   is no surprise because that is equivalent to uniqueness of solutions with smooth controlled data. After the singular time $T_0$ there is no general principle justifying why $M_t$ has the symmetries described in \eqref{equiv.surface}. The reason this occurs is because the function $\mu$ defined in Theorem \ref{annulus.estimates} v) evolves by the linear heat equation and is zero if and only if $M_t$  can be expressed as in \eqref{equiv.surface} (see Claim 1 in  proof of Theorem \ref{sing.flow} for details).

{\bf (4)} Theorem \ref{sing.flow} i) is necessary so that we can control the flow in neighborhood of the origin because the right-hand side of  \eqref{equiv.flow} is singular at the origin. It is important for Corollary \ref{sing.equiv} that the convergence mentioned in Theorem \ref{sing.flow} i) holds for all $t\leq T_0+\delta_0$ including the singular time.

{\bf (5)} The proof is mainly technical and will be given at the end of this section.
\end{rmk}

\begin{cor}\label{sing.equiv} Assuming Theorem \ref{sing.flow} we have that,  for all $i$ sufficiently large,  $(L^i_t)_{1\leq T_1}$ must have a finite time singularity.
\end{cor}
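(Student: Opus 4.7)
The plan is to argue by contradiction, implementing the strategy of Remark~\ref{motiv.rmk}: compare the Lagrangian angle at a fixed point near the origin between times slightly before and after $T_0$. For each $i$ this quantity will vary Lipschitz-continuously in $t$, uniformly in $i$, while in the limit $i\to\infty$ it must jump by $2\pi$, yielding the contradiction. Concretely, assume along a subsequence each $(L^i_t)_{0\le t\le T_1}$ is smooth throughout $[1,T_1]$. By the zero-Maslov hypothesis and the fact that $L^i_t$ is asymptotic to the plane $P_1$ (Lemma~\ref{large}), lift the Lagrangian angle of $L^i_t$ to a smooth real-valued function $\theta^i_t$ with $\theta^i_t\to 0$ at infinity, and do the analogous lift $\theta_t$ on the smooth part of $M_t$. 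Fix a point $p_0\in M_1\cap B_r$ with $r$ from Theorem~\ref{sing.flow}~i); for each $i$ large pick $p^i\in L^i_1$ close to $p_0$, set $p^i_t=F^i_t(p^i)$, and define $g_i(t)=\theta^i_t(p^i_t)$.

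The first ingredient is a uniform Lipschitz bound
$$|g_i(t_1)-g_i(t_2)|\le C|t_1-t_2|\quad\text{for all } t_1,t_2\in[T_0-\eta,T_0+\eta],$$
with $C,\eta>0$ independent of $i$. By Theorem~\ref{sing.flow}~i), parabolic interior regularity upgrades the $C^{2,\alpha}_{loc}$ convergence $L^i_t\to M_t$ in $B_r$ to uniform $C^{2,\alpha}$ bounds on $L^i_t\cap B_r$ for $t\in[1,T_0+\delta_0]$ and $i$ large (once the limit is smooth there). Hence $\theta^i_t$, $\nabla\theta^i_t$ and $\Delta\theta^i_t$ are uniformly bounded on $L^i_t\cap B_r$, as is the velocity $|dp^i_t/dt|=|H(p^i_t)|$. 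Using the Lagrangian MCF evolution $\partial_t\theta=\Delta\theta+O(1/\overline R)$ for the Lagrangian angle (cf.\ the computation in the proof of Theorem~\ref{annulus.estimates}~iii)) this gives a uniform bound on $|dg_i/dt|$, hence the displayed Lipschitz estimate.

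The second ingredient is that $g_i(t)\to g(t):=\theta_t(p_0)$ pointwise on $[1,T_0+\delta_0]\setminus\{T_0\}$, while $g$ has a $\pm 2\pi$ jump at $T_0$. Convergence of \emph{specific} lifts (not merely modulo $2\pi$) follows because, for $t\ne T_0$ in this range, $L^i_t\to M_t$ in $C^{2,\alpha}_{loc}$ globally by Theorem~\ref{sing.flow}~ii), iv), both $L^i_t$ and $M_t$ are simply connected, and the lifts share the normalization at infinity. For the jump: $M_t$ is the $SO(2)$-orbit of $\sigma_t$, so the Lagrangian angle on $M_t$ equals $\arg(\sigma_t\sigma_t')$ along the curve. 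Integrating $d\theta_t$ along $\sigma_t$ from infinity to the origin, the contribution of $\arg(\sigma_t)$ is bounded by the cone constraint $\sigma_t\subset C_a$, so $\theta_t(p_0)$ equals the total winding of the tangent vector $\sigma_t'$ up to a bounded correction. The self-intersection loop of $\sigma_t$ present for $t<T_0$ and absent for $t>T_0$ (Theorem~\ref{sing.flow}~iii), iv)) contributes exactly one extra full turn of $\sigma_t'$, producing $\lim_{t\to T_0^+}g(t)-\lim_{t\to T_0^-}g(t)=\pm 2\pi$.

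The two ingredients are incompatible. Choose $\delta\in(0,\eta)$ small enough that $2C\delta<\pi$ while $|g(T_0+\delta)-g(T_0-\delta)|>\pi$ (possible by the $2\pi$ jump). For all sufficiently large $i$, pointwise convergence at $t=T_0\pm\delta$ yields $|g_i(T_0\pm\delta)-g(T_0\pm\delta)|<\delta/2$, hence $|g_i(T_0+\delta)-g_i(T_0-\delta)|>\pi-\delta$, contradicting the Lipschitz bound $|g_i(T_0+\delta)-g_i(T_0-\delta)|\le 2C\delta<\pi$. The main technical obstacle I anticipate is the rigorous tracking of the global lifts of $\theta$ through the passage $L^i_t\to M_t$: one must verify the normalization at infinity is actually preserved in the limit and that the specific lifts (not merely their reductions mod $2\pi$) converge on both sides of $T_0$. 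The $SO(2)$-symmetry of $M_t$ and its explicit description by $\sigma_t$ in Theorem~\ref{sing.flow} are what make the winding-number calculation tractable.
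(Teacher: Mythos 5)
Your proposal takes essentially the same route as the paper: both compare the (lifted) Lagrangian angle near the origin to its value far away, derive a uniform Lipschitz bound in $t$ for the smooth approximants $L^i_t$ from $C^{2,\alpha}$ control in the regions of Theorem~\ref{sing.flow}~i), and then contradict the $2\pi$ jump of the limiting quantity at $T_0$, which comes from the change in rotation index of $\sigma_t$. The difference lies in how the lift is handled. The paper sidesteps the normalization issue entirely by working with the \emph{difference} $f(t)=\theta_t(b_t)-\theta_t(a_t)$, where $a_t\in \sigma_t\cap A(r/3,r/2)$ and $b_t\in \sigma_t\cap A(2R,3R)$ are smooth paths with both endpoints inside the controlled region $B_r\cup(\C^2\setminus B_R)$; it then computes $f$ as $\int_{\bar\sigma_t}\langle\vec k,\nu\rangle\,d\H^1-\int_{\bar\sigma_t}\langle x|x|^{-2},\nu\rangle\,d\H^1$ and shows the first term jumps by $2\pi$ (rotation index) while the second is continuous across $T_0$ by the divergence theorem applied to the divergence-free field $x|x|^{-2}$, using that $\bar\sigma_t$ does not wind around the origin. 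Your version normalizes $\theta^i_t$ ``at infinity,'' which is not well-posed because each $L^i_t$ is a compact Lagrangian; this is precisely the loose end you flag yourself, and the correct replacement is to normalize at a fixed large radius $R$ (or equivalently, work with the difference of two values as the paper does). Similarly, your argument that the $\arg(\sigma_t)$ contribution is only ``bounded'' should be upgraded to ``continuous across $T_0$,'' which follows from the no-winding observation plus smooth convergence of $\sigma_t$ near the two endpoints; mere boundedness would not yield the jump conclusion. Finally, you track the comparison point via $F^i_t$ while the paper chooses smooth auxiliary paths $a^i_t,b^i_t\to a_t,b_t$ directly — both work, but the paper's choice avoids having to check that the flow does not push the tracked point out of $B_r$ during the interval $I$. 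With these adjustments your argument closes exactly as the paper's does.
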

In Remark \ref{motiv.rmk} we sketched the idea behind the proof of this corollary.

\begin{proof}
	From Theorem \ref{sing.flow}  i) we can find a  small interval $I$ containing $T_0$ (the singular time of $\sigma_t$),  and pick $a_t \in \sigma_t\cap A(r/3,r/2)$, $b_t \in \sigma_t\cap A(2R,3R)$ so that $ a_t$, $b_t$ are the endpoints of a segment $\bar \sigma_t\subseteq \sigma_t\cap A(r/3,3R)$ and  the paths $(a_t)_{t\in I}$, $(b_t)_{t\in I}$ are smooth.  Consider the function
\begin{equation*}
f(t)=\theta_t(b_t)-\theta_t(a_t).
\end{equation*}
We claim that
\begin{equation}\label{f.cor}
\lim_{t\to T_0^-}f(t)=\lim_{t\to T_0^+}f(t)-2\pi.
\end{equation}
Recall  the Lagrangian angle $\theta_t$ equals, up to a constant, the argument of the complex number $\sigma_t\sigma_t'$. Hence, for all $t\in I\setminus\{T_0\},$ we have
	$$
	\theta_t(b_t)-\theta_t(a_t)=\int_{\bar\sigma_t}d\theta_t=\int_{\bar\sigma_t}\langle\vec k,\nu \rangle d\H^1-\int_{\bar\sigma_t}\left\langle\frac{x}{|x|^2},\nu \right \rangle d\H^1,
	$$
	where $\nu$ is the normal obtained by rotating the tangent vector to $\bar\sigma_t$ counterclockwise and we are assuming that this segment is oriented from $a_t$ to $b_t$. The curves $\bar\sigma_t$ are smooth near the endpoints by Theorem \ref{sing.flow} i), have a single self intersection for $t<T_0$ by Theorem \ref{sing.flow} ii), and are embedded for $t>T_0$  by Theorem \ref{sing.flow} ii) (see Figure \ref{modelo-4}). Thus the rotation index of $\sigma_t$ changes across $T_0$ and so 
		\begin{equation}\label{curve.equiv}
\lim_{t\to T_0^+} \int_{\bar\sigma_t}\langle\vec k,\nu \rangle d\H^1=\lim_{t\to T_0^-} \int_{\bar\sigma_t}\langle\vec k,\nu \rangle d\H^1+2\pi.
\end{equation}
	 The vector field $X=x|x|^{-2}$ is divergence free and so, because  none of the segments $\bar\sigma_t$   winds around the origin,  the Divergence Theorem implies
	 \begin{equation}\label{curve2.equiv}
\lim_{t\to T_0^+} \int_{\bar\sigma_t}\left\langle\frac{x}{|x|^2},\nu \right \rangle  d\H^1=\lim_{t\to T_0^-} \int_{\bar\sigma_t}\left\langle\frac{x}{|x|^2},\nu \right \rangle  d\H^1.
\end{equation}
Claim \eqref{f.cor} follows at once from \eqref{curve.equiv} and \eqref{curve2.equiv}.

From Theorem \ref{sing.flow} i) we can choose a sequence of  smooth paths $(a^i_t)_{t\in I}$, $(b^i_t)_{t\in I}$  converging to $(a_t)_{t\in I}$, $(b_t)_{t\in I}$ respectively, and such that  $a^i_t, b_t^i \in L^i_t$. Consider the function
	$$f^i(t)=\theta^i_t(b^i_t)-\theta^i_t(a^i_t).$$
For every $t\in I\setminus\{T_0\}$ we have from Theorem \ref{sing.flow} ii) and iv) that $L^i_t$ converges in $C_{loc}^{2,\alpha}$ to $M_t$. As a result, 
\begin{equation}\label{cont.equiv}
f_i(t)\mbox{ converges to }f(t)\mbox { for all }t\in I\setminus\{T_0\}.
\end{equation}
Because the flow $(L^i_t)_{t\in I}$ exists smoothly, the function $f^i(t)$ is smooth and 
$$\frac{df^i_t(t)}{dt}=\Delta \theta^i_t (b^i_t)+\langle\nabla \theta^i_t, db^i_t/dt\rangle-\Delta \theta^i_t (a^i_t)-\langle\nabla \theta^i_t, da^i_t/dt\rangle.$$
Hence, Theorem \ref{sing.flow} i) shows that $df^i(t)/dt$ is uniformly bounded (independently of $i$) for all $t\in I$. From \eqref{cont.equiv} we obtain that the function $f$ must be Lipschitz continuous and this contradicts \eqref{f.cor}.
\end{proof}

This corollary gives us the desired  contradiction and finishes the proof of the theorem.
\end{proof}

\subsection{Proof of  Theorem \ref{sing.flow}}

Recall the function $\mu=x_1y_2-y_1x_2$ defined in Theorem \ref{annulus.estimates} v). We start by proving two claims.
\vskip 0.05in
\noindent{\bf Claim 1:}
$
M_t \subseteq \mu^{-1}(0) \mbox{ and }|\nabla \mu|=0\mbox{ for almost all } 1\leq t \leq T_1.
$
\vskip 0.05in
From  Lemma \ref{huisken1} and Theorem  \ref{annulus.estimates} v) we have
\begin{multline}\label{bird.equiv}
\int_{L^i_t}\mu^2\Phi(0,1)d\H^2+\int_1^t\int_{L^i_s}|\nabla \mu|^2\Phi(0,1+t-s)d\H^2ds\leq \\
\int_{L^i_1}\mu^2\Phi(0,1+t)d\H^2+\int_1^t\int_{L^i_s}\left(\frac{|E|^2}{4}\mu^2+E_2\right)\Phi(0,1+t-s)d\H^2ds.
\end{multline}
Because $M_1\subseteq \mu^{-1}(0)$ and $E, E_2$ converge uniformly to zero when $i$ goes to infinity we obtain
\begin{multline*}
\lim_{i\to\infty}  \int_{L^i_1}\mu^2\Phi(0,1+t)d\H^2+\int_1^t\int_{L^i_s}\left(\frac{|E|^2}{4}\mu^2+E_2\right)\Phi(0,1+t-s)d\H^2ds\\
=\int_{M_1}\mu^2\Phi(0,1+t)d\H^2=0,
\end{multline*}
which combined with \eqref{bird.equiv} implies
$$\int_{M_t}\mu^2\Phi(0,1)d\H^2+\int_1^t\int_{M_s}|\nabla \mu|^2\Phi(0,1+t-s)d\H^2ds=0.$$
This proves the claim.
\vskip 0.05in
\noindent{\bf Claim 2:}  For every $\delta$ there is $R=R(\delta,T_1)$ so that, in the annular region $A(R,\overline R_i)$, $L_t^i$ is $\delta$-close in $C^{2,\alpha}$ to the plane $P_1$ for all $1\leq t\leq T_1$ and $i$ sufficiently large.
\vskip 0.05in
 According to Lemma \ref{large} there is a constant $R=R(\delta,T_1, \underline R_i)$ so that, in the annular region $A(R,\overline R_i)$, $L^i_t$  is $\delta$-close in $C^{2,\alpha}$ to $P_1$ for all $1\leq t\leq T_1$. Because   $L_1^i$  converges to $M_1$, we can deduce from Theorem \ref{connected} i) that $\underline R_i$ is bounded and thus the constant $R$ depends only on $\delta$ and $T_1$ and not on the index $i$. This prove  the claim.

\vskip 0.05in
\noindent{\bf Definition of ``singular time'' $T_0$: } First we need to introduce some notation. Because  condition $(\star)$ holds for the flow $(L^i_t)$, there are a sequence of discs $D_i$ of increasingly larger radius and normal deformations $F^i_t: D_i\longrightarrow \C^2$ so that, for all  $1\leq t\leq 2$, $F^i_t(D_i)\subseteq L^i_t$, and $F^i_t$ converges in $C^{2,\alpha}_{loc}$ to $F_t:\R^2\longrightarrow \C^2$, where $M_t=F_t(\R^2)$.

 Consider the following condition
\begin{equation}\label{extra.sta}
F^i_t\mbox{ converges in }C_{loc}^{2,\alpha}\mbox{ to } F_t:\R^2\longrightarrow \C^2,\mbox{ where }M_t=F_t(\R^2)
\end{equation}
 and set
 \begin{equation}\label{extra2.sta}
T_0=\sup\{l\,|\, F^i_t\mbox{ is defined and condition \eqref{extra.sta} holds for all }t\leq l\}\cap [1,T_1].
\end{equation}
\vskip 0.05in
\noindent{\bf Proof of Theorem \ref{sing.flow} ii): } By the way $T_0$ was chosen and Claim 1, we have that $M_t\subseteq \mu^{-1}(0)$ is a smooth surface diffeomorphic to $\R^2$. Thus Lemma \ref{ap3} implies the existence of $(\sigma_t)_{1\leq t <T_0}$ so that \eqref{equiv.surface} holds. Because $(M_t)_{1\leq t<T_0}$ is a smooth solution to mean curvature flow it is immediate to conclude \eqref{equiv.flow}.  From the definition of $T_0$ it is also straightforward to conclude that $L^i_t$ converges in $C_{loc}^{2,\alpha}$ to $M_t$ if $t<T_0$. We are left to argue that $\sigma_t$ has a single self-intersection for all $1\leq t <T_0$.
From Lemma \ref{equiv.sturm} below we conclude that if $\sigma_t$ develops a tangential self-intersection it must be away from the origin. It is easy to see from  the flow \eqref{equiv.flow} that cannot happen.

\begin{lemm}\label{equiv.sturm} That is $r$ so that $\sigma_t\cap B_r$ is embedded  for all $1\leq t <T_0$.
\end{lemm}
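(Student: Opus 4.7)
The plan is to combine two ingredients: (a) the singular drift $-x^{\perp}/|x|^{2}$ in \eqref{equiv.flow} has a strong confinement effect near the origin, and (b) Angenent's Sturm-type intersection principle, applied to $\sigma_t$ against a suitable family of comparison lines, rules out the formation of a tangential self-intersection inside a small ball $B_r$. Throughout I will exploit the hypothesis that $\sigma\cup-\sigma$ is smooth at $0$, so $\sigma$ has a well-defined tangent direction $v_0$ at the origin, and by \eqref{equiv.cone} we have $v_0$ lying in the open cone $C_a$.

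First, I would use the cone condition \eqref{equiv.cone} together with Angenent's intersection theorem \cite{angenent0,angenent} applied to $\sigma_t$ against each of the two boundary rays of $C_a$. These rays through the origin have vanishing curvature and the drift $-x^{\perp}/|x|^{2}$ is tangent to any ray through $0$, so they are stationary under \eqref{equiv.flow}. Since $\sigma_1=\sigma$ does not cross either ray, non-increase of the intersection count forces $\sigma_t\subseteq C_{a'}$ for some slightly larger cone $C_{a'}$ and every $t\in[1,T_0)$.

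Second, I would choose a line $\ell$ through $0$ whose direction lies in the complement of $\overline{C_{a'}\cup(-C_{a'})}$. The containment $\sigma_t\subseteq C_{a'}$ then yields $\sigma_t\cap\ell=\{0\}$ for every such $t$, and the tangent of $\sigma_t$ at $0$ stays in the interior of $C_{a'}$, so $\sigma_t$ meets $\ell$ transversally at $0$. Now consider slight parallel translates $\ell_v=\ell+v$ with $|v|$ of order $r$. For $r$ sufficiently small (depending only on $\sigma$), each such $\ell_v$ avoids the origin, meets $\sigma_1\cap B_r$ transversally in exactly one point, and stays uniformly away from the unique self-intersection of $\sigma$; hence $\#(\sigma_1\cap\ell_v)=1+k$ for a bounded integer $k$ independent of $v$ and $r$.

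Third, suppose for contradiction that $\sigma_{t_0}\cap B_r$ fails to be embedded for some $t_0\in(1,T_0)$. Since self-intersections are born only through tangencies and the $\sigma_t$ are smooth for $t<T_0$, there exists a first time $t_0$ at which a tangential self-intersection of $\sigma_t$ appears inside $B_r$. An arbitrarily short time $\varepsilon>0$ later, this tangency resolves into a small loop of $\sigma_{t_0+\varepsilon}$ contained in $B_r$. For any appropriately placed translate $\ell_v$ (chosen to be a nonsingular transverse slicer of the newborn loop), this gives $\#(\sigma_{t_0+\varepsilon}\cap\ell_v)\geq 3+k$, contradicting Angenent's monotonicity $\#(\sigma_t\cap\ell_v)\leq\#(\sigma_1\cap\ell_v)=1+k$.

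The main obstacle is the justification of Angenent's intersection principle in this setting, where $\sigma_t$ solves the singular equation \eqref{equiv.flow} and the comparison lines lie close to the singular point $0$. This should be handled by localizing away from the origin: the lines $\ell_v$ with $v\neq 0$ stay outside a small neighborhood of $0$, and Theorem \ref{sing.flow} i) (applied up to time slightly past $t_0$) guarantees smoothness of $\sigma_t$ in $B_r$ for $1\le t\le t_0+\varepsilon$, so the intersection theorem of \cite{angenent0} applies on the relevant open subdomain. A secondary technical point is to verify the assumed geometric picture for the birth of a small loop from a tangential self-intersection; this is a standard consequence of the curve-shortening singularity classification in \cite{angenent}.
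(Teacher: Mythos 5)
There is a genuine gap in the proposal, and it concerns the heart of the argument. Angenent's intersection-counting theorem requires both curves to solve the \emph{same} parabolic evolution. In your third step you apply the monotonicity $\#(\sigma_t\cap\ell_v)\leq\#(\sigma_1\cap\ell_v)$ to the translated lines $\ell_v=\ell+v$ with $v\neq 0$, treating them as fixed comparison curves. But a fixed curve must be a \emph{stationary solution} of \eqref{equiv.flow} for Sturm's theorem to yield this monotonicity, and translated lines are not stationary: a straight line has $\vec k=0$, so its normal velocity under \eqref{equiv.flow} is $-x^{\perp}/|x|^2$, which vanishes identically only when $x^{\perp}\equiv 0$, i.e.\ only for rays through the origin. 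You correctly identify that the boundary rays of $C_a$ and a line $\ell$ through $0$ are stationary, but the translates $\ell_v$ are not, and the intersection number with them can increase under the flow. This is not a localization issue that can be repaired by working away from $0$; the drift is simply nonzero all along $\ell_v$, and it is strongest exactly where you need to slice, near the origin (where $|v|$ is of order $r$, small). The paper avoids this by using a family of \emph{exact stationary solutions} of \eqref{equiv.flow}: the scaled Lawlor-neck profile curves $\Gamma_\delta=\delta\Gamma$, which generate special Lagrangians under the $S^1$-action and hence are fixed points of the flow. Each $\Gamma_\delta$ for small $\delta$ initially meets $\sigma$ in exactly one point, so Sturm's theorem gives $\#(\sigma_t\cap\Gamma_\delta)\leq 1$ for all $1\leq t<T_0$, which is what forbids an extra self-intersection of $\sigma_t$ from forming near $0$.

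A secondary problem is circularity: you invoke Theorem \ref{sing.flow}~i) to guarantee smoothness of $\sigma_t$ in $B_r$ up to $t_0+\varepsilon$, but Lemma \ref{equiv.sturm} is itself a prerequisite for Theorem \ref{sing.flow} (it is used in the proofs of Theorem \ref{sing.flow}~ii) and of Lemma \ref{small-ball}, which in turn establishes Theorem \ref{sing.flow}~i)). The paper's proof of Lemma \ref{equiv.sturm} relies only on Claim 2 (asymptotics at infinity), Lemma \ref{equi.max} (the avoidance principle for the equivariant flow at the origin), and Angenent's theorem applied to the stationary family $\Gamma_\delta$ — none of which presupposes the conclusion of Theorem \ref{sing.flow}. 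Your first step (containment of $\sigma_t$ in a cone via the stationary boundary rays) is essentially the paper's first step and is fine; the remainder needs to be replaced by a comparison against a stationary family such as the Lawlor necks.
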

\begin{proof}
 Recall  the definition of $C_a$ in \eqref{equiv.cone}. We start by arguing that 
\begin{equation}\label{equiv.incl}
\sigma_t \subseteq C_a\mbox{  for all  }1\leq t <T_0.
\end{equation}
 The boundary of the cone $C_a$  consists of two half-lines which are fixed points for the flow  \eqref{equiv.flow}. From Claim 2 we see that $M_t$ is asymptotic to $P_1$ and so $\sigma_t$ does not intersect $\partial C_a$ outside  a large ball. Thus, because $\sigma_1\subset C_a$,  we conclude from Lemma \ref{equi.max}  that  $\sigma_t\subseteq C_a$ for all $1\leq t <T_0$.

Denote by $\Gamma$ a curve in $\C$  which is asymptotic at infinity to
\begin{equation}\label{equiv.asympto}
\{r\exp(i(\pi+3a/2))\,|\, r\geq 0\}\cup  \{r\exp(i(\pi/2+3a/2))\,|\, r\geq 0\}
\end{equation}
 and generates, under the $S^1$ action described in \eqref{equiv.surface},  a Special Lagrangian asymptotic to two planes (Lawlor Neck). In particular, the curves $\Gamma_{\delta}=\delta\Gamma$ are fixed points for the flow \eqref{equiv.flow} for all $\delta$ and, because of \eqref{equiv.incl} and \eqref{equiv.asympto}, $\sigma_t$  does not intersect $\Gamma_{\delta}$ outside a large ball for all $1\leq t <T_0$.

  From the description of $\sigma$ given at the beginning of Section \ref{sta}, we find  $\delta_0$ so that for every $\delta<\delta_0$ the curve $\Gamma_{\delta}$ intersects  $\sigma$ only once. Hence,  we can apply \cite[Variation on Theorem 1.3]{angenent}  and conclude that $\Gamma_{\delta}$ and $\sigma_t$  intersect only once  for  all $1\leq t<T_0$ and all $\delta<\delta_0$. It is simple to see that this implies the result we want to show provided we choose $r$ small enough.
\end{proof}

\noindent{\bf Proof of Theorem \ref{sing.flow} i):} This follows from Claim 2 and the next lemma.
\begin{lemm}\label{small-ball}
There are $r$ small and $\delta$ small so that $M_t\cap B_r$ is smooth, embedded, and $L^i_t$ converges in $C^{2,\alpha}(B_r)$ to $M_t\cap B_r$ for all $1\leq t\leq T_0+\delta$.

In particular, the curve $\sigma_t\cup -\sigma_t$ is smooth and embedded near the origin with bounds on its $C^{2,\alpha}$ norm for all $1\leq t\leq T_0+\delta$.
\end{lemm}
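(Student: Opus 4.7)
The approach is to combine the smoothness of $M_t\cap B_r$ up to the singular time $T_0$, which follows from Lemma \ref{equiv.sturm} together with the $SO(2)$-equivariant form of $M_t$ provided by Theorem \ref{sing.flow} ii), with the pseudo-locality estimate of Lemma \ref{ap2} applied starting at some time $t_*<T_0$ chosen close to $T_0$. From Lemma \ref{equiv.sturm} there is $r>0$ such that $\sigma_t\cap B_r$ is embedded for all $1\leq t<T_0$; together with Theorem \ref{sing.flow} ii) this shows that $M_t\cap B_r$ is a smooth embedded $SO(2)$-invariant disc for all $1\leq t<T_0$, with $C^{2,\alpha}$ bounds uniform on compact sub-intervals. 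Moreover, the definition of $T_0$ gives $L^i_t\to M_t$ in $C^{2,\alpha}_{\mathrm{loc}}$ throughout $[1,T_0)$.

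Fix $\nu$ as in the statement and let $\delta'$, $R$ be the constants produced by Lemma \ref{ap2} with parameters $\nu$, $S$ to be specified, and $\kappa=1/4$. Choose $\tau>0$ small enough that $(R+S+\Lambda)\sqrt{\tau}<r/2$, where $\Lambda$ is the no-motion constant from Lemma \ref{ap1}, and set $t_*=T_0-\tau/2$. Since $t_*\in[1,T_0)$, the surface $M_{t_*}\cap B_{r}$ is a smooth embedded disc close in $C^{2,\alpha}$ to its tangent plane $P_*$ at the origin. By the $C^{2,\alpha}_{\mathrm{loc}}$ convergence at $t=t_*<T_0$, for all $i$ sufficiently large $L^i_{t_*}\cap B_{r}$ is also close to $P_*$ in $C^{2,\alpha}$ with norm at most $\delta'/\sqrt{\tau}$ on every ball $B_{R\sqrt\tau}(x_0)$ with $x_0\in L^i_{t_*}\cap B_{S\sqrt\tau}$. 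Hypothesis (b) of Lemma \ref{ap2} follows from Lemma \ref{area.bounds}, while (c) is vacuous since $L^i_t$ is closed. Choosing $\overline R\geq R_5$ large enough that the ambient metric is sufficiently close to Euclidean, Lemma \ref{ap2} ii) applied at time $t_*$ for duration $\tau$ yields graphical control of $L^i_t\cap B_{S\sqrt\tau}$ over $P_*$ with $C^{2,\alpha}$ norm at most $\nu$ for all $t\in[t_*+\kappa\tau,t_*+\tau]=[T_0-\tau/4,T_0+\tau/2]$, and the no-motion estimate of Lemma \ref{ap1} ii) ensures that $L^i_t\cap B_{S\sqrt\tau}$ is exhausted by such graphical pieces.

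Combining with the $C^{2,\alpha}_{\mathrm{loc}}$ convergence granted for $t<T_0$ by the definition of $T_0$ yields uniform $C^{2,\alpha}(B_{r''})$ bounds for $L^i_t$ throughout $[1,T_0+\tau/2]$, where $r''$ is slightly smaller than $\min(r,S\sqrt\tau)$. Arzel\`a--Ascoli then produces a smooth embedded limit of $L^i_t\cap B_{r''}$ in $C^{2,\alpha}(B_{r''})$; identifying this limit with $M_t\cap B_{r''}$ via the Brakke flow convergence $L^i_t\to M_t$ already established, we obtain the required smoothness of $M_t\cap B_{r''}$ and the claimed $C^{2,\alpha}(B_{r''})$ convergence. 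Setting $\delta=\tau/2$ and renaming $r''$ as $r$ yields the first statement. For the second, Claim 1 gives $M_t\subseteq\mu^{-1}(0)$ for almost every $t\leq T_1$, and by the continuity of $M_t$ just established this inclusion holds everywhere on $[1,T_0+\delta]$; combining with the smoothness of $M_t\cap B_r$ and Lemma \ref{ap3}, we can express $M_t\cap B_r$ in the $SO(2)$-equivariant form \eqref{equiv.surface}, so that the generating curve $\sigma_t\cup -\sigma_t$ is smooth and embedded near the origin with uniform $C^{2,\alpha}$ bounds.

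The principal subtlety is the scale-compatibility between the propagation horizon $\tau$ and the radius $r$ furnished by Lemma \ref{equiv.sturm}: Lemma \ref{ap2} requires initial closeness to a plane on balls of size $R\sqrt\tau$, so the additional time one can propagate past $T_0$ is constrained to be of order $r^2/R^2$. This is harmless for the statement, which only asks for some small $\delta>0$ and some small $r>0$.
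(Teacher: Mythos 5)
Your proposal has a genuine gap at the crucial point: you have not ruled out the possibility that the equivariant flow $(M_t)$ forms a singularity \emph{at the origin} as $t\to T_0^-$, and this is exactly what the paper's blow-up argument is designed to exclude. Concretely, you fix $\tau$ first (purely from the geometry of the constants in Lemma~\ref{ap2} and the radius $r$ from Lemma~\ref{equiv.sturm}) and then assert that $M_{t_*}\cap B_r$ at $t_*=T_0-\tau/2$ is ``close in $C^{2,\alpha}$ to its tangent plane $P_*$'' with the scale-invariant bound $\delta'/\sqrt{\tau}$ required by Lemma~\ref{ap2}(a). But Lemma~\ref{equiv.sturm} gives only \emph{embeddedness} of $\sigma_t$ near the origin; it gives no curvature bound. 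If the curvature of $M_t$ were blowing up at the origin as $t\to T_0$ (say at the Type~I rate $|A|\sim (T_0-t)^{-1/2}$), then the rescaled surface $\tau^{-1/2}M_{t_*}$ would have $|A|\sim 1$ on $B_R$ and hypothesis (a) of Lemma~\ref{ap2} would fail for every choice of $\tau$; since $L^i_{t_*}\to M_{t_*}$ at the fixed time $t_*<T_0$, the same failure would be inherited by $L^i_{t_*}$ for $i$ large. Thus the pseudo-locality step you lean on is only available \emph{after} one knows the origin is a regular point of the flow at time $T_0$, and nothing in your argument establishes that.

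The paper closes exactly this gap with a parabolic blow-up at $(0,T_0)$: the rescaled flows $M^j_t=\lambda_j M_{T_0+t/\lambda_j^2}$ subconverge (via the bounds on $\int(|H|^2+|x^\perp|^2)$) to a stationary union of planes $Q\subset\mu^{-1}(0)$, and the corresponding profile curves $\sigma^j_t$ converge in $C^{1,1/2}_{\mathrm{loc}}(\R^2\setminus\{0\})$ to a union of half-lines. Lemma~\ref{equiv.sturm} is then used not to get curvature control but to force embeddedness of $\sigma^j_t$ in $B_1$, which makes the limiting cone a single multiplicity-one half-line, i.e., $Q$ is a multiplicity-one plane — so the Gaussian density at $(0,T_0)$ is $1$ and White's theorem gives smoothness. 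Only after this step does one obtain Gaussian density bounds near $(0,T_0)$ on a definite scale $l_0$, which propagate by monotonicity to $t\leq T_0+l_0^2$ and yield the uniform $C^{2,\alpha}$ control of $L^i_t$ in $B_{l_0/2}$ past $T_0$. Your proposal in effect inserts the pseudo-locality conclusion where the blow-up argument is needed; to make it work you would need to supply (some form of) the ``no singularity at the origin'' step, at which point your pseudo-locality machinery becomes an alternative to the paper's final monotonicity-plus-White step rather than to the core of the proof.
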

\begin{rmk}\label{rmk.small-ball}   
 The key step to show Lemma \ref{small-ball} is to argue that $(M_t)_{t\geq 1}$ develops no singularity at the origin at time $T_0$ and the idea is the following. First principles will show that a sequence of of blow-ups  at the origin $(\sigma^j_t)_{s<0}$ of $(\sigma_t)_{t<T_0}$  converge in $C_{loc}^{1,1/2}(\R^2-\{0\})$ to a union of half-lines. But Lemma \eqref{equiv.sturm} implies $\sigma^j_t$ is embedded in $B_1$ for all $j$ sufficiently large and so it must converge to a single half-line. White's Regularity Theorem implies  no singularity occurs.
\end{rmk}

\begin{proof} From the way $T_0$ was chosen \eqref{extra2.sta} and Lemma \ref{equiv.sturm} we know the existence of $r$ so that   $M_t\cap B_r$ is smooth, embedded, and $L^i_t$ converges in $C^{2,\alpha}(B_r)$ to $M_t\cap B_r$ for all $1\leq t<T_0$. To extend this to hold up to $t=T_0$ (with possible smaller $r$) it suffices to show that  $(M_t)_{1\leq t< T_0}$  develops no  singularity at the origin at time $T_0$. 

Choose a sequence $(\lambda_j)_{j \in \N}$ tending to infinity and set
	$$M^j_t=\lambda_j M_{T_0+t/\lambda_j^2},\quad\mbox{for all } t<0.$$ 
From \cite[ Lemma 5.4]{neves} we have the existence of a union of planes $Q$ with support contained in  $\mu^{-1}(0)$ such that, after passing to a subsequence and for almost all $t<0$, $M^j_t$ converges in the varifold sense to $Q$ and 
\begin{equation}\label{H}
\lim_{j\to\infty}\int_{M^j_t}(|H|^2+|x^{\bot}|^2)\exp({-|x|^2})d\H^2=0.
\end{equation}
From \eqref{sing.flow} we can find curves $\sigma^j_t$ so that
$$M^j_t=\{(\sigma^j_t(s)\cos \alpha, \sigma^j_t(s) \sin \alpha)\,|\, s\in [0,+\infty), \alpha\in S^1\}.$$
We obtain from \eqref{H} that for almost all $t$ and every $0<\eta<1$
$$\lim_{j\to\infty}\int_{\sigma^j_t\cap A(\eta,\eta^{-1})}|\vec k|^2+|x^{\bot}|^2d\H^1=0,$$
which implies that $\sigma^j_t$ converges in $C_{loc}^{1,1/2}(\R^2-\{0\})$ to a union of half-lines with endpoints at the origin. Lemma \ref{equiv.sturm} implies that for all $j$ sufficiently large $\sigma^j_t$ is embedded inside the unit ball. Thus $\sigma^j_t$ must converge to a single half-line and so $Q$ is a multiplicity one plane.  Thus there can be no singularity at time $T_0$ at the origin.

 We now finish the proof of the lemma. So far we have proven that $M_t$ is smooth and embedded near the origin for all $1\leq t\leq T_0$ .Thus we can find $l_0$  small so that
$$\int_{M_t}\Phi(x,l)d\H^2\leq 1+{\varepsilon_0}\mbox{ for every }x\in B_{2l_0}, l\leq 4l_0^2,\mbox{ and }1\leq t\leq T_0. $$
Monotonicity formula implies that
  $$\int_{M_t}\Phi(x,l)d\H^2\leq 1+{\varepsilon_0}\mbox{ for every }x\in B_{l_0}, l\leq l_0^2,\mbox{ and }1\leq t\leq T_0+l_0^2.$$
Because $L^i_t$ converges to $M_t$ as Radon measures, White's Regularity Theorem implies uniform $C^{2,\alpha}$ bounds in $B_{l_0/2}$ for $L^i_t$ whenever $i$ is sufficiently large and $t\leq T_0+l_0^2.$ The lemma follows then straightforwardly.
\end{proof}

\noindent{\bf Proof of Theorem \ref{sing.flow} iii):} We need two lemmas first.
\begin{lemm}\label{cults} $T_0<T_1.$
\end{lemm}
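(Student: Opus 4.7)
The plan is to track the area $A(t)$ of the region $\Omega_t$ enclosed by the self-intersection loop of $\sigma_t$, and show it decays at rate at least $\pi$, forcing the loop to collapse well before $T_1=1+2A_1/\pi$. By Theorem \ref{sing.flow} ii), for $1\leq t<T_0$ the curve $\sigma_t$ is smooth with a unique transverse self-intersection $p_t$; by the implicit function theorem $p_t$ depends smoothly on $t$, so $A(t)$ is a smooth positive function with $A(1)=A_1$.

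For the key computation, Reynolds transport combined with the flow equation \eqref{equiv.flow} gives
\begin{equation*}
\frac{dA}{dt}=\oint_{\partial\Omega_t}\left\langle \vec k-\frac{x^{\bot}}{|x|^2},\nu_{\mathrm{out}}\right\rangle d\H^1.
\end{equation*}
I would handle the curve-shortening term by Gauss--Bonnet applied to $\Omega_t$, a piecewise smooth disk with a single corner at $p_t$ of interior angle $\phi(t)\in(0,\pi)$ and exterior angle $\pi-\phi(t)$: this gives $\int_{\partial\Omega_t}\kappa_g\,d\H^1=\pi+\phi(t)$, and since $\langle\vec k,\nu_{\mathrm{out}}\rangle=-\kappa_g$ with this orientation, the curve-shortening term contributes $-(\pi+\phi(t))$. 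For the drift term, note $X(x)=x/|x|^2=\nabla\log|x|$ satisfies $\mathrm{div}\,X=2\pi\delta_0$ in the distributional sense; by the divergence theorem (excising a small disk around $0$ if $0\in\Omega_t$), $\oint\langle X,\nu_{\mathrm{out}}\rangle d\H^1=2\pi\cdot\mathbf{1}[0\in\Omega_t]$, so this term contributes $-2\pi\cdot\mathbf{1}[0\in\Omega_t]\in\{0,-2\pi\}$. Combining,
\begin{equation*}
\frac{dA}{dt}\leq -(\pi+\phi(t))\leq -\pi.
\end{equation*}

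Integrating on $[1,T_0)$ yields $A(t)\leq A_1-\pi(t-1)$. Since $A(t)>0$ throughout this interval, one concludes $T_0\leq 1+A_1/\pi<1+2A_1/\pi=T_1$, as required. The computation is largely sign-chasing; the places requiring care are the Gauss--Bonnet accounting at the corner $p_t$ (one must check that the loop sits in a sector of interior angle less than $\pi$, which is immediate from transversality of the self-intersection) and the divergence theorem when the origin lies inside $\Omega_t$ (handled by a standard excision argument around $0$). Neither is a serious obstacle.
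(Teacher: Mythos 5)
Your proof is correct and is essentially identical to the paper's argument: track the enclosed area $A_t$, use Gauss--Bonnet at the corner to show the curve-shortening term contributes at most $-\pi$, and use the divergence theorem on $x/|x|^2$ for the drift term, concluding $A_t \leq A_1-\pi(t-1)$ forces the loop to collapse by time $1+A_1/\pi < T_1$. The only cosmetic differences are that the paper runs the argument by contradiction (assume $T_0=T_1$), uses the a priori observation that the origin lies outside the loop so the drift integral vanishes exactly (rather than bounding it by $0$ in both cases as you do), and allows the exterior angle to lie in $[-\pi,\pi]$ rather than pinning it in $(0,\pi)$ via transversality — none of which changes the substance.
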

\begin{rmk}
The idea is to show that if $T_0\geq T_1$, then the loop of $\sigma_t$ created by its self-intersection would have negative area.
\end{rmk}
\begin{proof}
 Suppose $T_0=T_1$.  Denote by $q_t$ the single self-intersection of $\sigma_t$, by $c_t\subseteq \sigma_t$ the closed loop  with endpoint $q_t$, by $\alpha_t \in [-\pi,\pi]$ the exterior angle that $c_t$ has at the vertex $q_t$, by $\nu$ the interior unit normal, and  by $A_t$ the area enclosed by the loop.  From Gauss-Bonnet Theorem we have
$$\int_{c_t}\langle\vec k,\nu\rangle d\H^1+\alpha_t=2\pi\implies \int_{c_t}\langle\vec k,\nu\rangle d\H^1\geq \pi.$$
A standard formula shows that 
$$\frac{d}{dt}A_t=-\int_{c_t}\left \langle \vec k-\frac{{x^{\bot}}}{|x|^{2}},\nu\right\rangle d\H^1\leq -\pi+\int_{c_t}\left \langle\frac{{x}}{{|x|^{2}}},\nu\right\rangle d\H^1=-\pi,$$
where the last identity follows from the Divergence Theorem combined with the  fact that $c_t$ does not contain the origin in its interior. Hence  $0\leq A_t\leq A_1-(t-1)\pi$ and  making $t$ tending to $T_1=2A_1/\pi+1$ we obtain a contradiction.
\end{proof} 

\begin{lemm}\label{technical}
The curve $\sigma_t$ must become singular when $t$ tends to $T_0$.
\end{lemm}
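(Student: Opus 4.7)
The plan is to proceed by contradiction: assume $\sigma_t$ does \emph{not} become singular as $t\to T_0^-$, meaning $\sigma_t$ converges in $C^{2,\alpha}_{\mathrm{loc}}(\C\setminus\{0\})$ to a smooth (immersed) curve $\sigma_{T_0}$ with uniform $C^{2,\alpha}$ bounds on compact subsets away from the origin, while still respecting the inclusion $\sigma_t\subseteq C_a$ from \eqref{equiv.cone}. The goal is to contradict the maximality of $T_0$ as defined in \eqref{extra2.sta}.

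The first step is to combine this hypothesis with the two boundary estimates already in hand: Lemma \ref{small-ball} gives smooth $C^{2,\alpha}$ control of $M_t$ on a fixed ball $B_r$ around the origin uniformly for all $t\leq T_0+\delta$, while Claim 2 in the proof of Theorem \ref{sing.flow} shows that $M_t$ is a small $C^{2,\alpha}$ perturbation of the plane $P_1$ outside a large ball $B_R$ for every $t\leq T_1$. Combined with the assumed interior $C^{2,\alpha}$ bounds on the annular region $A(r,R)$, this yields a uniform global $C^{2,\alpha}$ bound on $M_t$ as $t\to T_0^-$. Passing to the limit gives a smooth $SO(2)$-equivariant Lagrangian surface $M_{T_0}$ of the form \eqref{equiv.surface}, with a single self-intersection loop whose enclosed area is positive (by the argument in Lemma \ref{cults}, the loop cannot already have collapsed since $T_0<T_1$).

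The second step is to apply standard short-time existence for Lagrangian mean curvature flow to $M_{T_0}$ and extend $(M_t)$ smoothly to some interval $[T_0,T_0+\epsilon)$; the controlled asymptotic geometry from Claim 2 makes short-time existence routine despite noncompactness. The $SO(2)$-equivariance propagates across $T_0$ since $\mu$ satisfies a linear heat equation (as in Claim 1 of the proof of Theorem \ref{sing.flow}), so the extended flow is still of the form \eqref{equiv.surface}.

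The third and key step is to upgrade the weak convergence $L^i_t\to M_t$ (as integral Brakke flows, which we already have for $t<T_0$) to $C^{2,\alpha}_{\mathrm{loc}}$ convergence on some interval $[1,T_0+\epsilon')$. Because $M_t$ is now smooth on $[1,T_0+\epsilon)$, Huisken's monotonicity formula (Lemma \ref{huisken1}) and the smoothness of $M_t$ together yield Gaussian density ratios of $L^i_t$ that are close to one on a uniform scale for $i$ large and $t$ slightly beyond $T_0$. White's regularity theorem then upgrades the weak convergence to $C^{2,\alpha}_{\mathrm{loc}}$ convergence of $L^i_t$ to $M_t$ on an interval strictly containing $T_0$, which directly contradicts the definition \eqref{extra2.sta} of $T_0$ as a supremum.

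The hard part is the third step: bridging weak Brakke convergence across the would-be singular time using only density-ratio information inherited from the smooth limit. Everything else is either a compactness/limit argument (step 1) or a standard short-time-existence invocation (step 2); the delicate point is verifying that no curvature can concentrate in the $L^i_t$ in an intermediate annulus at times near $T_0$, but this is exactly what White's theorem provides once the density bounds are established.
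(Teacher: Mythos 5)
Your proof is correct, but it takes a genuinely different route from the paper's. The paper works directly with the smooth approximating flows $(L^i_t)$: after noting that the assumed smoothness of $\sigma_{T_0}$ makes $F_t$ converge smoothly as $t\to T_0^-$ (with a uniform $C^2$ bound $C$), it picks $\bar t<T_0$, controls the $C^2$ norm of $F^i_{\bar t}$ by $2C$ for $i$ large, and then invokes the maximum principle for the evolution equation of $|A|^2$ (with the boundary contribution controlled by Claim 2) to obtain a bound $4C$ on a uniform-length interval $[\bar t,\bar t+\delta(C)]$; choosing $\bar t$ with $\bar t+\delta>T_0$, parabolic regularity then lets the definition \eqref{extra.sta} be violated. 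You instead propagate the smallness of Gaussian density ratios forward in time from $t_0<T_0$ via Huisken's monotonicity formula and then invoke White's regularity theorem to regain $C^{2,\alpha}$ control past $T_0$. Both arguments are sound and both hinge on obtaining an estimate that persists for a uniform time past a slice $\bar t<T_0$; the paper's $|A|^2$ maximum-principle argument is more elementary and self-contained, while yours is more in line with the density-ratio philosophy used elsewhere in the paper (Lemma \ref{small-ball}, Lemma \ref{small}) and avoids having to control boundary terms in the $|A|^2$ evolution. One remark: your second step — extending $(M_t)$ past $T_0$ by short-time existence on the noncompact limit $M_{T_0}$ — is actually unnecessary and adds a technical burden you do not need. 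The density-propagation step only requires that $L^i_{t_0}$ has density ratios close to $1$ at a uniform scale for some $t_0<T_0$ (which follows from the $C^{2,\alpha}$ convergence to the smooth $M_{t_0}$ supplied by your first step); Huisken's monotonicity then carries that bound forward to $t$ slightly beyond $T_0$ without any reference to the behavior of the limit flow past $T_0$. You should also be a little careful with the phrase ``converges in $C^{2,\alpha}_{\mathrm{loc}}$ to $M_t$'' at the end — after White gives uniform $C^{2,\alpha}$ estimates you extract a convergent subsequence and then use the Brakke limit to identify the limit as $M_t$; this is exactly what condition \eqref{extra.sta} demands.
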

\begin{rmk}
The flow $(M_t)_{t\geq 0}$ is only a weak solution to mean curvature flow which means that, in principle,  $\sigma_{T_0}$ could be a smooth curve with a self intersection and, right after, $\sigma_t$ could split-off the self intersection and become instantaneously a disjoint union of a circle with a half-line. This lemma shows that, because $M_t$ is a limit of smooth flows $L^i_t$, this phenomenon cannot happen. The proof is merely technical.
\end{rmk} 
\begin{proof}
We are assuming $F^i_{t}$ converges in $C^{2,\alpha}_{loc}$ to $F_t$ for all $t<{T_0}$. Assuming $\sigma_{T_0}$ is smooth we have from parabolic regularity that $(\sigma_t)_{t\leq {T_0}}$ is a smooth flow. Thus $M_{T_0}$ is also smooth and  the maps $F_t$ converge smoothly to a map $F_{T_0}:\R^2\longrightarrow \C^2$. Therefore, there is a constant $C$ which bounds the $C^2$ norm of $F_t$ for all ${T_0}-1\leq t\leq {T_0}$. Hence, using Claim 2 to control the $C^{2,\alpha}$ norm of $F^i_t$ outside a large ball, we obtain that for $\bar t<{T_0}$ and $i$ sufficiently large, the $C^{2}$ norm of $F^i_{\bar t}$ is bounded by $2C$. Looking at the evolution equation of $|A|^2$ it is then a standard application of the maximum principle to find $\delta=\delta(C)$ such that  the second fundamental form of the immersion $F^i_{t}$ is bounded by $4C$ for all $\bar t\leq t\leq \bar t+\delta$. Therefore, choosing $\bar t$ such that ${T_0}<\bar t+\delta$, parabolic regularity implies condition \eqref{extra.sta}  holds for all $t$ slightly larger than ${T_0}$ which, due to Lemma \ref{cults}, contradicts the maximality of ${T_0}$.
\end{proof}

Claim 2 and Lemma \ref{small-ball} give us control of the flow \eqref{equiv.flow} outside an annulus. Hence we apply Theorem \ref{angenent} and conclude the singular curve $\sigma_{T_0}$ contains a point $Q$ distinct from the origin such that $\sigma_{T_0}\setminus\{Q\}$ consists of two smooth disjoint arcs and, away from the singular point, the curves $\sigma_t$ converge smoothly to $\sigma_{T_0}$ (see Figure \ref{modelo-4}).  
\vskip 0.05in
\noindent{\bf Proof of Theorem \ref{sing.flow} iv):} 
From Theorem \ref{sing.flow} i) and Claim 1, we can apply Lemma \ref{ap3} and conclude that $M_t$ can be described by a one dimensional varifold $\sigma_t\subset \C$ for almost all $T_0<t<T_1$.

In \cite[Section 8]{angenent0} Angenent constructed an embedded smooth one solution $(\gamma_t)_{t>0}$ which tends to $\sigma_{T_0}$ when $t$ tends to zero and which looks like the solution described on Figure \ref{modelo-4}. The next lemma is the key to show Theorem \ref{sing.flow} iv).
 
\begin{lemm}\label{fatlemma}There is $\delta$ small so that $\gamma_t=\sigma_{{T_0}+t}$  for all $0<t <\delta$.
\end{lemm}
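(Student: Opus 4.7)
The strategy is to identify $\sigma_{T_0+t}$ with Angenent's smooth embedded solution $\gamma_t$ first away from the singular point $Q$ of $\sigma_{T_0}$ and then in a neighbourhood of $Q$. Since $\sigma_{T_0}\setminus\{Q\}$ consists of two smooth embedded arcs, and since $\sigma_t$ (for $t<T_0$, by Theorem \ref{sing.flow} iii)) and $\gamma_t$ both converge to $\sigma_{T_0}$ smoothly away from $Q$, parabolic regularity for the quasilinear equation \eqref{equiv.flow} implies that $\sigma_{T_0+t}$ and $\gamma_t$ are both smooth embedded solutions for $0<t<\delta$ on any fixed region $U$ disjoint from a small ball $B_\rho(Q)$. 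Short-time uniqueness for the smooth parabolic initial value problem then yields $\gamma_t=\sigma_{T_0+t}$ on $U$.

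The delicate part is near $Q$, where $\sigma_{T_0}$ has a genuine singular point and standard parabolic uniqueness does not apply directly. The plan is first to argue that $\sigma_{T_0+t}$ is a smooth embedded curve in a neighbourhood of $Q$ for all sufficiently small $t>0$. To that end, I would imitate the argument of Lemma \ref{small-ball}: perform a parabolic blow-up of the Brakke flow $M_t$ centred at $(Q,T_0)$, apply \cite[Lemma 5.4]{neves} to conclude that the blow-up limit is a static union of Lagrangian planes with support contained in $\mu^{-1}(0)$, and then use that each blow-up curve $\sigma^j_t$ is embedded in the unit ball (since it is a limit of the smooth embedded $L^i_t$, via the Sturmian argument used in Lemma \ref{equiv.sturm}) to force the tangent flow to be a single multiplicity-one plane. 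White's regularity theorem then gives uniform $C^{2,\alpha}$ control of $\sigma_{T_0+t}$ in a small ball around $Q$ for all $t\in(0,\delta)$.

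With both $\sigma_{T_0+t}$ and $\gamma_t$ now known to be smooth embedded solutions of \eqref{equiv.flow} converging to the common curve $\sigma_{T_0}$ as $t\to 0^+$ (smoothly away from $Q$, and in the Hausdorff sense near $Q$), the identification follows from the uniqueness theory of Angenent \cite{angenent0} for smooth embedded solutions of one-dimensional curvature flows issuing from a node-collapse singular initial curve, possibly supplemented by an intersection-counting argument against small time-shifts of $\gamma_t$ as in \cite{angenent}: if $\gamma_t$ and $\sigma_{T_0+t}$ ever disagreed, the number of their transverse intersections near $Q$ would have to be infinite as $t\to 0^+$ and finite at any later time, contradicting monotonicity.

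The main obstacle is the smoothness/embeddedness step near $Q$: ruling out an alternative resolution that preserves the self-intersection (or splits off a small loop) requires the Brakke limit structure of $\sigma_{T_0+t}$ as a limit of the smooth embedded approximations $L^i_t$, not merely the fact that it is a weak solution of \eqref{equiv.flow}. Once this embeddedness is in hand, the uniqueness of the resolution reduces to a clean parabolic uniqueness statement, and the lemma follows.
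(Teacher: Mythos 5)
Your plan has a genuine gap, concentrated in the step where you try to show smoothness of $\sigma_{T_0+t}$ near $Q$ by imitating Lemma \ref{small-ball}.

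The mechanism of Lemma \ref{small-ball} is a parabolic blow-up at the spacetime point $(0,T_0)$ together with the embeddedness of $\sigma_t$ in a fixed ball around the origin, which comes from the Sturmian argument in Lemma \ref{equiv.sturm}. That argument uses the fixed Lawlor-neck barriers $\Gamma_\delta$ through the origin and has no analogue near $Q$. In fact, near $Q$ the curves $\sigma_t$, $t<T_0$, are \emph{not} embedded — this is exactly where the self-intersection and the collapsing loop sit — so the corresponding blow-up curves $\sigma^j_t$ centred at $(Q,T_0)$ are not embedded either, and the tangent flow there cannot be a multiplicity-one plane. The point $(Q,T_0)$ is a genuine singularity of the Brakke flow (the loop collapse that Lemma \ref{technical} and Theorem \ref{angenent} exhibit), so White's regularity theorem cannot yield $C^{2,\alpha}$ control through $T_0$; any correct blow-up there will detect the singularity. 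Note also that embeddedness of the $L^i_t$ does not pass to the limit: a sequence of embedded surfaces can converge to an immersed limit, and indeed near $Q$ the Lagrangians $L^i_t$ approximate an immersed $M_t$ for $t$ just below $T_0$.

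This then undermines your third step as well. What you call ``the uniqueness theory of Angenent for solutions issuing from a node-collapse singular initial curve'' is not something one can simply cite here; $\sigma_{T_0+t}$ arises as a Brakke limit of Lagrangian flows, and a priori the weak evolution through the singularity could differ from Angenent's constructed smooth resolution (for instance by splitting off the collapsing loop as a separate shrinking component). Pinning this down is precisely the content of the lemma. The paper's actual argument sidesteps your obstacle altogether: it sandwiches $\sigma_{T_0}$ between two smooth embedded equivariant barriers $\gamma^i_\pm$, runs the equivariant flow on the barriers, uses an area computation (via the Liouville form $\lambda$ and the behaviour at infinity from Claim 2) to show the enclosed region has area tending to zero, and invokes the avoidance principle (Lemma \ref{equi.max}) plus Ilmanen's Inclusion Theorem to trap both $\gamma_t$ and $\sigma_{T_0+t}$ inside the shrinking region. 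This never needs to establish smoothness of $\sigma_{T_0+t}$ near $Q$ beforehand; instead, uniqueness is forced by the area squeeze.
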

\begin{rmk}
  This lemma amounts to show that  there is a unique (weak) solution to the flow \eqref{equiv.flow} which starts at $\sigma_{T_0}$.

The idea to prove this lemma, which we now sketch, is well known among the specialists. Consider  $\gamma^i_+,\gamma^i_-$  two sequences of smooth embedded curves  with an endpoint at  the origin and converging to $\sigma_{T_0}$, with $\gamma^i_+,\gamma^i_-$ lying above and below $\sigma_{T_0}$, respectively. There is a region $A_i$ which has $\sigma_{T_0} \subseteq A_i$ and $\partial A_i=\gamma^i_+\cup\gamma^i_-.$ Denote the flows starting  at $\gamma^i_{+}$ and $\gamma^i_-$ by $\gamma^i_{+,t}$ and  $\gamma^i_{-,t}$ respectively, and  use $A_i(t)$ to denote the region below $\gamma^i_{+,t}$ and above $\gamma^i_{-,t}$.

For the sake of the argument we can assume that $A_i$ is finite and tends to zero when $i$ tends to infinity. A simple computation will show that $\mbox{area}(A_i(t))\leq \mbox{area}(A_i)$ and so, like $A_i$,  the area of $A_i(t)$ tends to zero when $i$ tends to infinity. The avoidance principle for the flow implies that $\sigma_{{T_0}+t}, \gamma_t\subseteq A_i(t)$ for all $i$ and $t$, and thus, making $i$ tend to infinity, we obtain that  $\sigma_{{T_0}+t}=\gamma_t$.

The proof requires some technical work to go around  the fact  the curves $\gamma^i_+,\gamma^i_-$ are non compact and thus $A_i$ could be infinity.
\end{rmk}

\begin{proof}
Let $\gamma^i_+,\gamma^i_-:[0,+\infty]\longrightarrow \C$ be two sequences of smooth embedded curves converging to $\sigma_{T_0}$ with $\gamma^i_+,\gamma^i_-$ lying above (below) $\sigma_{T_0}$ and such that 
\begin{equation}\label{pm.condition}
(\gamma_{\pm}^{i})^{-1}(0)=0,\quad  \gamma_{\pm}^i\cup -\gamma_{\pm}^i \mbox{ is smooth,}\quad \theta^i_+(0)<\theta_{T_0}(0)<\theta^i_-(0).
\end{equation}
The convergence is assumed to be strong on compact sets not containing the cusp point of $\sigma_{T_0}$.  Denote by $\gamma^i_{\pm,t}$ the solution to the equivariant flow  \eqref{equiv.flow} with initial condition $\gamma^i_{\pm}$. Short time existence was proven in \cite[Section 4]{neves} provided we assume controlled behavior at infinity.  The same arguments  used to study $\sigma_t$ (namely Lemma \ref{small-ball}) show that embeddeness is preserved and no singularity of   $\gamma^i_{\pm,t}$ can occur at the origin. Hence an immediate consequence of Theorem \ref{angenent}   is that the flow exists smoothly for all time. 

From  the last condition in \eqref{pm.condition} we know $\gamma^i_+$ intersects transversely  $\gamma^i_-$ at the origin. Furthermore we can choose $\gamma^i_+,\gamma^i_-$ to be not asymptotic to each other at infinity. Thus we can apply Lemma \ref{equi.max} and conclude that  $\gamma^i_{+,t}$  and $\gamma^i_{-,t}$ intersect each other  only at the origin. Hence there is an open region $A_i(t)\subset \C$  so that $\gamma^i_{+,t}\cup \gamma^i_{-,t}=\partial A_i(t)$.

From Claim 2  we know that $\gamma_{T_0}$ is asymptotic to a straight line. Thus we can reason like in the proof of Theorem \ref{annulus.estimates} i) and conclude the existence of  $R_i$ tending to infinity  so that  $\gamma^i_{\pm, t}\cap A(R_i/2,2R_i)$ is graphical over the   real axis with $C^{1}$ norm  smaller than $1/i$ for all $0\leq t\leq 1$.  

Consider $B_i(t)=A_i(t)\cap \{(x,y)\,|\, x\geq -R_i\}$. This region has the origin as one of its ``vertices'' and is bounded by three smooth curves. The top  curve is part  of $\gamma^i_{+,t}$, the bottom curve is part of $\gamma^i_{-,t}$, and left-side curve is part of $\{x=-R_i\}$. 
Using the fact that
$$\mbox{area}(B_i(t))=\int_{\partial B_i(t)} \lambda,$$
differentiation shows that
$$
\frac{d}{dt}\mbox{area}(B_i(t))  =-\left(\theta^i_{+,t}(-R_i)-\theta^i_{-,t}(-R_i)\right)+\left(\theta^i_{+,t}(0)-\theta^i_{-,t}(0)\right),
$$
where $\theta^i_{\pm,t}(-R_i)$ denote the Lagrangian angle of $\gamma^i_{\pm,t}$ at the intersection with  $\{x=-R_i\}$ and $\theta^i_{\pm,t}(0)$ denotes the Lagrangian angle of $\gamma^i_{\pm,t}$ at the origin. Because $\gamma^i_{+,t}$ lies above $\gamma^i_{-,t}$ and they intersect at the origin, we have $\theta^i_{+,t}(0)\leq \theta^i_{-,t}(0)$. Thus
$$
\frac{d}{dt}\mbox{area}(B_i(t))  \leq -\left(\theta^i_{+,t}(-R_i)-\theta^i_{-,t}(-R_i)\right).
$$
Recalling that $\gamma^i_{\pm, t}\cap A(R_i/2,2R_i)$ is graphical over the   real axis with $C^{1}$ norm  smaller than $1/i$ for all $t\leq 1$, we have that the term on the  right side of the above inequality    tends to zero when $i$ tends to infinity. Finally the curves can be chosen so that $\mbox{area}(B_i(0))\leq 1/i$ and thus 
\begin{equation}\label{fat}
\lim_{i\to\infty}\area(B_i(t))\leq\lim_{i\to\infty}\area(B_i(0))=0.
\end{equation}

We now argue the existence of $\delta$ so that
\begin{equation}\label{inclusion}
 \gamma_t,\sigma_{{T_0}+t} \subseteq A_i(t)\mbox{ for all }t\leq \delta \mbox{ and all }i. 
\end{equation}
The inclusion for $\gamma_t$ follows from  Lemma \ref{equi.max}. Next we want to deduce the inclusion for the varifolds $\sigma_{{T_0}+t}$ (recall Lemma \ref{ap3}) which does not follow directly from Lemma \ref{equi.max} because $\sigma_{T_0+t}$ might not be smooth. We remark that the right-hand side of \eqref{equiv.flow} is the geodesic curvature with respect to the metric $h=(x_1^2+y_1^2)(dx_1^2+dy_1^2)$.  Because $(M_t)_{t\geq 1}$ is a Brakke flow, it is not hard to deduce from Lemma \ref{ap3} that $(\sigma_t)_{t\geq 1}$ is also a Brakke flow with respect to the metric $h$. This metric is singular at the origin and has unbounded curvature but fortunately, due to Claim 2 and Lemma \ref{small-ball},  we already know that $\sigma_t$ is smooth in a neighborhood of the origin and outside a compact set for all $t\leq {T_0}+\delta$. Thus,  the Inclusion Theorem proven in \cite[10.7 Inclusion Theorem]{ilmanen1} adapts straightforwardly  to our setting and this implies $\sigma_{{T_0}+t} \subseteq A_i(t)$ for all $t\leq \delta$.

Combining \eqref{fat} with \eqref{inclusion} we obtain that $\gamma_t=\sigma_{{T_0}+t}$ all $0<t<\delta$.

 \end{proof}

From Lemma  \eqref{fatlemma} we obtain that $M_t$ is smooth, embedded, and satisfies \eqref{equiv.surface}, \eqref{equiv.flow} for all $T_0<t<T_0+\delta$. Finally, from the fact that $M_t$ is embedded, it follows in a straightforward manner from White's Regularity Theorem that $L^i_t$ converges in $C_{loc}^{2,\alpha}$ to $M_t$. This completes the proof of Theorem \ref{sing.flow}

\section{Main Theorem}\label{ultima}

\begin{thm}\label{main}
For any embedded closed Lagrangian surface $\Sigma$ in $M$, there is $L$ Lagrangian in the same Hamiltonian isotopy class so that the Lagrangian mean curvature flow with initial condition $L$ develops a finite time singularity.
\end{thm}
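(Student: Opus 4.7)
The plan is to assemble the three preceding steps, choosing parameters in the correct order.

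First, fix $p\in\Sigma$ and, for $R$ large, take Darboux coordinates on $B_{4\overline R}$ around $p$ in the rescaled metric $g_R=R^2g$, ensuring that the ambient hypotheses of Section~\ref{setup.geral} are met. For $4\underline R\leq\overline R$ and $\varepsilon$ small, replace $\Sigma\cap B_{2\overline R}$ by $N(\varepsilon,\underline R)\cap B_{2\overline R}$ to produce $L$; this is of the shape \eqref{defi2.geral}. By Polterovich's theorem \cite[Theorem~1.1.A]{yasha}, $L$ is Hamiltonian isotopic to $\Sigma$, as noted in the sketch of proof. The bulk hypotheses of Section~\ref{hyp.lagrangian} on $L$ (area ratio bound, almost-calibrated angle bound, quadratic growth of the primitive $\beta$ on $Q^1$) are immediate from the explicit description of $\gamma(\varepsilon,\underline R)$, since outside a bounded set $Q^1$ is a union of two planes through the origin.

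The core argument is by contradiction: assume the flow $(L_t)_{t\geq 0}$ starting at $L$ exists smoothly for all time. The three main theorems must be applied with their parameters chosen in reverse of the order of application. First, extract the constants $\eta_0$ and $R_5$ from Theorem~\ref{final}, applied to the rotationally symmetric Lagrangian $M_1$ generated by a curve $\sigma$ obtained by concatenating the Anciaux self-expander curve $\chi$ of \eqref{Q} with the ray $c_3$; one needs to check here that this $\sigma$ satisfies the four bullets of Section~\ref{setup.sta}, which amounts to verifying the cone containment \eqref{equiv.cone} (guaranteed by $\pi/2<\theta_2<\theta_3<\pi$) and that $\sigma$ has a single self-intersection enclosing a definite area $A_1$. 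Next, choose $\nu$ small and $S_0$ large enough that any configuration whose $Q_1$-piece is $\nu$-close in $C^{2,\alpha}(B_{S_0})$ to $\mathcal S$, whose $Q_2$-piece is $\nu$-close in $C^{2,\alpha}(B_{R_1})$ to $P_3$, and which is a $\nu$-perturbation of the planar configuration on the annulus $A(R_1,\overline R)$, is automatically $\eta_0$-close in $C^{2,\alpha}(B_{R_5})$ to $M_1$. Finally, choose $\varepsilon$ small and $\underline R$, $\overline R$ large so that Theorem~\ref{connected} (with this $\nu$), Theorem~\ref{annulus.estimates}, and Theorem~\ref{proximity.final} (with these $\nu$ and $S_0$) all apply simultaneously.

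With these choices, Theorem~\ref{connected} produces the decomposition $L_t\cap B_{\underline R}=Q_{1,t}\cup Q_{2,t}$ for $0\leq t\leq 2$, the $\nu$-closeness of $L_t$ to $L$ on $A(R_1,\overline R)$, and the $\nu$-closeness of $Q_{2,t}$ to $P_3$ on $B_{R_1}$ for $1\leq t\leq 2$; the uniform curvature bound $\Lambda_0/\sqrt t$ from part~i) also provides the normal deformation required by hypothesis $(\star)$ of Theorem~\ref{final}. Since the flow is assumed smooth on $[0,2]$, Theorem~\ref{proximity.final} applies to $(Q_{1,t})_{0\leq t\leq 2}$ and forces $Q_{1,1}$ to be $\nu$-close in $C^{2,\alpha}(B_{S_0})$ to $\mathcal S$. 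Assembling these three pieces of $C^{2,\alpha}$ control via the choice of $\nu$ and $S_0$, $L_1$ becomes $\eta_0$-close in $C^{2,\alpha}(B_{R_5})$ to $M_1$. Theorem~\ref{final} then forces $(L_t)$ to develop a finite time singularity before $T_1=2A_1/\pi+1$, contradicting the assumption of smooth long-time existence.

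The main obstacle is not any single estimate but the organisational task of verifying that the model curve $\sigma$ built from Anciaux's self-expander and $c_3$ really does satisfy all the hypotheses of Section~\ref{setup.sta}, together with the bookkeeping of the nested dependency $(\eta_0,R_5)\to(\nu,S_0)\to(\varepsilon,\underline R,\overline R)\to R$ so that the three steps chain together without circularity.
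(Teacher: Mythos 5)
Your proposal follows the same high-level architecture as the paper (First Step via Theorem \ref{connected} and Theorem \ref{annulus.estimates}, Second Step via Theorem \ref{proximity.final}, Third Step via Theorem \ref{final}) but diverges at a crucial structural point: you prescribe the model curve $\sigma$ \emph{a priori} as a ``concatenation'' of $\chi$ with $c_3$, and apply Theorem \ref{final} directly to the single flow $(L_t)$. The paper instead, having fixed $\varepsilon$ and $\underline R$ so that (A), (B), (C) and $(\star\star)$ hold, lets $\overline R\to\infty$ to produce a sequence of flows $(L^i_t)$ and proves Lemma \ref{guns}: in the limit, $L^i_1$ converges in $C^{2,\alpha}_{\mathrm{loc}}$ to an $SO(2)$-invariant Lagrangian $M_1$ generated by a curve $\sigma$ satisfying \emph{all} of the hypotheses of Section \ref{setup.sta}. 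Only then is Theorem \ref{final} invoked. You do not mention Lemma \ref{guns} or the $\overline R\to\infty$ limit at all, which is the heart of the paper's Third Step.

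This matters, and it is a genuine gap, for the following reason. Your $\sigma$ must determine $\eta_0$, $R_5$, and $T_1=2A_1/\pi+1$ through Theorem \ref{final} \emph{before} $\underline R$ and $\overline R$ are chosen. But the ``concatenation of $\chi$ with $c_3$'' only specifies the behaviour of $\sigma$ near the origin; it does not fix the global loop --- where the two branches close up --- and hence does not fix $A_1$ or $T_1$. Meanwhile the actual $L_1\cap A(R_1,\overline R)$ is $\nu_0$-close to $L=N(\varepsilon,\underline R)$, whose loop sits at scale $\sim\underline R$, and $\underline R$ must be taken large (larger than $R_1$, $R_3$, $R_4$ from Theorems \ref{connected}, \ref{proximity.final}, which grow as $\nu_0\to 0$). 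For your $L_1$ to be $\eta_0$-close to your prescribed $M_1$ on $B_{R_5}$, the loop of $\sigma$ must sit at a scale compatible with both $\underline R$ and $R_5$; but $R_5$ comes from the parabolic analysis on $[1,T_1]$ and so grows with $T_1\sim A_1\sim(\text{loop scale})^2$, while $\nu_0\leq\eta_0(\sigma,K_1)$ shrinks as $T_1$ grows, which pushes $\underline R$ up again. This is precisely the circularity you flag at the end as an ``organisational task,'' but leaving it unresolved is not a bookkeeping detail --- it is exactly what Lemma \ref{guns} is designed to dissolve. By letting $\overline R\to\infty$ first (which also makes the ambient metric Euclidean, so the error terms $E$, $E_2$ in the $\mu$-evolution vanish and one can actually prove $M_1\subset\mu^{-1}(0)$), the paper obtains $\sigma$ as the literal limit curve, with the loop at the right scale by construction, and the closeness of $L^i_1$ to $M_1$ on $B_{R_5}$ is then automatic for $i$ large. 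To repair your argument you would need either to reproduce Lemma \ref{guns} (and hence the limiting step) or to carry out the quantitative dependency analysis you postpone, and it is not at all clear the latter closes without a limit.
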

\begin{proof}

{\bf Setup:}    Given $\overline R$  large we can find a  metric $g_R=R^2g$ (see Section \ref{prelim}) so that the hypothesis on ambient space described in Section \ref{ambient.space} are satisfied. Pick $p\in \Sigma$ and assume the Darboux  chart $\phi$ sends the origin into $p\in \Sigma$ and  $T_p\Sigma$ coincides with the real plane  $\R\oplus i\R\subseteq \C^2$ oriented positively.

 We can assume  $\Sigma\cap B_{4\overline R}$  is given by  the graph of the gradient of  some function defined over  the real plane, where the $C^2$ norm can be made arbitrarily small. It is simple to find $\overline \Sigma$  Hamiltonian isotopic to $\Sigma$ which coincides with the real plane in $B_{3\overline R}$. Denote by $L$ the Lagrangian which is obtained by replacing $\overline \Sigma\cap B_{3\overline R}$ with $N(\varepsilon,\underline R)$ defined in \eqref{basic.block.eq}. Using \cite[Theorem 1.1.A]{yasha}  we obtain at once that $L$ is Hamiltonian isotopic to $\overline \Sigma$ and hence to $\Sigma$ as well. Moreover, there is $K_0$ depending only on $\Sigma$  so that the hypothesis on $L$ described  in Section \ref{hyp.lagrangian} are  satisfied for all $\overline R$ large.
 
 We recall once more that $L$ depends on $\varepsilon, \underline R$, $\overline R$, and that $\overline R\geq 4\underline R$.  Assume the Lagrangian mean curvature flow $(L_t)_{t\geq 0}$ with initial condition $L$ exists smoothly for all time.
 
 \vskip 0.05in
{\bf First Step:}  Pick $\nu_0$ small (to be fixed later) and  choose  $\varepsilon,$ $\underline R$, and $\overline R$  so that Theorem \ref{connected} (with $\nu=\nu_0$) and Theorem \ref{annulus.estimates} hold. Thus, there is $R_1=R_1(\nu_0,K_0)$ so that, see Theorem \ref{connected} ii),
\begin{itemize}
\item[(A)] for every $1\leq t\leq 2$, $L_t\cap A(R_1, \overline R)$ is $\nu_0$-close in $C^{2,\alpha}$ to $L$.
\end{itemize}
Moreover, from Theorem \ref{connected}  iii) and iv), $L_t\cap B_{R_1}$ is contained in two connected components $Q_{1,t}\cup Q_{2,t}$ where
\begin{itemize}
\item[(B)] for every $1\leq t\leq 2$,  $Q_{2,t}$ is $\nu_0$-close in $C^{2,\alpha}(B_{R_1})$ to $P_3$.
\end{itemize}
 \vskip 0.05in
{\bf Second  Step:} We need to control $Q_{1,t}$. Apply Theorem \ref{proximity.final} with $S_0=R_1$ and $\nu=\nu_0$. Thus for all $\varepsilon$ small and $\underline R$ large we have that
\begin{itemize}
\item[(C)] for every $1\leq t\leq 2$,  $t^{-1/2}Q_{1,t}$ is $\nu_0$-close in $C^{2,\alpha}(B_{R_1})$ to $\mathcal{S}$,
\end{itemize}
where $\mathcal{S}$ is the self-expander defined in \eqref{Q} (see Figure \ref{modelo-se}).  
One immediate consequence of (A), (B), and (C) is the existence of $K_1$ so that for all $\varepsilon$ small and $\underline R$ large we have
\begin{itemize}
\item[($\star\star$)]the existence of a disc  $D$, and $F_t: D\longrightarrow \C^2$ a normal deformation  defined for all $1\leq t\leq 2$, so that  $$L_t\cap B_{\overline R/2}\subset F_t(D)\subset L_t\cap B_{\overline R}$$ and the $C^{2,\alpha}$ norm of $F_t$ is bounded by $K_1$.
\end{itemize}
 \vskip 0.05in
{\bf Third  Step:} Fix $\varepsilon$ and $\underline R$ in the definition of $L$ so that (A), (B), (C), and $(\star\star)$ hold, but let $\overline R$ tend to infinity. We then obtain a sequence of smooth flows   $(L^i_t)_{t\geq 0}$,  where $L^i_0$ converges strongly to $N(\varepsilon,\underline R)$ defined in \eqref{basic.block.eq} (see Figure \ref{modelo}). 
\begin{lemm}\label{guns}
 If $\nu_0$ is chosen small enough, there is a curve $\sigma\subset \C$ with all the properties described in Section \ref{setup.sta} (see Figure \ref{modelo-3}) and such that $L^i_1$ tends in $C^{2,\alpha}_{loc}$ to
 \begin{equation}\label{nunca}
 M_1=\{(\sigma(s)\cos \alpha, \sigma(s)\sin \alpha)\,|\, \alpha \in S^1, s\in [0,+\infty)\}.
 \end{equation}
\end{lemm}
Assuming this lemma  we will show that $(L^i_t)_{t\geq 0}$ must have a singularity for all $i$ sufficiently large which finishes the proof of Theorem \ref{main}. 
Indeed, because the flow $(L^i_t)_{t\geq 0}$ has property $(\star\star)$, we have at once that condition $(\star)$ of Section \ref{setup.sta} is satisfied. Hence Lemma \ref{guns} implies that $L^i_1$ satisfies the hypothesis of Theorem \ref{final}  for all $i$ sufficiently large and thus Theorem \ref{final} implies that  $(L^i_t)_{t\geq 0}$ must have  a finite time singularity.

\begin{proof}[Proof of Lemma \ref{guns}]
From condition $(\star\star)$  we have that $L^i_1$ converges in $C^{2,\alpha}_{loc}$ to a smooth Lagrangian $M_1$ diffeomorphic to $\R^2$. Moreover, from (B) and (C) we see that we can choose $\nu_0$ small so that $M_1$ is embedded in a small neighborhood the origin.  We argue that $M_{1}\subset \mu^{-1}(0)$, where the function $\mu=x_1y_2-y_1x_2$ was defined in Theorem \ref{annulus.estimates} v). 
From  Lemma \ref{huisken1} and Theorem  \ref{annulus.estimates} v) we have
\begin{multline}\label{bird2.equiv}
\int_{L^i_1}\mu^2\Phi(0,1)d\H^2+\int_0^1\int_{L^i_t}|\nabla \mu|^2\Phi(0,2-t)d\H^2dt\leq \\
\int_{L^i_0}\mu^2\Phi(0,2)d\H^2+\int_0^1\int_{L^i_t}\left(\frac{|E|^2}{4}\mu^2+E_2\right)\Phi(0,2-t)d\H^2dt.
\end{multline}
The terms $E, E_2$ converge uniformly to zero when $i$ goes to infinity because the ambient metric converges to the Euclidean one. Moreover $N(\varepsilon,\underline R)\subset \mu^{-1}(0)$ and so we obtain from \eqref{bird2.equiv} that
\begin{multline*}
\int_{M_1}\mu^2\Phi(0,1)d\H^2=\lim_{i\to\infty}  \int_{L^i_1}\mu^2\Phi(0,1)d\H^2\\
\leq \lim_{i\to\infty}\int_{L^i_0}\mu^2\Phi(0,2)d\H^2+\int_0^1\int_{L^i_t}\left(\frac{|E|^2}{4}\mu^2+E_2\right)\Phi(0,2-t)d\H^2dt\\
=\int_{N(\varepsilon,\underline R)}\mu^2\Phi(0,2)d\H^2=0.
\end{multline*}
Hence,   $M_{1}\subset \mu^{-1}(0)$ and we can apply Lemma \ref{ap3} to conclude the existence of a curve $\sigma$ so that \eqref{nunca} holds.

In order to check that $\sigma$ has the properties described in Section \ref{setup.sta} it suffices to see that $\sigma$ has a single self-intersection and is contained in the cone $C_a$ (defined in \eqref{equiv.cone}) because the remaining properties follow  from $M_1$ being diffeomorphic to $\R^2$, embedded near the origin, and asymptotic to the plane $P_1$ ( Lemma \ref{large}).

Recall that $\gamma(\varepsilon,\underline R)$, $\chi,$ and  $c_3$ are the curves in $\C$ which define, respectively, the Lagrangian $N(\varepsilon,\underline R)$, the self-expander $\mathcal{S}$, and the plane $P_3$.  Now $M_1$, being the limit of $L^i_1$, also satisfies (A), (B), and (C). Hence we know that $\sigma$ is $\nu_0$-close in $C^{2,\alpha}$ to $\gamma(\varepsilon,\underline R)$ in $\C\setminus B_{R_1}$  and that $\sigma\cap B_{R_1}$ has two connected components, one  $\nu_0$-close in $C^{2,\alpha}(B_{R_1})$  to $c_3$ and the other  $\nu_0$-close in $C^{2,\alpha}(B_{R_1})$  to $\chi$. It is simple to see that if $\nu_0$ is small then indeed all the desired properties for $\sigma$ follow.
\end{proof}

 \end{proof}

\section{Appendix}\label{appendix}

\subsection{Lagrangians with symmetries}
Recall that $\mu(x_1,y_1,x_2,y_2)=x_1y_2-x_2y_1$ and consider two distinct conditions on $M$. 
\begin{itemize}
\item[C1)] $M$ is an integral Lagrangian varifold which is a smooth embedded surface in a neighborhood of the origin;
\item[C2)] There is a smooth Lagrangian immersion $F:\R^2 \longrightarrow \C^2$ so that $M=F(\R^2)$ and $M$ is a smooth embedded surface in a neighborhood of the origin.
\end{itemize}

\begin{lemm}\label{ap3} Assume  $M\subseteq \mu^{-1}(0)$.

If C1) holds then there is a one-dimensional integral varifold $\gamma\subset \C$ so that for every function $\phi$ with compact support
\begin{equation}\label{vari.ap}
\int_M \phi d\H^2=\int_{\gamma} |z|\int_0^{2\pi}\phi(z\cos\alpha, z\sin\alpha)d\alpha d\H^1.
\end{equation}

If C2) holds then 
there is a smooth immersed curve $\gamma:[0,\infty)\longrightarrow \C$ with $\gamma^{-1}(0)=0$, and 
\begin{equation}\label{smooth.ap}M=\{(\gamma(s)\cos \alpha,\gamma(s)\sin \alpha)\,|\, s\in [0,+\infty), \theta \in S^1\}.
\end{equation}

In both cases  the curve (or varifold) $\gamma\cup-\gamma$ is smooth near the origin.
\end{lemm}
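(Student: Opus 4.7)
My plan is to recognize $\mu = x_1 y_2 - x_2 y_1$ as the moment map of the diagonal $SO(2)$-action $R_\alpha(z_1, z_2) = (\cos\alpha\, z_1 - \sin\alpha\, z_2,\, \sin\alpha\, z_1 + \cos\alpha\, z_2)$ on $\C^2$, deduce rotational invariance of $M$ from the Lagrangian-in-moment-map-level-set argument, and then pass to the quotient to extract the curve.

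A direct computation shows that the infinitesimal generator of the action, $X = -x_2\partial_{x_1} - y_2\partial_{y_1} + x_1\partial_{x_2} + y_1\partial_{y_2}$, satisfies $\iota_X\omega = d\mu$. At any smooth point $p \in M \setminus \{0\}$ (or an approximate-tangent point in C1), $T_p M \subseteq \ker d\mu\vert_p = X_p^{\omega}$, and since $M$ is Lagrangian, $T_p M = (T_p M)^{\omega} \ni X_p$; hence $X$ is tangent to $M$ off the origin. Under C2 this shows $M \setminus \{0\}$ is $SO(2)$-invariant set-wise; under C1 the standard fact that a rectifiable varifold whose approximate tangents contain a given vector field a.e.\ is invariant under that flow gives $SO(2)$-invariance at the varifold level.

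Next, I parametrize $\mu^{-1}(0)$ by $\Pi\colon \C \times S^1 \to \C^2$, $\Pi(\gamma, \alpha) = (\gamma\cos\alpha, \gamma\sin\alpha)$, which off the origin is a double cover under $(\gamma, \alpha) \sim (-\gamma, \alpha + \pi)$, with $SO(2)$ acting by translation in $\alpha$. Any $SO(2)$-invariant subset of $\mu^{-1}(0) \setminus \{0\}$ is therefore the $\Pi$-image of $\tilde\gamma \times S^1$ for some $\tilde\gamma \subseteq \C \setminus \{0\}$ satisfying $\tilde\gamma = -\tilde\gamma$. In C2, simply-connectedness of $\R^2$ lets me lift the immersion $F$ through $\Pi$ away from the origin, and quotienting out the $S^1$-factor extracts the smooth immersed curve $\gamma\colon (0, \infty) \to \C$ of \eqref{smooth.ap}. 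In C1, $SO(2)$-invariance of the varifold together with the fact that the Jacobian of $\Pi$ at $(\gamma,\alpha)$ equals $|\gamma|$ (the radius of the $SO(2)$-orbit through $\Pi(\gamma,\alpha)$) allow a disintegration along orbits and yield the integral representation \eqref{vari.ap} with $\gamma$ a one-dimensional integral varifold on $\C$.

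It remains to extend $\gamma$ smoothly through the origin and to verify that $\gamma \cup -\gamma$ is smooth there. Since $M$ is smooth embedded near $0$ and $SO(2)$-invariant off the origin, continuity forces $T_0 M$ to be an $SO(2)$-invariant Lagrangian $2$-plane in $\mu^{-1}(0)$; a short linear-algebra classification shows these are precisely the planes $L_v = \{(v u_1, v u_2) : (u_1, u_2) \in \R^2\}$ for some $v \in \C \setminus \{0\}$, which is the $\Pi$-image of $\R \cdot v \times S^1$. Writing $M$ near $0$ as an $SO(2)$-equivariant normal graph over $L_v$ reduces the local data to a single smooth curve through $0 \in \C$ with tangent direction $\R \cdot v$, and this is precisely $\gamma \cup -\gamma$, consistent with the curve built in the previous step. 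The main obstacle I expect is matching the $\Z_2$-branch from the double cover $\Pi$ across the origin; embeddedness of $M$ (rather than mere immersion) near $0$ is what pins down a unique sheet of $\Pi$ through each nearby point and hence fixes the sign of $\gamma$ on both sides of $s = 0$, giving smoothness of $\gamma \cup -\gamma$.
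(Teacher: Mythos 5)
Your proposal follows essentially the same route as the paper: both hinge on the observation that $X=-JD\mu$ is tangent to $M$ because $M$ is Lagrangian inside the level set $\mu^{-1}(0)$, deduce $SO(2)$-invariance of $M$, and then disintegrate the measure along $SO(2)$-orbits to extract the generating curve. The moment-map language you use is an equivalent repackaging of the paper's computation $\iota_X\omega=d\mu$, and your orbit-disintegration with Jacobian $|\gamma|$ is the same computation the paper carries out via the co-area formula with $f_i=\arctan(x_2/x_1)$ (using $|\nabla f_i|=|x|^{-1}$ on $M$). One point you wave through that the paper nails down: the ``standard fact'' that a rectifiable varifold with a given tangent vector field a.e.\ is flow-invariant is not automatic at the measure level; the paper makes it precise via the first variation formula, using both that $X$ is tangent (so the mean-curvature pairing with $X$ vanishes) \emph{and} the explicit computation $\mathrm{div}_P X=0$ on every Lagrangian plane $P$. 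If you wanted a self-contained proof you would need to supply that divergence-free input, which is the mechanism that upgrades set-theoretic tangency to invariance of the Radon measure.
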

\begin{proof}
Consider the vector field
$$ X=-JD\mu=(-x_2,-y_2,x_1,y_1).$$
A simple computation shows that for any Lagrangian plane $P$ with orthonormal basis $\{e_1,e_2\}$ we have
\begin{equation}\label{div.ap}
\mbox {div}_P X=\sum_{i=1}^{2}\langle D_{e_i}X,e_i\rangle=0.
\end{equation}
Finally, consider $(F_{\alpha})_{\alpha \in S^1}$  to be the one parameter family of diffeomorphisms  in $SU(2)$ such that
$$\frac{dF_{\alpha}}{d\alpha}(x)=X(F_{\alpha}(x))
 $$
Consider the functions $f_1(x)=\arctan(x_2/x_1)$ and $f_2(x)=\arctan(y_2/y_1)$ which are defined, respectively, in $U_1=\{x_1\neq 0\}$,  $U_2=\{y_1\neq 0\}$.
Assume that C1) holds. We now make several remarks which will be important when one applies the co-area formula.

First,  $F_{\alpha}(M)=M$, i.e.,
$$\int_{M}\phi\circ F_{\alpha}d\H^2=\int_M \phi d\H^2\mbox{ for all }\phi\mbox{ with compact support.}
$$
Because $M\subseteq \mu^{-1}(0)$ and $M$ is Lagrangian we have that $X$ is a tangent vector to $M$. Hence
$$
\frac{d}{d\alpha}\int_{M}\phi\circ F_{\alpha}d\H^2=\int_M\langle D(\phi\circ F_{\alpha}), X\rangle d\H^2\\
=-\int_{M}(\phi\circ F_{\alpha}) \mbox{div}_M X d\H^2=0,
$$
where the last identity follows from \eqref{div.ap}.

Second, on $M$ we have $|\nabla f_i|(x)=|x|^{-1}$. Indeed, for every $x\in U_i$ with $\mu(x)=0$ it is a simple computation to see that $Df_i\in \mbox{span}\{X(x),JX(x)\}$ and thus, because $X$ is a tangent vector,
$$|\nabla f_i|(x)= |\langle Df_i(x), X(x)\rangle||x|^{-1}=|x|^{-1}.$$

Third, for almost all $\alpha$ and $i=1,2$,  $f_i^{-1}(\alpha)\cap M$ is a one-dimensional varifold. Moreover,  a simple computation shows $f_i\circ F_{\alpha}(x)=\alpha+f_i(x)$ for all $x\in U_i$ and all $\alpha\in(-\pi/2,\pi/2)$ and thus $$F_{\alpha}(f_i^{-1}(0)\cap M)=f_i^{-1}(\alpha)\cap F_{\alpha}(M)=f_i^{-1}(\alpha)\cap M.$$

Fourth, the fact that $M\subseteq \mu^{-1}(0)$ implies that $f_i^{-1}(0)\cap M$ has support contained in $\{x_2=y_2=0\}=\C$. Moreover, $f_1=f_2$ on $M$ and so  we set $$\Gamma=f_1^{-1}(0)\cap M=f_2^{-1}(0)\cap M.$$

Fifth, one can check that $F_{\pi}$ coincides with the antipodal map $A$. Thus $$A(\Gamma)=A(f_i^{-1}(0))\cap A(M)=f_i^{-1}(0)\cap M=\Gamma.$$
As a result, there is a one-dimensional varifold $\gamma$ such that $\Gamma=A(\gamma)+\gamma$ (the choice of $\gamma$ is not unique).

Finally we can apply the co-area formula and obtain for every $\phi$ with compact support in $U_i$
\begin{align*}
\int_M \phi d\H^2 & = \int_{-\pi/2}^{\pi/2}\int_{f_i^{-1}(\alpha)\cap M}\frac{\phi}{|\nabla f_i|}d\H^1d\alpha=\int_{-\pi/2}^{\pi/2}\int_{f_i^{-1}(\alpha)\cap M}|x|{\phi}d\H^1d\alpha\\
& = \int_{-\pi/2}^{\pi/2}\int_{f_i^{-1}(0)\cap M}|F_{\alpha}(x)|{(\phi\circ F_{\alpha})}d\H^1d\alpha\\
& = \int_{\Gamma}|x|\int_{-\pi/2}^{\pi/2}{\phi(F_{\alpha}(x))}d\alpha d\H^1\\
&= \int_{\Gamma} |z|\int_{-\pi/2}^{\pi/2}\phi(z\cos\alpha, z\sin\alpha)d\alpha d\H^1\\
&=\int_{\gamma} |z|\int_0^{2\pi}\phi(z\cos\alpha, z\sin\alpha)d\alpha d\H^1.
\end{align*}
This proves \eqref{vari.ap} for functions with support contained in $U_i$. Because $M$ is smooth and embedded near the origin it is straightforward to extend that formula to all functions with compact support.

Assume that C2) holds. From what we have done it is straightforward to obtain the existence of a curve $\gamma:I\longrightarrow \C$, where $I$ is a union of intervals, so that \eqref{smooth.ap} holds.
The fact that $M$ is diffeomorphic to $\R^2$ implies that $\gamma$ is connected and that $\gamma^{-1}(0)$ must be nonempty. The condition that $M$ is embedded when restricted to a small neighborhood of the origin implies that $\gamma^{-1}(0)$ must have only one element which we set to be zero. Finally, the fact that the map $F$ is an immersion is equivalent to the  curve  $\gamma\cup-\gamma$ being smooth at the origin.
\end{proof}

\subsection{Regularity for equivariant flow}\label{reg.eq} 

Angenent in \cite{angenent0} and \cite{angenent} developed the regularity theory for a large class of  parabolic  flows of curves in surfaces. We collect  the necessary  results, along with an improvement done in \cite{oaks}, which will be used in our setting. 

Let $\gamma_t:[0,a]\longrightarrow \C$, $0\leq t<T,$ be a one parameter family of smooth curves so that
\begin{itemize}
\item[A1)]There is $r>0$  and $p\in \C$ so that for all $0\leq t<T$, $\gamma_t(0)=0$, $\gamma_t(a)\in B_{r}(p)$, $\gamma_t$ has no self-intersections in $B_{2r}(0)\cup B_{2r}(p)$, and the curvature of $\gamma_t$ along with $\frac{x^{\bot}}{|x|^2}$ and all its derivatives are bounded (independently of $t$) in $B_{2r}(0)\cup B_{2r}(p)$.
\item[A2)] Away from the origin and for all $0\leq t<T$, the curves $\gamma_t$ solve the equation 
$$\frac{dx}{dt}=\vec k-\frac{x^{\bot}}{|x|^2}.$$
\end{itemize}
A simple modification  of \cite[Theorem 1.3]{angenent} implies that, for $t>0$, the self-intersections of $\gamma_t$ are finite and non increasing with time.

\begin{thm}\label{angenent}
There is a continuous curve $\gamma_T$ and a finite number of points $\{Q_1,...,Q_m\}\subseteq \C\setminus B_{2r}(0)\cup B_{2r}(p)$ such that $\gamma_T\setminus\{Q_1,...,Q_m\}$ consists of smooth arcs and away from the singular points the curves $\gamma_t$ converge smoothly to $\gamma_T$.  Any two smooth arcs intersect only in finitely many points.

For each of the singular points $Q_i$ and for each small $\varepsilon$,   the number of self-intersections of $\gamma_T$ in $B_{\varepsilon}(Q_i)$ is strictly less than the number of self-intersections of   $\gamma_{t_j}$ in $B_{\varepsilon}(Q_i)$ for some sequence $(t_j)_{j\in \N}$ converging to $T$.
 \end{thm}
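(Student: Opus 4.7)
The plan is to reinterpret the flow in A2) as geodesic curvature flow on a Riemannian surface and then invoke Angenent's general regularity theory \cite{angenent0,angenent}. As was already noted in the proof of Lemma \ref{fatlemma}, the vector field $\vec k - \frac{x^{\bot}}{|x|^2}$ coincides with the geodesic curvature vector of $\gamma_t$ with respect to the conformal metric $h = (x_1^2+y_1^2)(dx_1^2+dy_1^2)$ on $\C\setminus\{0\}$. Thus A2) is nothing but pure curve-shortening flow in the smooth Riemannian surface $(\C\setminus\{0\}, h)$, which is precisely the setting handled in \cite{angenent0,angenent}. Hypothesis A1) confines all singular behavior to a compact region away from the boundary: parabolic bootstrapping inside $B_{2r}(0)\cup B_{2r}(p)$, using the uniform bounds on curvature and its derivatives assumed in A1), yields a smooth limit of $\gamma_t$ in those neighborhoods as $t\to T$, free of self-intersections and endpoints fixed at $0$ and near $p$. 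Any singular point $Q_i$ must therefore lie in $\C\setminus(B_{2r}(0)\cup B_{2r}(p))$.

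I would then apply Angenent's main theorem (in the form of \cite[Theorem 1.3]{angenent}, together with the refinement of \cite{oaks}) to the compact portion of the flow: this produces the continuous limit curve $\gamma_T$ and the finite collection $\{Q_1,\ldots,Q_m\}$ of singular points, characterized as those points at which the curvature of $\gamma_t$ blows up as $t\to T$. The fact that away from the $Q_i$ the convergence $\gamma_t\to \gamma_T$ is smooth is then a standard consequence of interior parabolic estimates. For the assertion that any two distinct smooth arcs of $\gamma_T$ meet in only finitely many points, I would use real-analyticity of solutions to the curve-shortening flow combined with Angenent's Sturmian intersection-counting theorem: locally, the difference of two graphical representations of the arcs satisfies a linear parabolic equation, and its zero set is finite and non-increasing in time.

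The most delicate point is the final statement, which asserts a strict drop in self-intersection count at each singular point. I would argue by contradiction: suppose that for some $Q_i$ and some small $\varepsilon>0$, the number of self-intersections of $\gamma_t$ inside $B_\varepsilon(Q_i)$ is constant for all $t$ sufficiently close to $T$. Choose $\varepsilon$ so that $\gamma_t$ crosses $\partial B_\varepsilon(Q_i)$ transversally and decompose $\gamma_t\cap B_\varepsilon(Q_i)$ into finitely many smooth branches with a uniformly bounded number of transverse mutual crossings. Applying the Sturmian comparison theorem of \cite{angenent} to each pair of branches yields uniform upper bounds on their crossing numbers, which together with standard $\varepsilon$-regularity for curve-shortening on surfaces upgrades to uniform curvature estimates in a neighborhood of $Q_i$ up to time $T$. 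This contradicts the choice of $Q_i$ as a singular point, and hence forces a strict decrease in self-intersection count, completing the theorem.
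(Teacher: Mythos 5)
Your overall framework—use A1) to localize singularities away from the boundary, reinterpret the flow via Angenent's theory, use analyticity/Sturmian ideas for the finiteness of intersections, and then establish the drop in self-intersection count—resembles the paper's strategy, but there are two real gaps.

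\textbf{The strict-drop argument is wrong.} You argue by contradiction that if the self-intersection count in $B_\varepsilon(Q_i)$ is eventually constant, then Sturmian bounds on branch crossings plus an ``$\varepsilon$-regularity'' step would upgrade to uniform curvature bounds near $Q_i$, contradicting that $Q_i$ is singular. This implication does not hold: a loop can contract to a point while the number of transverse self-crossings in $B_\varepsilon(Q_i)$ stays constantly equal to one for all $t<T$, with curvature blowing up nonetheless. Bounded crossing count does not control Gaussian density or curvature, and there is no $\varepsilon$-regularity theorem that would bridge this gap. What the paper actually uses is the content of Oaks' Theorem 6.1 directly: at each singular point, for each small $\varepsilon$, there is a sequence $t_j\to T$ along which $\gamma_{t_j}$ has a self-intersection in $B_\varepsilon(Q_i)$, and moreover either a closed loop contracts as $t_j\to T$, or two distinct smooth arcs of $\gamma_T$ coincide on an open set near $Q_i$. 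The second alternative is excluded by the real-analyticity of the limit curve (the deformation vector is real-analytic on $\C\setminus B_{2r}(0)$, so, following the argument of Angenent's second paper, the smooth part of $\gamma_T$ is real-analytic there; this is also what gives finiteness of intersections of two arcs). The first alternative is exactly the statement that the self-intersection count is strictly higher along $(t_j)$ than at time $T$. Your contradiction does not invoke this dichotomy, and without it the step fails.

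\textbf{You skip the total-curvature bound.} Angenent's structure theorem (the paper cites \cite[Theorem 5.1]{angenent}, not Theorem 1.3, for this step) requires as a hypothesis that $\int_{\gamma_t}|\vec k|\,d\H^1$ is uniformly bounded in $t$. This is not automatic; the paper derives it by modifying \cite[Theorem 4.1]{angenent0}, and the modification is non-trivial precisely because the curves here are non-closed, so integration by parts produces boundary terms that must be controlled using A1). Passing to the Riemannian surface $(\C\setminus\{0\},h)$ does not make this issue disappear—the metric $h$ is singular at the origin and the surface is non-compact—so the compactness/total-curvature input must still be established before Angenent's theory applies.
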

 \begin{proof}
 Condition $A1)$ implies that the curves $\gamma_t$ converge smoothly in $ B_{2r}(0)\cup B_{2r}(p)$ as $t$ tends to $T$.  A slight modification of \cite[Theorem 4.1]{angenent0} shows that the quantity
 $$\int_{\gamma_t}|\vec k|d\H^1$$
 is uniformly bounded. Indeed the only change one has to make concerns the existence of boundary terms when  integration by parts is performed. Fortunately, A1) implies that  the contribution from the boundary terms is uniformly bounded and so all the other arguments in \cite[Theorem 4.1]{angenent0} carry through. 
 
 The fact that the total curvature is uniformly bounded and that,  on $\C\setminus B_{2r}(0)$, the deformation vector $\vec k-\frac{x^{\bot}}{|x|^2}$ satisfies  conditions $(V^*_1),$ $(V_2),$ $(V_3),$ $(V^*_5),$ and $(S)$ of \cite{angenent0}, shows that  we can apply \cite[Theorem 5.1]{angenent} to conclude the existence of a continuous curve $\gamma_T$ and a finite number of points $\{Q_1,...,Q_m\}\subseteq \C\setminus (B_{2r}(0)\cup B_{2r}(p))$ such that $\gamma_T\setminus\{Q_1,...,Q_m\}$ consists of smooth arcs and away from the singular points the curves $\gamma_t$ converge smoothly to $\gamma_T$. We note that  \cite[Theorem 5.1]{angenent}  is applied to close curves but an inspection of the proof shows that all the arguments are local and so they apply with no modifications to $\gamma_t$ provided hypothesis A1)  hold.
 
 Oaks \cite[Theorem 6.1]{oaks} showed that for each of the singular points $Q_i$ and for each small $\varepsilon$,   there is a sequence $(t_j)_{j\in \N}$ converging to $T$ so that $\gamma_{t_j}$ has self-intersections in $B_{\varepsilon}(Q_i)$ and either a closed loop of $\gamma_{t_j}$  in $B_{\varepsilon}(Q_i)$ contracts  as $t_j$ tends to $T$ or else there are two distinct arcs in  the smooth part of $\gamma_T$ which coincide in a neighborhood of $Q_i$ (see \cite[Figure 6.2.]{angenent}). Using the fact that the deformation vector is analytic in its arguments on $\C\setminus B_{2r}(0)$, we can argue as in \cite[page 200--201]{angenent} and conclude that the smooth part of $\gamma_T$ must in fact be real analytic in $\C\setminus B_{2r}(0)$. Therefore, any two smooth arcs intersect only in finitely many points and this excludes the second possibility.
 \end{proof}

 \subsection{Non avoidance principle for equivariant flow}
 
\begin{lemm}\label{equi.max}

For each $j=1,2$ consider smooth curves $\sigma_{j,t}:[-a,a]\longrightarrow \C$ defined for all $0\leq t\leq T$ so that
\begin{itemize}
	\item[i)] $\sigma_{j,t}(-s)=-\sigma_{j,t}(s)$ for all $0\leq t\leq T$ and $s\in [-a,a]$.
	\item[ii)] The curves $\gamma_t$ solve the equation 
$$\frac{dx}{dt}=\vec k-\frac{x^{\bot}}{|x|^2}.$$
     \item[iii)] $\sigma_{1,0}\cap\sigma_{2,0}=\{0\}$ (non-tangential intersection) and $(\partial \sigma_{1,t})\cap \sigma_{2,t}=\sigma_{1,t}\cap (\partial \sigma_{2,t})=\emptyset$ for all $0\leq t\leq T$.
\end{itemize}
For all $0\leq t\leq T$ we have $\sigma_{1,t}\cap\sigma_{2,t}=\{0\}$.
\end{lemm}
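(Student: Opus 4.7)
The plan is to apply Angenent's Sturmian intersection-comparison theorem directly to the equivariant flow (ii). First, evaluating the antipodal symmetry (i) at $s=0$ gives $\sigma_{j,t}(0)=-\sigma_{j,t}(0)=0$ for all $t\in[0,T]$ and $j\in\{1,2\}$, so $0\in\sigma_{1,t}\cap\sigma_{2,t}$ throughout the flow. Moreover any nonzero $p\in\sigma_{1,t}\cap\sigma_{2,t}$ comes paired with its antipode $-p$ in the same intersection (since both curves are antipodally invariant as sets), so the cardinality $I(t):=\#(\sigma_{1,t}\cap\sigma_{2,t})$ is always a positive odd integer.

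Second, I verify that the flow (ii) is a genuine smooth parabolic curve flow across the apparent singularity at $s=0$. The antipodal symmetry forces the Taylor expansion $\sigma_{j,t}(s)=\sigma'_{j,t}(0)\,s+O(s^3)$ near $s=0$, whence a direct computation of the normal component of $x=\sigma_{j,t}(s)$ along its own curve yields $x^\perp=O(s^3)$ while $|x|^2=O(s^2)$, giving $x^\perp/|x|^2=Cs+O(s^3)$ for some vector $C$. Thus the a priori singular drift is bounded and smooth even at $s=0$, and (ii) falls within the class of parabolic curve flows of the form $\partial_t x=\vec k+V$ (with $V$ a smooth ambient vector field) treated by Angenent.

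Third, I invoke Angenent's Sturmian theorem: for two smooth solutions of such a parabolic curve flow with separated moving boundaries (as guaranteed by (iii)), the number of intersection points is finite for $t>0$, non-increasing in $t$, and strictly decreasing at any non-transverse intersection. The transversality of $\sigma_{1,0}\cap\sigma_{2,0}=\{0\}$ in (iii) yields $I(0)=1$, so Sturm gives $I(t)\le 1$ for all $t\in[0,T]$. Combined with $I(t)\ge 1$ from the persistent intersection at $0$, we obtain $I(t)=1$, and hence $\sigma_{1,t}\cap\sigma_{2,t}=\{0\}$, as desired.

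The main technical obstacle is verifying that Angenent's zero-counting principle really applies when the two curves share a forced intersection at $0$ for all time — the standard statement concerns curves meeting only transversally and away from their boundaries. One clean resolution is to work in local graph coordinates near $0$: after rotating so that $\R\sigma'_{1,0}(0)$ is horizontal, both curves can be written locally as $y=f_j(x,t)$ with $f_j(0,t)=0$ (forced by $\sigma_{j,t}(0)=0$) and $\partial_x f_2(0,0)\neq 0$ (by transversality). The difference $u=f_1-f_2$ then satisfies a linear parabolic equation with smooth coefficients (the smoothness at $x=0$ coming from the second step above), and its number of zeros is non-increasing by Sturm's classical theorem, the forced simple zero at $x=0$ matching the persistent intersection at the origin; patching this local analysis with a similar argument at any tentative new intersection away from $0$ gives the full monotonicity of $I(t)$.
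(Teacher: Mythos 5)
Your strategy---reduce to Sturm zero-counting for the graph difference $u=f_1-f_2$ near the origin---runs into a concrete problem: the linearized coefficients are \emph{not} smooth (or even bounded) at $x=0$, so the classical Sturm theorem does not apply as you claim. Your ``second step'' verifies that the drift $-\frac{x^{\bot}}{|x|^2}$ is smooth and $O(s)$ \emph{along each individual odd curve}; this is correct but is a different statement from what you need. In graph coordinates the drift contributes $D(f,f')=\frac{xf'-f}{x^2+f^2}$ to $\partial_t f$, and for odd $f\sim ax$ one finds
$\partial_{f'}D = \frac{x}{x^2+f^2}\sim \frac{1}{(1+a^2)x}$ and $\partial_f D \sim -\frac{1}{(1+a^2)x^2}$, both blowing up at $x=0$. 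Consequently the equation for $u$ reads schematically $\partial_t u = a(x,t)u'' + O(1/x)\,u' + O(1/x^2)\,u + (\text{bounded})$, and the ``smooth coefficients'' premise---and hence the direct Sturm argument and, for the same reason, the invocation of Angenent's Sturmian theory across the singular point---is not justified. Smoothness of the \emph{nonlinear} drift along odd curves does not imply smoothness of its Fr\'echet derivative, which is what controls the linear difference equation.

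The paper's own proof recognizes precisely this obstruction and avoids it. It changes variables to $\alpha_j=f_j/s$ (smooth and \emph{even}, because $f_j$ is odd), which eliminates the $1/s^2$ term; but the PDE for $u=\alpha_1-\alpha_2$ still retains a genuinely singular first-order term $\frac{u'}{s}C_4^2$ that prevents a blind application of zero-counting theory. The paper therefore runs a hands-on maximum-principle argument on $v_t=u_te^{-Ct}+\varepsilon(t-T_1)$: at a first interior minimum $s_0\ne 0$ of $v_t$ one has $u_t'(s_0)=0$ so the singular term drops out, while at $s_0=0$ the ratio $u_t'/s\to u_t''(0)\ge 0$ has the favorable sign. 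Your proposal would need to be reworked along these lines---dividing by $x$ first and then running a tailored maximum principle rather than citing Sturm---to close the gap.
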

\begin{proof}
	Away from the origin, it is simple to see  the maximum principle holds and so two disjoint solutions cannot intersect for the first time away from  the origin. Thus it suffices to focus on what happens around the origin. Without loss of generality we assume that $\sigma_{j,t}(s)=(s,f_{j,t}(s))$ for all $s\in[-\delta,\delta]$ for all $t\leq T_1$. The functions $\alpha_{j,t}(s)= s^{-1}f_{j,t}(s)$ are smooth by i) and so we consider $u_t=\alpha_{1,t}-\alpha_{2,t}$ which, form iii),  we can assume to be  initially positive and $u_t(\delta)=u_t(-\delta)>0$ for all $t\leq T_1$. It is enough to show that $u_t$ is positive for all $t\leq T_1$.
We have at once that
	\begin{multline*}
	\frac{df_{j,t}}{dt}=(\arctan(\alpha_{j,t}))'+\frac{f''_{j,t}}{1+(f'_{j,t})^2}\\
	\implies \frac{d\alpha_{j,t}}{dt}=\frac{\alpha''_{j,t}}{1+(s\alpha'_{j,t}+\alpha_{j,t})^2 }+\frac{\alpha'_{j,t}}{s}\frac{1}{1+\alpha_{j,t}^2}+\frac{\alpha'_{j,t}}{s}\frac{2}{1+(s\alpha'_{j,t}+\alpha_{j,t})^2}
	\end{multline*}
The functions $s^{-1}\alpha'_{j,t}$ are smooth for all $s$ and so we obtain
$$  \frac{du_{t}}{dt}=\frac{u''_{t}}{1+C_1^2}+C_2u_t'+C_3u_t+\frac{u'_{t}}{s}C_4^2,$$
where $C_k$ are smooth time dependent bounded functions for  $k=1,\ldots,4$.

Suppose  $T_1$ is the first time at which $u_t$ becomes zero and consider $v_t=u_te^{-Ct}+\varepsilon(t-T_1)$ with $\varepsilon$ small and  $C$ large. The function $v_t$ becomes zero for a first time $t\leq T_1$ at some point $s_0$ for all small positive $\varepsilon$. At that time we have $u''_t(s_0)\geq 0,$ $u'_t(s_0)=0$, and thus, with an obvious abuse of notation,
$$0\geq \frac{dv_{t}}{dt}(s_0)=\varepsilon +\frac{d}{dt}(u_te^{-Ct})(s_0)\geq\varepsilon + \frac{u'_{t}(s_0)}{s_0}C_4^2e^{-Ct}.$$
If $s_0$ is not zero, the last term on the right is zero. If $s_0$ is zero, then the last term on the right is $u''_t(0)C_4^2e^{-Ct}$ which is nonnegative. In  any case we get a contradiction.
\end{proof}
 
\bibliographystyle{amsbook}

\vspace{20mm}

\end{document}